\newcommand{\ignore}[1]{}
\def\mystrut#1{\rule{0cm}{#1}}
\newtheorem{remark}{Remark}
\newcommand{\norm}[1]{\|#1\|}
\newcommand{\abs}[1]{|#1|}
\def\shiftcholqr{shiftedCholeskyQR}
\newcommand{\uu}{{\bf u}}
\begin{document}

\title{Shifted CholeskyQR
for computing the QR factorization of ill-conditioned matrices }

\author{
Takeshi Fukaya\thanks{
Hokkaido University, Hokkaido, Japan (\email{fukaya@iic.hokudai.ac.jp})}
\and 
Ramaseshan Kannan\thanks{Arup, 3 Piccadilly Place, Manchester M1 3BN, United Kingdom. (\email{Ramaseshan.Kannan@arup.com})} \and
 Yuji Nakatsukasa\thanks{National Institute of Informatics, 2-1-2 Hitotsubashi, Chiyoda-ku, Tokyo 101-8430, Japan (\email{nakatsukasa@nii.ac.jp})} 
\and  Yusaku Yamamoto\thanks{The University of Electro-Communications, Tokyo, Japan / JST CREST, Tokyo, Japan (\email{yusaku.yamamoto@uec.ac.jp})}
\and Yuka Yanagisawa\thanks{Waseda university, Waseda Research Institute for Science and Engineering, Tokyo, Japan (\email{yuuka@aoni.waseda.jp})}
}

\date{}

\headers{Shifted CholeskyQR}{Fukaya, Kannan,  Nakatsukasa, Yamamoto and Yanagisawa}

\maketitle

\begin{abstract}
The Cholesky QR algorithm is an efficient communication-minimizing algorithm for computing the QR factorization of a tall-skinny matrix. 
Unfortunately it has the 
inherent numerical instability and breakdown when the matrix is ill-conditioned. 
A recent work establishes that the instability can be cured 
by repeating the algorithm twice (called CholeskyQR2).
However, the applicability of CholeskyQR2 is still limited by the requirement that the Cholesky factorization of the Gram matrix runs to completion, 
which means it does not always work for matrices $X$ with $\kappa_2(X)\gtrsim {\uu}^{-\frac{1}{2}}$ where ${\uu}$ is the unit roundoff.
In this work we extend the applicability to $\kappa_2(X)=\mathcal{O}(\uu^{-1})$ by introducing a shift to the computed Gram matrix so as to guarantee the Cholesky factorization $R^TR= A^TA+sI$ 
succeeds numerically. We show that the computed $AR^{-1}$ has reduced condition number $\leq {\uu}^{-\frac{1}{2}}$, for which CholeskyQR2 safely computes the QR factorization, yielding a computed $Q$ of orthogonality $\|Q^TQ-I\|_2$ and 
residual $\|A-QR\|_F/\|A\|_F$ both $\mathcal{O}({\uu})$. 
Thus we obtain the required QR factorization by essentially running Cholesky QR thrice. 
We extensively analyze the resulting algorithm \shiftcholqr3\ to reveal its excellent numerical stability.
\shiftcholqr3 is also highly parallelizable, and applicable and effective also when working in an oblique inner product space. We illustrate our findings through experiments, in which we achieve significant (up to x40) speedup over alternative methods.
\end{abstract}

\begin{keywords}
QR factorization, Cholesky QR factorization, oblique inner product, roundoff error analysis, communication-avoiding algorithms, 
\end{keywords}
\begin{AMS}
65F30, 15A23, 65F15, 15A18, 65G50
\end{AMS}

\section{Introduction}

Computing the QR factorization $X=QR$ is required in various applications in scientific computing. 
The Cholesky QR algorithm computes the factorization by:
\begin{eqnarray}
&& A  = X^{\top}X, \label{eq:gram}\\
&&R = \mbox{chol}(A), \label{cholA}\\
&& Q  =  XR^{-1}, 
\end{eqnarray}
where  $\mbox{chol}(A)$ denotes the Cholesky factor of $A$. 
Cholesky QR is a communication-avoiding algorithm
whose communication cost is equivalent to that of the TSQR algorithm, which has been devised specifically to reduce communication for the QR factorization of tall-skinny matrices~\cite{implementqr}. 
Cholesky QR has the advantage
over TSQR that its arithmetic cost is about half and that its
reduction operator is addition, while that of TSQR is a QR
factorization of a small matrix~\cite{fukaya2014choleskyqr2}. 
As a result, Cholesky QR usually runs faster than TSQR.
However, Cholesky QR is rarely used in practice because of its
instability: the distance from orthogonality of its computed $Q$
grows rapidly with the condition number of the input matrix.
By contrast, TSQR is unconditionally stable. 

A recent work by the authors~\cite{yamamotoetna2015}
 establishes that the instability can be cured significantly by repeating the algorithm twice (called CholeskyQR2).
However, the applicability of CholeskyQR2 is still limited by the requirement that the Cholesky factorization of the Gram matrix runs to completion, 
which means it does not always work for matrices $A$ with $\kappa_2(X)=\mathcal{O}({\uu}^{-\frac{1}{2}})$ or larger, where $\uu$ is the unit roundoff.
In this work we extend the applicability of CholeskyQR-based algorithms to $\kappa_2(X)=\mathcal{O}(\uu^{-1})$. 

The idea is to execute a preconditioning step so that the conditioning is improved to a point where CholeskyQR2 is applicable. 
How do we find an effective preconditioner? 
An inspiration to answer this is the fact that Cholesky QR and
CholeskyQR2 belong to the category of \emph{triangular orthogonalization} type algorithms~\cite[Lecture 10]{trefbau}, in contrast to Householder type algorithms, which follows the principle of \emph{orthogonal triangularization}. 
We summarize the classification of algorithms in terms of their principle, communication cost, and stability in Figure~\ref{fig:diagram}, which also clarifies where our contribution (shifted CholeskyQR3) stands. 

\begin{figure}[htbp]
  \centering
        \includegraphics[width=90mm]{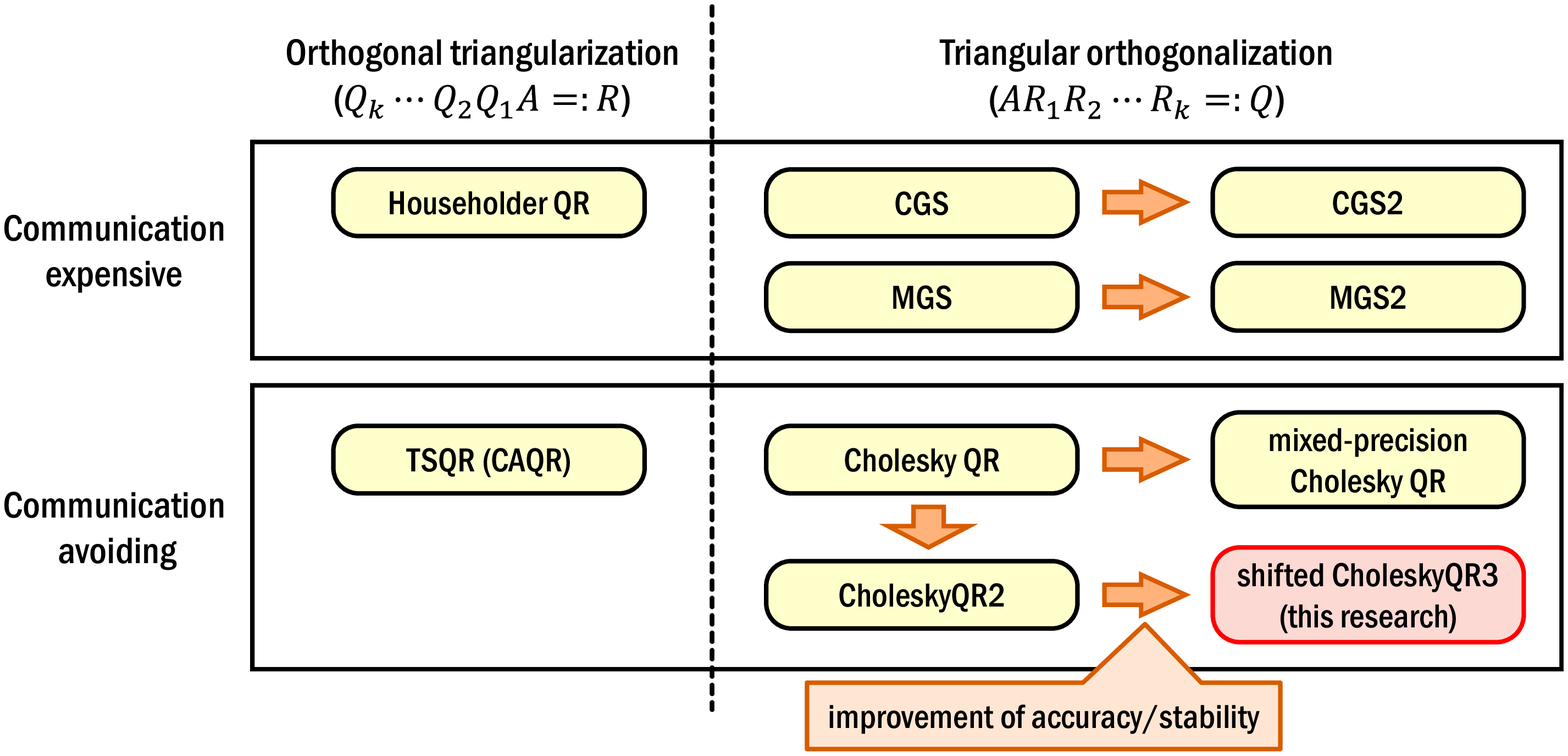}
  \caption{Classification of QR factorization algorithms. 
}
  \label{fig:class}
\end{figure}

In triangular orthogonalization, one right-multiplies an appropriate upper triangular matrix $R$ so that $\kappa_2(AR)=1$. 
Now, what if our goal is merely $\kappa_2(AR)=O(\uu^{-1/2})$? 
Once we have this, we can safely compute the QR factorization $AR=QR_1$ using CholeskyQR2, to arrive at the overall QR factorization $A=Q(R_1R^{-1})$.

Clearly, such $R$ is not unique, and we propose one way of finding such $R$. 
Namely, as in Cholesky QR we compute the Gram matrix, but add a small shift $sI$ so as to guarantee the Cholesky factorization $R^TR= A^TA+sI$ does not break down numerically. We show 
that under the mild assumption $\kappa_2(A)\leq \uu^{-1}$,
the resulting $AR^{-1}$ (note that $R^{-1}$ is also triangular) has reduced condition number $\leq {\uu}^{-\frac{1}{2}}$, for which CholeskyQR2 safely computes the QR factorization, yielding computed $Q,R$ with excellent orthogonality $\|Q^TQ-I\|=\mathcal{O}({\uu})$ and 
residual $\|A-QR\|_F/\|A\|_F=\mathcal{O}({\uu})$, overall a backward stable QR factorization. 
The algorithm is deceptively simple (essentially the only new ingredient being the introduction of a shift); the analysis is however not trivial. We give detailed analysis that gives the constants hidden in the $\mathcal{O}(\uu)$ notation. 

The main message of this paper is that for any matrix with condition number well above ${\uu^{-\frac{1}{2}}}$ (but bounded by ${\uu^{-1}}$), 
the QR factorization can be computed in a backward stable manner by essentially running Cholesky QR thrice. 
We refer to this overall algorithm as \shiftcholqr3; Figure~\ref{fig:diagram} shows its diagram, and how $\kappa_2(A)$ is reduced eventually to 1 through repeated multiplication by triangular matrices. 

\begin{figure}[htbp]
  \centering
        \includegraphics[width=80mm]{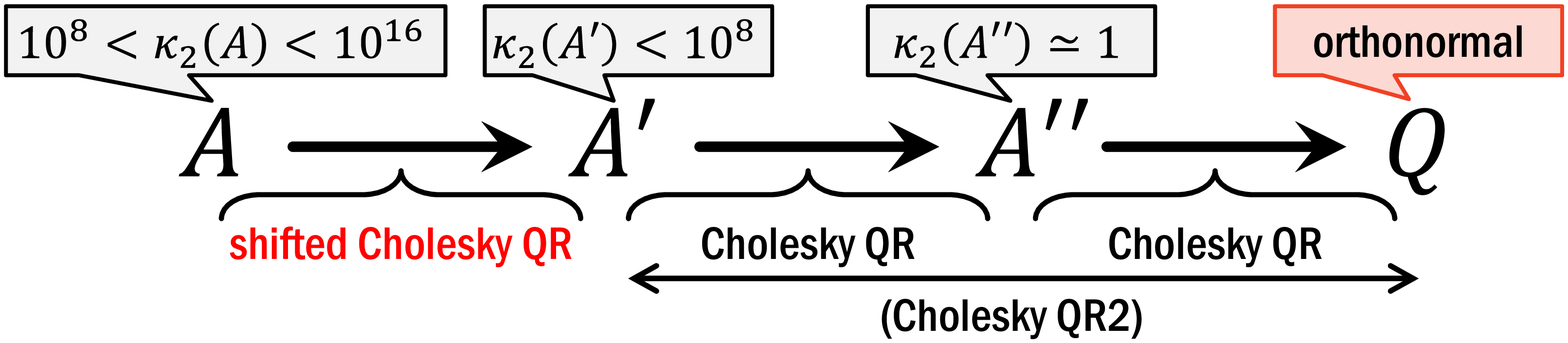}
  \caption{Diagram illustrating \shiftcholqr3.}
  \label{fig:diagram}
\end{figure}

Let us comment on related studies in the literature. 
A recent work of Yamazaki et al.~\cite{yamazaki2015mixed} uses doubled precision arithmetic (e.g., quadruple precision when using double precision) for the first two steps~\eqref{eq:gram}, \eqref{cholA}, and shows that the condition number gets reduced by about $O({\uu}^{-1})$, and thus the QR factorization will be obtained by repeating the process. This results in about 8.5 times  as many arithmetic operations (per iteration) as does the standard CholeskyQR without doubled precision. 
Moreover, this approach clearly requires that higher-precision arithmetic is available, which may not always be the case (and even when it is, it often comes with a significant price in speed). 
\shiftcholqr3 developed in this paper does not require change of arithmetic precision  (in our experiments we use only IEEE double precision in which ${\uu}\approx 1.1\times 10^{-16}$), and requires just one more CholeskyQR iteration than \cite{yamazaki2015mixed}, thus is usually much faster. 

As a bonus, our development is straightforward to apply to computing the QR factorization in a non-standard inner product space induced by a positive definite matrix $B\succ 0$, in which  $(x,y)_B = x^TBy$. 
Available algorithms for this task include 
 (modified) Gram-Schmidt \cite{rozlovznik2012numerical}, and Cholesky QR \cite{lowery2014stability,stathopoulos2002block,kannanthesis}. 
A recent work by Lowery and Langou~\cite{lowery2014stability} studies the numerical stability, with no method apparently being (near) optimal in both orthogonality and backward error. We analyze the stability of \shiftcholqr3 in this case to show it has favorable stability properties.
Moreover, \shiftcholqr3 is significantly faster than Gram-Schmidt type algorithms, achieving up to 40-fold speedup in our experiments with sparse $B$.

This paper is organized as follows. In Section~\ref{sec:alg} we describe the \shiftcholqr\ algorithm. 
Section~\ref{sec:stab} analyzes \shiftcholqr\ in detail and shows that it can be used to reduce $\kappa_2(X)$. In Section~\ref{sec:shiftandchol2} we combine \shiftcholqr\ and CholeskyQR2 to derive \shiftcholqr3 for ill-conditioned matrices with $\kappa_2(X)=\mathcal{O}(\uu^{-1})$ and prove its backward stability. Section~\ref{sec:oblique} discusses the extension to the non-standard inner product space. Numerical experiments are shown to illustrate the results in Section~\ref{sec:ex} and Section~\ref{sec:perf} summarizes the performance of our software implementations. 

We primarily focus on real matrices $A\in\mathbb{R}^{m\times n}$, but everything carries over to complex matrices 
$A\in\mathbb{C}^{m\times n}$. We assume $m\geq  n$, and the algorithms developed here are particularly useful in the tall-skinny case  $m\gg n$.  

\section{Shifted Cholesky QR}\label{sec:alg}
To overcome the numerical breakdown in the Cholesky factorization~\eqref{cholA}, we propose a simple remedy: introduce a small shift $A^TA+sI$ to 
force the computed Gram matrix $A^TA$ to be 
numerically positive definite, so that its Cholesky factorization runs without breakdown. The rest of the algorithm is the 
same as Cholesky QR, and the algorithm \shiftcholqr\ can be summarized in pseudocode in Algorithm \ref{alg1}. 

 \begin{algorithm}[h]
 \caption{\shiftcholqr\ for $X=QR$}
 \label{alg1}
 \begin{algorithmic}[1]
\STATE $ A  = X^{\top}X,$ 
\STATE choose $s>0$
\STATE $R = \mbox{chol}(A + sI)$
\STATE $Q = XR^{-1}$
    \end{algorithmic}
 \end{algorithm}%

Introducing shifts in Cholesky QR was briefly mentioned in \cite{stathopoulos2002block}, also as a remedy 
for the breakdown. However, the focus there was on the algorithm called SVQB, which computes the 
SVD of the Gram matrix instead of the Cholesky factorization. SVQB computes a factorization 
of the form $A = QB$, where $B$ is a full $n\times n$ matrix. 
While $Q$ is still a basis for the column space of $A$, an additional QR factorization of $B$ is needed 
to obtain a complete QR factorization of $A$. Furthermore, 
experiments suggest that there are benefits in working with a triangular matrix as in Cholesky QR as opposed to full matrices as in SVQB, wirh respect to the 
row-wise stability of the computed decomposition. 

We shall discuss an appropriate choice of the shift $s$, 
which will be $O(\uu)$, and prove that applying \shiftcholqr\
to a matrix $X$ with $\uu^{-\frac{1}{2}}< \kappa_2(X) < \uu^{-1}$ results in a computed $\tilde Q$ with much reduced condition number 
$\kappa_2(\tilde Q)<{\uu}^{-\frac{1}{2}}$, for which CholeskyQR2 suffices to compute the QR factorization. 

\section{Convergence and stability analysis of \shiftcholqr}\label{sec:stab}
In this section we present the main technical analysis of \shiftcholqr. The goal is to show that the algorithm improves the conditioning significantly, that is, $\kappa_2(\tilde Q)\ll \kappa_2(X)$. 

A few assumptions need to be made on the matrix size and condition number. The constants below are not of significant importance but chosen so that the forthcoming analysis runs smoothly. We shall assume the following 
hold:
\begin{eqnarray}
&& 6n^2{\uu}\kappa_2(X) < 1, \label{eq:assumption_kappa} \\
&& mn{\uu} \le \frac{1}{64}, \label{eq:assumption_m} \\
&& n(n+1){\uu} \le \frac{1}{64}, \label{eq:assumption_n}\\
&& 11\{mn+n(n+1)\}{\uu}\|X\|_2^2 \le s\le \frac{1}{100}\|X\|_2^2. \label{eq:assumption_s} 
\end{eqnarray}
Roughly speaking, 
the first three assumptions require
that 
the condition number $\kappa_2(X)$ is safely bounded above by ${\uu^{-1}}$, and 
the matrix dimensions $m,n$ are small compared with the precision $\uu^{-1}$. 
We reiterate that $\kappa_2(X)>\uu^{-\frac{1}{2}}$ is allowed, a crucial difference from CholeskyQR2 treated in~\cite{yamamotoetna2015}. 
The assumption \eqref{eq:assumption_s} imposes that $s$ is large enough for Cholesky to work, but small compared with $\|X\|_2^2$. 

Note that by~\eqref{eq:assumption_m}, \eqref{eq:assumption_n} we have 
\begin{equation}
\gamma_m: = \frac{m{\uu}}{1-m{\uu}} \le 1.02m{\uu}, \quad \gamma_{n+1}: = \frac{(n+1){\uu}}{1-(n+1){\uu}} \le 1.02(n+1){\uu}.
\label{eq:gammabound}
\end{equation}

\subsection{Preparations}
We denote the computed quantities in 
 \shiftcholqr, accounting for the numerical errors, by 
\begin{eqnarray}
\hat{A} &=& X^{\top}X + E_1, \label{eq:forward1} \\
\hat{R}^{\top}\hat{R} &=& \hat{A} + sI + E_2 \;=\; X^{\top}X + sI + E_1 + E_2,  \label{eq:shiftchol} \\
\hat q_{i}^{\top}&=& x_{i}^{\top}(\hat{R}+\Delta \hat R_{i})^{-1}\quad (i=1,2,\dots m). \label{eq:backward2}
\end{eqnarray}
$\hat{q}_{i}^{\top}$, $x_{i}^{\top}$ are the $i$th rows of $X$ and $\hat{Q}$ respectively. 
$E_{1}$ is the matrix-matrix multiplication error in the computation of the Gram matrix 
$X^{\top}X$, and $E_{2}$ is the $\hat{A}$ backward error incurred when computing the Cholesky factorization. 
$\Delta \hat R_{i}$ is the backward error involved in the solution of the linear system 
$q_{i}^{\top}\hat{R}=x_{i}^{\top}$. 

We shall take $s$ so that 
\begin{equation}
\norm{E_{1}}_{2},\norm{E_{2}}_{2}=o(s), 
\label{s_shift}
\end{equation}
which means $s=c\max(\norm{E_{1}}_{2},\norm{E_{2}}_{2})$ for some $c>1$.
In fact we shall see that $\|E_1\|_2,\|E_2\|_2=O(\uu)\|X\|^2_2$, so~\eqref{s_shift} simply 
means $s$ is chosen to be safely larger than $\uu\|X\|^2_2$ (qualitatively this is also assumed by~\eqref{eq:backward2}). 
In particular, we write 
\begin{equation}  \label{eq:sdef}
s:=\alpha \norm{X}^{2}_{2}, \quad 0<\alpha<1.  
\end{equation}
\eqref{eq:backward2} gives 
$$
\hat{q}_{i}^{\top}=x_{i}^{\top}(\hat{R}+\Delta \hat{R}_{i})^{-1}=x_{i}^{\top}(I+\hat{R}^{-1}\Delta \hat{R}_{i})^{-1}\hat{R}^{-1}. 
$$
Hence $(I+\hat{R}^{-1}\Delta \hat R_{i})^{-1}=I+ \breve{R}_i$, $\breve{R}_i:=\sum_{k=1}^{\infty}(-\hat{R}^{-1}\Delta\hat{R}_i)^k$, so defining 
\begin{equation}
\Delta  x_i^{\top} := x_i^{\top} \breve{R}_i
\label{eq:breveRi}
\end{equation}
we have 
\begin{equation}\label{eq:backward2x}
\hat{q}_{i}^{\top}= (x_{i}^{\top}+\Delta x_{i}^{\top})\hat{R}^{-1}\quad (i=1,2,\dots m). 
\end{equation}
Let $\Delta X=
\bigg[\begin{smallmatrix}
\Delta x_{1}^{\top}  \\
\vdots\\
\Delta x_{m}^{\top}    
\end{smallmatrix}\bigg]
$ be the matrix obtained by stacking up the row vectors $\Delta x_{i}^{\top}  $.
Then 
\begin{equation}
\hat{Q}=(X+\Delta X)\hat{R}^{-1}. 
\label{eq:backward2X}
\end{equation}

\subsection*{$\Delta x_i^{\top}$ and $\Delta\hat{R}_i$}
For later use, here we examine the relation between 
$\Delta\hat{R}_i$ and $\Delta x_i^{\top}$. 
From~\eqref{eq:backward2} we have 
$
\hat{q}_i^{\top}\hat{R}+\hat{q}_i^{\top}\Delta\hat{R}_i = x_i^{\top}.
$
We also have from~\eqref{eq:backward2x}
$
\hat q_i^{\top}\hat{R} = x_i^{\top}+\Delta x_i^{\top}.
$
Combining these we obtain 
\begin{equation}
\Delta x_i^{\top} = -\hat{q}_i^{\top}\Delta\hat{R}_i.
\label{eq:deltaxdeltaR}
\end{equation}

\subsubsection{Error in computing $X^{\top}X$}
For general matrices $A\in\mathbb R^{m\times n}$, $B\in\mathbb R^{n\times m}$, 
the error in computing the matrix product $C=AB$  can be bounded by~\cite[Ch.~3]{Higham:2002:ASNA}
\begin{equation}
\abs{AB-fl(AB)}\le \gamma_{n}\abs{A}\abs{B}
\label{err:multiplication}
\end{equation}
Here $\abs{A}$ is the matrix whose $(i,j)$ element is $\abs{a_{ij}}$. 
In \cite{yamamotoetna2015} it is shown that $E_{1}$ is bounded by 
\begin{equation}
\norm{E_{1}}_{2}\le \norm{\abs{E_{1}}}_{F}\le \gamma_{m}n\norm{X}^{2}_{2}
\label{err_1}
\end{equation}
Simplifying \eqref{err_1} using  \eqref{eq:gammabound} yields 
$\norm{E_{1}}_{2}\le 1.1mnu\norm{X}^{2}_{2}$.

\paragraph{Backward error in the Cholesky factorization}
Suppose that $A\in\mathbb R^{n\times n}$ is a positive definite matrix 
and its Cholesky factorization computed in floating-point arithmetic runs to completion 
and outputs $\hat{R}$. Then there exists $\Delta A\in\mathbb R^{n\times n}$ such that~\cite[Thm. 10.3]{Higham:2002:ASNA}
\begin{equation}
\hat{R}^{\top}\hat{R}=A+\Delta A, \quad \abs{\Delta A}\le \gamma_{n+1}\abs{\hat{R}^{\top}}\abs{\hat{R}}
\label{err:cholesky}
\end{equation}
Applying this to our situation gives 
$$\norm{E_{2}}_{2}\le\norm{\abs{E_{2}}}_{F}\le \gamma_{n+1}n(\norm{\hat{A}}_{2}+\norm{E_{2}}_{2}).
$$ 
Using \cite[eqn. (3.16)]{yamamotoetna2015} in the right-hand side gives 
\begin{eqnarray}
\norm{E_{2}}_{2}&\le&  \frac{\gamma_{n+1}n((1+\gamma_{m}n+\alpha)}{1-\gamma_{n+1}n}\norm{X}^{2}_{2}\nonumber\\
&\le& \frac{1.02(n+1){\uu}\cdot n(1+1.02m{\uu}\cdot n+0.01)}{1-1.02(n+1){\uu}\cdot n}\|X\|_2^2 \nonumber \\
&\le& \frac{1.02\cdot n(n+1){\uu}\cdot(1+1.02\cdot\frac{1}{64}+0.01)}{1-\frac{1.02}{64}}\|X\|_2^2 \le 1.1n(n+1){\uu}\|X\|_2^2, 
\label{err_2}
\end{eqnarray}
where we have used $\alpha=s/\|X\|_2^2\le 1/100$ and \eqref{eq:assumption_m}, \eqref{eq:assumption_n}.

\paragraph{Bounding $\|\hat{R}^{-1}\|_2$}
Using Weyl's theorem~\cite[Sec.~8.6.2]{golubbook4th} in \eqref{eq:shiftchol} gives 
\begin{equation}\label{sigr}
 \sigma_{n}(\hat{R})^{2}\ge(\sigma_{n}(X))^{2}+s-(\norm{E_{1}}_{2}+\norm{E_{2}}_{2}).
\end{equation}
By $\|E_1\|_2\le 1.1mn{\uu}\|X\|_2^2$ and \eqref{eq:assumption_s} and \eqref{err_2} 
 we obtain 
\begin{equation}
\|E_1\|_2+\|E_2\|_2 \le 1.1(mn+n(n+1)){\uu}\|X\|_2^2 \le 0.1s.
\label{eq:E1E2bound}
\end{equation}
Substituting this into~\eqref{sigr} we obtain 
\begin{equation}\label{eq:sigman}
(\sigma_n(\hat{R}))^2 \ge (\sigma_n(X))^2 + 0.9s, 
\end{equation}
therefore 
\begin{equation}
\|\hat{R}^{-1}\|_2 = \sigma_n(\hat{R})^{-1} \le \frac{1}{\sqrt{(\sigma_n(X))^2+0.9s}}.
\label{eq:Rinvbound}
\end{equation}

\subsubsection{Bounding $\|X\hat{R}^{-1}\|_2$}
We next bound $\|X\hat{R}^{-1}\|_2$. This can be done using 
\eqref{eq:E1E2bound} and \eqref{eq:Rinvbound} as 
\begin{eqnarray}
\|X\hat{R}^{-1}\|_2 &\le& \sqrt{1+\|\hat{R}^{-1}\|_2^2(s+\|E_1\|_2+\|E_2\|_2)} \nonumber \\
&\le& \sqrt{1+\frac{1.1s}{(\sigma_n(X))^2+0.9s}} 
\le \sqrt{1+\frac{1.1}{0.9}} \le 1.5.
\label{eq:XRinvbound}
\end{eqnarray}

\subsubsection{Bounding $\|\Delta\hat{R}_i\|_2$}
Let $R\in\mathbb R^{n\times n}$ be a nonsingular upper triangular matrix. 
Generally, the computed solution $\hat{x}$ obtained by solving an upper triangular linear system 
$Rx=b$ by back substitution in floating-point arithmetic satisfies~\cite[Thm.~8.5]{Higham:2002:ASNA}
\begin{equation}
(R+\Delta R)\hat{x}=b,  \quad \abs{\Delta R}\le \gamma_{n}\abs{R}.
\end{equation}
We have for $1\le i\le m$
\begin{equation}
\|\Delta\hat{R}_i\|_2 \le \|\abs{\Delta\hat{R}_i}\|_F \le \gamma_n \sqrt{n}\|\hat{R}\|_2.
\label{eq:DeltaRbound}
\end{equation}

From~\eqref{eq:shiftchol}, \eqref{eq:assumption_s},  and \eqref{eq:E1E2bound} we obtain 
\begin{equation}
\|\hat{R}\|_2^2 \le \|X\|_2^2+s+\|E_1\|_2+\|E_2\|_2 \le \|X\|_2^2 + 1.1s \le 1.1\|X\|_2^2.
\label{eq:hatR2bound}
\end{equation}
Substituting this into \eqref{eq:DeltaRbound} gives 
\begin{equation}\label{eq:DeltaRibound}
\|\Delta\hat{R}_i\|_2 \le 1.02n{\uu}\cdot\sqrt{n}\cdot\sqrt{1.1}\|X\|_2 \le 1.1n\sqrt{n}{\uu}\|X\|_2.
\end{equation}

\subsubsection{Bounding $\|\Delta X\|_2$ roughly}
Here we give a rough bound for $\|\Delta X\|_F$ and prove that $\|\Delta X\|_F=O({\uu}\|X\|_2^2/\sqrt{s})$.
This will be insufficient for proving that 
$\kappa_2(\hat{Q})=O(1/\sqrt{\uu})$, 
for which we will need $\|\Delta X\|_F=O({\uu}\|X\|_2)$, which we will prove later 
after having obtained a bound for 
$\hat{Q}$. We shall proceed as follows. 
\begin{enumerate}
\item Derive the ``rough'' bound $\|\Delta X\|_F=O({\uu}\|X\|_2^2/\sqrt{s})$. 
\item Use above to show $\|\hat{Q}\|_2=O(1)$. 
\item Use above and \eqref{eq:deltaxdeltaR} to prove the ``tight'' bound $\|\Delta X\|_F=O({\uu}\|X\|_2)$. 
\item Use above to prove 
$\kappa_2(\hat{Q})=O(\uu^{-\frac{1}{2}})$. 
\end{enumerate}

To establish the first statement we recall~\eqref{eq:breveRi}, and  bound $\|\breve{R}_i\|_2$ as 
\begin{eqnarray}
\|\breve{R}_i\|_2 &\le& \sum_{k=1}^{\infty}(\|\hat{R}^{-1}\|_2\|\Delta\hat{R}_i\|_2)^k 
= \frac{\|\hat{R}^{-1}\|_2\|\Delta\hat{R}_i\|_2}{1-\|\hat{R}^{-1}\|_2\|\Delta\hat{R}_i\|_2} .
\label{eq:breveRibound}
\end{eqnarray}
We bound the denominator from below as 
\begin{eqnarray}
1-\|\hat{R}^{-1}\|_2\|\Delta\hat{R}_i\|_2 &\ge& 1-\frac{1.1n\sqrt{n}{\uu}\|X\|_2}{\sqrt{(\sigma_n(X))^2+0.9s}} \nonumber \\
&\ge& 1-\frac{1.1n\sqrt{n}{\uu}\|X\|_2}{\sqrt{0.9\cdot11mn{\uu}\|X\|_2^2}} 
\ge 1-\sqrt{\frac{1.21}{9.9}\cdot\frac{n^2{\uu}}{m}} \ge 0.95.
\end{eqnarray}
Substituting this into~\eqref{eq:breveRibound} 
yields 
\begin{equation}
\|\breve{R}_i\|_2 \le \frac{1}{0.95}\cdot\frac{1.1n\sqrt{n}{\uu}\|X\|_2}{\sqrt{(\sigma_n(X))^2+0.9s}} \le \frac{1.2n\sqrt{n}{\uu}\|X\|_2}{\sqrt{(\sigma_n(X))^2+0.9s}}.
\label{eq:Rbrevebound}
\end{equation}
Together with the fact $\|\Delta x_i^{\top}\|\le\|x_i^{\top}\|\,\|\breve{R}_i\|_2$, 
we can bound 
$\|\Delta X\|_F$ as 
\begin{equation}
\|\Delta X\|_F = \sqrt{\sum_{i=1}^m\|\Delta x_i^{\top}\|^2} \le \sqrt{\sum_{i=1}^m\|x_i^{\top}\|^2}\cdot\max_{1\le i\le m}\|\breve{R}_i\|_2 \le \frac{1.2n^2{\uu}\|X\|_2^2}{\sqrt{(\sigma_n(X))^2+0.9s}}, 
\label{eq:DeltaXbound}
\end{equation}
where we used $\sqrt{\sum_{i=1}^m\|x_i^{\top}\|^2} = \|X\|_F\le \sqrt{n}\|X\|_2$ for the last inequality. 

\subsubsection{Bounding $\|\hat{Q}\|_2$}
We now proceed to bound $\|\hat{Q}\|_2$. 
\begin{lemma}
\label{lemma1}
Suppose that $X\in\mathbb{R}^{m\times n}$
with $m\ge n$ satisfies 
\eqref{eq:assumption_m} and~\eqref{eq:assumption_n}. 
Then, the matrix $\hat{Q}$ obtained by applying the \shiftcholqr\ algorithm in floating-point arithmetic to $X$ satisfies
$$
\|\hat{Q}^{\top}\hat{Q}-I\|_2 < 2, 
$$
and hence 
\begin{equation}
\|\hat{Q}\|_2<\sqrt{3}.
\label{eq:hatQbound}
\end{equation}
\end{lemma}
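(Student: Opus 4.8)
The plan is to work directly from the backward-error representation $\hat{Q}=(X+\Delta X)\hat{R}^{-1}$ in~\eqref{eq:backward2X} and to estimate $\hat{Q}^{\top}\hat{Q}-I$ term by term; the bound on $\|\hat{Q}\|_2$ then follows at once, since $\|\hat{Q}\|_2^2=\|\hat{Q}^{\top}\hat{Q}\|_2\le 1+\|\hat{Q}^{\top}\hat{Q}-I\|_2$, so that $\|\hat{Q}^{\top}\hat{Q}-I\|_2<2$ immediately gives~\eqref{eq:hatQbound}.

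First I would form $\hat{Q}^{\top}\hat{Q}=\hat{R}^{-\top}(X+\Delta X)^{\top}(X+\Delta X)\hat{R}^{-1}$, expand the product, and eliminate $X^{\top}X$ using the shifted Cholesky identity~\eqref{eq:shiftchol} in the form $X^{\top}X=\hat{R}^{\top}\hat{R}-sI-E_1-E_2$. Since $\hat{R}^{-\top}(\hat{R}^{\top}\hat{R})\hat{R}^{-1}=I$, this yields the clean decomposition
\[
\hat{Q}^{\top}\hat{Q}-I=\hat{R}^{-\top}\bigl(-sI-E_1-E_2+X^{\top}\Delta X+\Delta X^{\top}X+\Delta X^{\top}\Delta X\bigr)\hat{R}^{-1},
\]
reducing the task to bounding four pieces after conjugation by $\hat{R}^{-1}$.

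Next I would estimate each piece. The shift/error piece $\hat{R}^{-\top}(sI+E_1+E_2)\hat{R}^{-1}$ is controlled by $\|\hat{R}^{-1}\|_2^2\,(s+\|E_1\|_2+\|E_2\|_2)$; inserting $\|E_1\|_2+\|E_2\|_2\le 0.1s$ from~\eqref{eq:E1E2bound} and $\|\hat{R}^{-1}\|_2^2\le 1/((\sigma_n(X))^2+0.9s)$ from~\eqref{eq:Rinvbound} gives a bound of at most $1.1s/(0.9s)\approx 1.22$. For the cross terms, the essential idea—and the step I expect to be the main obstacle—is to avoid the naive estimate $\|X\|_2\|\Delta X\|_2\|\hat{R}^{-1}\|_2^2$, which is useless because $\|X\|_2\|\hat{R}^{-1}\|_2$ scales like the (large) condition number. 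Instead I would regroup $\hat{R}^{-\top}X^{\top}\Delta X\hat{R}^{-1}=(X\hat{R}^{-1})^{\top}(\Delta X\hat{R}^{-1})$ and use the already-established $\|X\hat{R}^{-1}\|_2\le 1.5$ from~\eqref{eq:XRinvbound} together with $\|\Delta X\hat{R}^{-1}\|_2\le\|\Delta X\|_F\|\hat{R}^{-1}\|_2$. Combining the rough bound~\eqref{eq:DeltaXbound} with~\eqref{eq:Rinvbound} gives $\|\Delta X\|_F\|\hat{R}^{-1}\|_2\le 1.2n^2{\uu}\|X\|_2^2/((\sigma_n(X))^2+0.9s)\le 1.2n^2{\uu}\|X\|_2^2/(0.9s)$, and the assumption $s\ge 11mn{\uu}\|X\|_2^2$ from~\eqref{eq:assumption_s} collapses this to at most $(1.2/9.9)(n/m)\le 0.13$ since $n\le m$. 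Hence each cross term is at most about $1.5\times 0.13\approx 0.2$, and the quadratic term $\|\Delta X\hat{R}^{-1}\|_2^2$ is negligibly small.

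Finally I would sum the contributions: the shift/error piece ($\approx 1.22$), the two cross terms ($\approx 0.4$ together), and the quadratic remainder ($\approx 0.02$), which total strictly less than $2$, establishing $\|\hat{Q}^{\top}\hat{Q}-I\|_2<2$. Taking square roots in $\|\hat{Q}\|_2^2\le 1+\|\hat{Q}^{\top}\hat{Q}-I\|_2<3$ then yields~\eqref{eq:hatQbound}. The only delicate point throughout is the cross-term regrouping that replaces the product $\|X\|_2\|\hat{R}^{-1}\|_2$ by the bounded quantity $\|X\hat{R}^{-1}\|_2$; everything else is substitution of the previously derived bounds.
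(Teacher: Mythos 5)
Your proposal is correct and follows essentially the same route as the paper's own proof: the identical expansion of $\hat{Q}^{\top}\hat{Q}-I$ via the shifted Cholesky identity, the same key regrouping of the cross terms as $(X\hat{R}^{-1})^{\top}(\Delta X\hat{R}^{-1})$ so that the bounded quantity $\|X\hat{R}^{-1}\|_2\le 1.5$ replaces the condition-number-sized product $\|X\|_2\|\hat{R}^{-1}\|_2$, and the same substitution of \eqref{eq:E1E2bound}, \eqref{eq:Rinvbound}, \eqref{eq:XRinvbound}, and \eqref{eq:DeltaXbound} to total roughly $1.22+0.36+0.02<2$. The paper's arithmetic (terms $\tfrac{1.1}{0.9}$, $\tfrac{4}{11}$, $\tfrac{16}{1089}$) matches your estimates, so nothing further is needed.
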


\begin{proof} 
We have 
\begin{eqnarray}
 \hat{Q}^{\top}\hat{Q} \nonumber
&&= \hat{R}^{-\top}(X+\Delta X)^{\top}(X+\Delta X)\hat{R}^{-1} \nonumber \\
&&= I - \hat{R}^{-\top}(s+E_1+E_2)\hat{R}^{-1} + (X\hat{R}^{-1})^{\top}\Delta X\hat{R}^{-1} + \hat{R}^{-\top}\Delta X^{\top}(X\hat{R}^{-1}) + \hat{R}^{-\top}\Delta X^{\top}\Delta X\hat{R}^{-1}. \nonumber
\end{eqnarray}
Thus we can bound $\|\hat{Q}^{\top}\hat{Q}-I\|_2$ as 
\begin{align}
  \label{eq:eq:Th1-2}
\|\hat{Q}^{\top}\hat{Q}-I\|_2 
\le& \|\hat{R}^{-1}\|_2^2(s+\|E_1\|_2+\|E_2\|_2) + 2\|\hat{R}^{-1}\|_2\|X\hat{R}^{-1}\|_2\|\Delta X\|_F\\
\nonumber& + \|\hat{R}^{-1}\|_2^2\|\Delta X\|_F^2 .    
\end{align}

The first term of~\eqref{eq:eq:Th1-2} can be bounded as
\begin{equation}
\|\hat{R}^{-1}\|_2^2(s+\|E_1\|_2+\|E_2\|_2) \le \frac{1.1s}{(\sigma_n(X))^2+0.9s} \le \frac{1.1}{0.9}.
\end{equation}
and for the second term in \eqref{eq:eq:Th1-2}, using \eqref{eq:Rinvbound}, \eqref{eq:XRinvbound} and \eqref{eq:DeltaXbound} we obtain 
\begin{eqnarray}
2\|\hat{R}^{-1}\|_2\|X\hat{R}^{-1}\|_2\|\Delta X\|_F
&\le& 2\cdot\frac{1}{\sqrt{(\sigma_n(X))^2+0.9s}}\cdot 1.5\cdot\frac{1.2n^2{\uu}\|X\|_2^2}{\sqrt{(\sigma_n(X))^2+0.9s}} \nonumber \\
&\le& \frac{2\cdot 1.5\cdot 1.2\cdot\frac{1}{11}s}{0.9s}=\frac{4}{11}.
\end{eqnarray}
For the third term in \eqref{eq:eq:Th1-2}, from 
\eqref{eq:Rinvbound} and \eqref{eq:DeltaXbound}
\begin{eqnarray}
\|\hat{R}^{-1}\|_2^2\|\Delta X\|_F^2
&\le& \frac{1}{(\sigma_n(X))^2+0.9s}\cdot\frac{(1.2n^2{\uu}\|X\|_2^2)^2}{(\sigma_n(X))^2+0.9s} \nonumber \\
&\le& \frac{(1.2\cdot\frac{1}{11}s)^2}{(0.9s)^2} = \frac{16}{1089}.
\end{eqnarray}
Summarizing, we can bound the right-hand side of \eqref{eq:eq:Th1-2} as 
$\|\hat{Q}^{\top}\hat{Q}-I\|_2 < 2, $
as required. 
\end{proof}

\subsubsection{Bounding the residual in \shiftcholqr}
We now bound the residual. 
\begin{lemma}\label{lemma3}
Under the assumptions in Lemma~\ref{lemma1}, 
$\hat{Q}\hat{R}$ computed by \shiftcholqr\ satisfies 
\[\frac{\norm{\hat{Q}\hat{R}-X}_{F}}{\norm{X}_{2}}
\leq 
2n^{2}{\uu}. 
\]
\end{lemma}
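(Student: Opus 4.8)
The plan is to recognize that the residual equals \emph{exactly} $\|\Delta X\|_F$, and then to bound $\|\Delta X\|_F$ by the sharp $\mathcal{O}(\uu\|X\|_2)$ estimate rather than the rough bound~\eqref{eq:DeltaXbound}. First I would note that the backward-error identity~\eqref{eq:backward2X}, $\hat{Q}=(X+\Delta X)\hat{R}^{-1}$, holds exactly as an algebraic consequence of the definition of $\Delta X$; multiplying through by $\hat{R}$ gives $\hat{Q}\hat{R}=X+\Delta X$, so $\hat{Q}\hat{R}-X=\Delta X$ and hence $\|\hat{Q}\hat{R}-X\|_F=\|\Delta X\|_F$. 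Thus the lemma reduces to proving $\|\Delta X\|_F\le 2n^2{\uu}\|X\|_2$.

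It is worth stressing why the rough bound~\eqref{eq:DeltaXbound} is inadequate here: since $s=\mathcal{O}(\uu)\|X\|_2^2$, its denominator $\sqrt{(\sigma_n(X))^2+0.9s}$ is only of order $\sqrt{\uu}\|X\|_2$, so~\eqref{eq:DeltaXbound} yields merely $\|\Delta X\|_F=\mathcal{O}(\sqrt{\uu})\|X\|_2$, i.e.\ a relative residual of order $\sqrt{\uu}$. To obtain the claimed $\mathcal{O}(\uu)$ residual I would instead route through the relation~\eqref{eq:deltaxdeltaR}, $\Delta x_i^{\top}=-\hat{q}_i^{\top}\Delta\hat{R}_i$, which couples each row perturbation to the already-controlled triangular-solve backward error and to the corresponding row of $\hat{Q}$. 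Taking norms gives $\|\Delta x_i^{\top}\|\le\|\hat{q}_i^{\top}\|\,\|\Delta\hat{R}_i\|_2$, and summing over $i=1,\dots,m$ yields
$$\|\Delta X\|_F^2=\sum_{i=1}^m\|\Delta x_i^{\top}\|^2\le\max_{1\le i\le m}\|\Delta\hat{R}_i\|_2^2\sum_{i=1}^m\|\hat{q}_i^{\top}\|^2=\max_{1\le i\le m}\|\Delta\hat{R}_i\|_2^2\,\|\hat{Q}\|_F^2,$$
using that the $\hat{q}_i^{\top}$ are precisely the rows of $\hat{Q}$.

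Finally I would substitute the two ingredients already in hand: the triangular-solve bound $\|\Delta\hat{R}_i\|_2\le 1.1\,n\sqrt{n}\,{\uu}\|X\|_2$ from~\eqref{eq:DeltaRibound}, and the orthogonality bound of Lemma~\ref{lemma1}, which via~\eqref{eq:hatQbound} and $\|\hat{Q}\|_F\le\sqrt{n}\,\|\hat{Q}\|_2$ gives $\|\hat{Q}\|_F\le\sqrt{3n}$. This produces $\|\Delta X\|_F\le\sqrt{3n}\cdot 1.1\,n\sqrt{n}\,{\uu}\|X\|_2=1.1\sqrt{3}\,n^2{\uu}\|X\|_2<2n^2{\uu}\|X\|_2$, and dividing by $\|X\|_2$ delivers the stated bound. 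The only genuine subtlety—the ``main obstacle''—is the conceptual step of replacing the factor $\|X\|_2/\sqrt{s}$ in the rough bound by the $\mathcal{O}(1)$ quantity $\|\hat{Q}\|_F$; this is exactly item~3 in the four-step program outlined before the lemma, and it is what lets us convert a relative residual of order $\sqrt{\uu}$ into one of order ${\uu}$, everything else being routine substitution of previously derived constants.
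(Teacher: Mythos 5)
Your proposal is correct and follows essentially the same route as the paper: both arguments reduce the residual to the row-wise identity $\hat{q}_i^{\top}\hat{R}-x_i^{\top}=-\hat{q}_i^{\top}\Delta\hat{R}_i$, bound each row by $\|\hat{q}_i^{\top}\|\cdot 1.1n\sqrt{n}{\uu}\|X\|_2$ via~\eqref{eq:DeltaRibound}, and sum to obtain $\|\hat{Q}\|_F\cdot 1.1n\sqrt{n}{\uu}\|X\|_2\le\sqrt{3n}\cdot 1.1n\sqrt{n}{\uu}\|X\|_2\le 2n^2{\uu}\|X\|_2$ using Lemma~\ref{lemma1}. The paper even records the observation you lead with (that $\hat{Q}\hat{R}-X=\Delta X$, so this gives the tighter bound~\eqref{eq:DeltaXbound2}) immediately after the lemma.
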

\begin{proof}
First note that 
$$
\norm{\hat q_{i}^{\top}\hat{R} -x_{i}^{\top}}=\norm{\hat q_{i}^{\top}\hat{R} -\hat q_{i}^{\top}(\hat{R} +\Delta \hat{R} _{i})}\le \norm{\hat q_{i}^{\top}\Delta \hat{R} _{i}}\le \norm{\hat q_{i}^{\top}}\norm{\Delta \hat{R} _{i}}.
$$
Substituting \eqref{eq:DeltaRibound} into this gives 
\begin{equation}
\|\hat{q}_i^{\top}\hat{R}-x_i^{\top}\| \le \|\hat{q}_i^{\top}\|\cdot 1.1n\sqrt{n}{\uu}\|X\|_2.
\end{equation}
On the other hand, from~\eqref{eq:hatQbound} we have 
\begin{equation}
\|\hat{Q}\|_F < \sqrt{3n}.
\end{equation}
Hence it follows that 
\begin{eqnarray}
\|\hat{Q}\hat{R}-X\|_F &=& \sqrt{\sum_{i=1}^m\|\hat{q}_i^{\top}\hat{R}-x_i^{\top}\|^2} \le \sqrt{\sum_{i=1}^m\|\hat{q}_i^{\top}\|^2}\cdot 1.1n\sqrt{n}{\uu}\|X\|_2 \nonumber \\
&=& \|\hat{Q}\|_F\cdot 1.1n\sqrt{n}{\uu}\|X\|_2 \le 2n^2{\uu}\|X\|_2.
\label{eq:residual}
\end{eqnarray}
\end{proof}

Lemma~\ref{lemma3}
 shows that \shiftcholqr\ gives optimal residual up to a factor involving a low-degree polynomial of $m,n$ (recall that $\|X\|_F\leq \sqrt{n}\|X\|_2$).

\subsection*{Tighter bound for $\|\Delta X\|_F$}
By \eqref{eq:backward2X}
 we have $\Delta X=\hat{Q}\hat{R}-X$, so \eqref{eq:residual} in fact 
provides a bound for $\|\Delta X\|_F$ that is tighter than the
previous bound \eqref{eq:DeltaXbound}:
\begin{equation}  \label{eq:DeltaXbound2}
  \|\Delta X\|_F \le 2n^2{\uu}\|X\|_2.
\end{equation}

\subsection{Main result}
\label{mainresult}
We are now ready to state the main result of the section, which bounds the condition number of $\hat Q$. 

\begin{theorem} \label{eq:kap2}
With one step of \shiftcholqr\ 
in double precision arithmetic 
  applied to $X$ satisfying \eqref{eq:assumption_kappa}--\eqref{eq:assumption_n}
with
  shift $s$ satisfying~\eqref{eq:assumption_s}, 
 we obtain $\hat Q$ with 
\begin{equation} \label{eq:condimprove}
\kappa_2(\hat  Q)\leq 
2\sqrt{1+\alpha(\kappa_2(X))^2}\cdot\sqrt{3}, 
\qquad \mbox{where}\quad \sqrt{\alpha} = \frac{\sqrt{s}}{\|X\|_2}. 
 \end{equation} \end{theorem}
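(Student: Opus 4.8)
The plan is to write $\kappa_2(\hat Q)=\|\hat Q\|_2/\sigma_n(\hat Q)$ and bound the two factors separately. The numerator is already under control: Lemma~\ref{lemma1} gives $\|\hat Q\|_2<\sqrt3$, which supplies the factor $\sqrt3$ in~\eqref{eq:condimprove}. Everything therefore reduces to a lower bound on $\sigma_n(\hat Q)$, and rewriting the target as $1+\alpha(\kappa_2(X))^2=(\sigma_n(X)^2+s)/\sigma_n(X)^2$ shows it suffices to prove
$$
\sigma_n(\hat Q)\ \ge\ \frac12\cdot\frac{\sigma_n(X)}{\sqrt{\sigma_n(X)^2+s}}.
$$
The factor $\tfrac12$ is the slack left for rounding errors; in exact arithmetic one has the identity $Q^\top Q=I-s(X^\top X+sI)^{-1}$, whence $\sigma_n(Q)^2=\sigma_n(X)^2/(\sigma_n(X)^2+s)$, exactly the quantity on the right. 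So the bound merely says the computed $\hat Q$ loses at most a constant factor relative to this ideal behavior.

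First I would lower-bound $\sigma_n(X\hat R^{-1})$, the noise-free part of $\hat Q=X\hat R^{-1}+\Delta X\hat R^{-1}$. The naive split $\sigma_n(X\hat R^{-1})\ge\sigma_n(X)\,\sigma_n(\hat R^{-1})=\sigma_n(X)/\|\hat R\|_2$ is too lossy: it routes through $\|\hat R\|_2\approx\|X\|_2$ and would yield only $\kappa_2(\hat Q)=O(\kappa_2(X))$ rather than the desired $\sqrt{1+\alpha(\kappa_2(X))^2}$, entirely missing the benefit of the shift. Instead I would work with the Rayleigh quotient directly: for any $w$, using $\hat R^\top\hat R=X^\top X+sI+E_1+E_2$ from~\eqref{eq:shiftchol},
$$
\frac{\|Xw\|^2}{\|\hat Rw\|^2}=\frac{\|Xw\|^2}{\|Xw\|^2+s\|w\|^2+w^\top(E_1+E_2)w}=\frac{t}{t+s+\epsilon},
$$
where $t=\|Xw\|^2/\|w\|^2\in[\sigma_n(X)^2,\sigma_1(X)^2]$ and $|\epsilon|\le\|E_1+E_2\|_2\le0.1s$ by~\eqref{eq:E1E2bound}. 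Since $s+\epsilon\ge0.9s>0$, the map $t\mapsto t/(t+s+\epsilon)$ is increasing, so minimizing over $w$ (equivalently over $v=\hat Rw$) gives $\sigma_n(X\hat R^{-1})^2\ge\sigma_n(X)^2/(\sigma_n(X)^2+1.1s)$. This monotonicity argument, which replaces $\|\hat R\|_2$ by the correct directional scale $\sqrt{\sigma_n(X)^2+s}$, is the crux of the proof and the step I expect to be the main obstacle to get clean.

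Finally I would pass from $X\hat R^{-1}$ to $\hat Q$ via $\sigma_n(\hat Q)\ge\sigma_n(X\hat R^{-1})-\|\Delta X\|_F\,\|\hat R^{-1}\|_2$, inserting the tight bound $\|\Delta X\|_F\le2n^2\uu\|X\|_2$ from~\eqref{eq:DeltaXbound2} and $\|\hat R^{-1}\|_2\le(\sigma_n(X)^2+0.9s)^{-1/2}$ from~\eqref{eq:Rinvbound}. Using assumption~\eqref{eq:assumption_kappa} in the form $2n^2\uu\|X\|_2<\tfrac13\sigma_n(X)$ bounds the correction term by roughly $0.35\,\sigma_n(X)/\sqrt{\sigma_n(X)^2+s}$, whereas the main term is at least roughly $0.95\,\sigma_n(X)/\sqrt{\sigma_n(X)^2+s}$; the discrepancy between the constants $0.9,1,1.1$ in the various denominators is harmless and I would absorb it by comparing $\sqrt{\sigma_n(X)^2+s}$ with $\sqrt{\sigma_n(X)^2+0.9s}$ and $\sqrt{\sigma_n(X)^2+1.1s}$ (ratios within a few percent of $1$). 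Subtracting leaves $\sigma_n(\hat Q)>\tfrac12\,\sigma_n(X)/\sqrt{\sigma_n(X)^2+s}$, and dividing $\|\hat Q\|_2<\sqrt3$ by this lower bound yields exactly $\kappa_2(\hat Q)<2\sqrt3\,\sqrt{1+\alpha(\kappa_2(X))^2}$, as claimed.
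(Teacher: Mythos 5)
Your proof is correct, and its outer frame coincides with the paper's: $\|\hat Q\|_2<\sqrt3$ from Lemma~\ref{lemma1}, the Weyl splitting $\sigma_n(\hat Q)\ge\sigma_n(X\hat R^{-1})-\|\Delta X\hat R^{-1}\|_2$, the tight bound \eqref{eq:DeltaXbound2} combined with \eqref{eq:Rinvbound} for the error term, and \eqref{eq:assumption_kappa} to absorb it. Where you genuinely depart from the paper is the crux, the lower bound on $\sigma_n(X\hat R^{-1})$. The paper takes the SVD $X=U\Sigma V^\top$, constructs a diagonal $G$ so that $X^\top X+sI=(U(\Sigma+G)V^\top)^\top(U(\Sigma+G)V^\top)$, shows that $T=U(\Sigma+G)V^\top\hat R^{-1}$ has singular values in $[0.9,1.1]$, extracts a polar-type factorization $T=Q(I+E)$ with $Q$ having orthonormal columns and $\|E\|_2\le0.1$, and then applies $\sigma_{\min}(AB)\ge\sigma_{\min}(A)\sigma_{\min}(B)$ to $\Sigma V^\top\hat R^{-1}={\rm diag}\bigl(\sigma_i(X)/\sqrt{(\sigma_i(X))^2+s}\bigr)\,U^\top Q(I+E)$, obtaining $\sigma_n(X\hat R^{-1})\ge 0.9\,\sigma_n(X)/\sqrt{(\sigma_n(X))^2+s}$. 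Your variational argument --- $\sigma_n(X\hat R^{-1})^2=\min_{w\neq0}\|Xw\|^2/\|\hat Rw\|^2$, the quotient rewritten via \eqref{eq:shiftchol} as $t/(t+s+\epsilon)$ with $t\ge(\sigma_n(X))^2$ and $|\epsilon|\le0.1s$ by \eqref{eq:E1E2bound}, then monotonicity in $t$ --- reaches the same place with no SVD, no polar factor, and no singular-value product inequality, and it even yields the slightly sharper intermediate bound $\sigma_n(X)/\sqrt{(\sigma_n(X))^2+1.1s}$: the paper's multiplicative loss of $0.9$ becomes an additive $0.1s$ under the square root in yours. Both bounds clear the $1/2$ threshold by essentially the same closing arithmetic, so the constant $2\sqrt3$ in \eqref{eq:condimprove} is unaffected; your route is simply shorter and marginally tighter. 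One small inaccuracy in a non-load-bearing aside: $Q^\top Q=I-s(X^\top X+sI)^{-1}$ is not an identity in exact arithmetic --- the correct relation is $Q^\top Q=I-sR^{-\top}R^{-1}$, and $R^{-\top}R^{-1}=(RR^\top)^{-1}$ is only \emph{similar} to $(X^\top X+sI)^{-1}=(R^\top R)^{-1}$ --- but since similar matrices share eigenvalues, the spectral consequence you draw, $(\sigma_n(Q))^2=(\sigma_n(X))^2/((\sigma_n(X))^2+s)$, is correct, and nothing in your actual proof depends on that identity.
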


\begin{proof}
Recall from~\eqref{eq:hatQbound} that $\sigma_{1}(\hat{Q})<\sqrt{3}$. The remaining task is to bound $\sigma_n(\hat{Q})$ from below. 
Using
Weyl's theorem in \eqref{eq:backward2X} gives
\begin{equation}  \label{eq:weylg}
\sigma_{n}(\hat{Q}) \ge \sigma_{n}(X\hat{R}^{-1})-\norm{\Delta X \hat{R}^{-1}}_{2}.    
\end{equation}
Using \eqref{eq:Rinvbound} and \eqref{eq:DeltaXbound2} we obtain
\begin{equation}
  \|\Delta X\hat{R}^{-1}\|_2 \le \|\Delta X\|_F\|\hat{R}^{-1}\|_2 \le \frac{2n^2{\uu}\|X\|_2}{\sqrt{(\sigma_n(X))^2+0.9s}}.
  \label{eq:DeltaXRinvbound}
\end{equation}
Note that this is $O(\uu^{\frac{1}{2}})$ when 
we take $s=O(u)$ and regard low-degree polynomials in $m,n$ as constants.

We next bound $\sigma_{n}(X\hat{R}^{-1})$ from below.
We proceed by examining the equation
\begin{equation} \hat{R}^{-\top}(X^{\top}X+sI)\hat{R}^{-1} = I -
  \hat{R}^{-\top}(E_1+E_2)\hat{R}^{-1}.  \end{equation} Let
$X=U\Sigma V^{\top}$ by the SVD. Then for a diagonal matrix $G$, we
can write
\begin{equation} X^{\top}X + sI =
  (U(\Sigma+G)V^{\top})^{\top}(U(\Sigma+G)V^{\top}). \end{equation}
Indeed the left-hand side is $V(\Sigma^2+sI)V^{\top}$ and the
right-hand side $V(\Sigma+G)^2V^{\top}$, so we can take
\begin{equation} G=(\Sigma^2+sI)^{\frac{1}{2}}-\Sigma = {\rm
    diag}(\sqrt{(\sigma_{i}(X))^2+s}-\sigma_{i}(X)) \end{equation} Now
setting $T=U(\Sigma+G)V^{\top}\hat{R}^{-1}$ we have
\begin{equation} T^{\top}T = I -
  \hat{R}^{-\top}(E_1+E_2)\hat{R}. \end{equation} We next bound the
singular values of $T$. By \eqref{eq:E1E2bound} and
\eqref{eq:Rinvbound} we have
\begin{equation}
  \|\hat{R}^{-\top}(E_1+E_2)\hat{R}^{-1}\|_2 \le \|\hat{R}^{-1}\|_2^2(\|E_1\|_2+\|E_2\|_2) \le \frac{0.1s}{(\sigma_n(X))^2+0.9s} \le \frac{1}{9},
\end{equation}
so
$\sigma_i(T)\in[\sqrt{1-\frac{1}{9}},\sqrt{1+\frac{1}{9}}]\subseteq
[0.9,1.1]$.  Therefore, letting $T = U'(I+E')V'$ be the SVD where $E'$ is diagonal, we have
\begin{equation} T =
  U^{\prime}(I+E^{\prime})V^{\prime\top}=U^{\prime}V^{\prime\top}(I+V^{\prime}E^{\prime}V^{\prime\top})
  = Q(I+E), \label{eq:IplusE} \end{equation}
where $Q=U^{\prime}V^{\prime\top}$ has orthonormal columns, and 
$\|E\|_2=\|V^{\prime}E^{\prime}V^{\prime\top}\|_2=\|E^{\prime}\|_2\le 0.1$.

Now plugging into~\eqref{eq:IplusE} the definition of $T$ gives 
\begin{eqnarray} 
Q(I+E) &=&U(\Sigma+G)V^{\top}\hat{R}^{-1}.
\end{eqnarray}
Recalling that $X=U\Sigma V^\top$, 
we have $\sigma_i(X\hat{R}^{-1})=\sigma_i(\Sigma V^\top\hat{R}^{-1})$
and 
\begin{eqnarray}
\Sigma V^\top \hat{R}^{-1} &=&
  \Sigma(\Sigma+G)^{-1} U^{\top}Q(I+E) \nonumber \\ &=& {\rm
    diag}\left(\frac{\sigma_i(X)}{\sqrt{(\sigma_i(X))^2+s}}\right)U^{\top}Q(I+E). \end{eqnarray}
Using the general inequality for singular values of matrix products~$\sigma_{\min}(AB)\ge\sigma_{\min}(A)\sigma_{\min}(B)$ (which holds when $A$ or $B$ is square) along with~\eqref{eq:sigman}, we obtain
\begin{equation}
\sigma_n(X\hat{R}^{-1}) \ge \frac{\sigma_n(X)}{\sqrt{(\sigma_n(X))^2+s}}\cdot 0.9.
\label{eq:XRinvbound2}
\end{equation}

Using this and~\eqref{eq:DeltaXRinvbound}, from~\eqref{eq:weylg} we obtain
\begin{eqnarray}
\sigma_n(\hat{Q}) &\ge& \frac{0.9\sigma_n(X)}{\sqrt{(\sigma_n(X))^2+s}} - \frac{2n^2{\uu}\|X\|_2}{\sqrt{(\sigma_n(X))^2+0.9s}} \nonumber \\
&\ge& \frac{0.9\sigma_n(X)}{\sqrt{(\sigma_n(X))^2+s}} - \frac{2n^2{\uu}\|X\|_2}{\sqrt{0.9}\sqrt{(\sigma_n(X))^2+s}} \nonumber \\
&\ge& \frac{0.9}{\sqrt{(\sigma_n(X))^2+s}}\left(\sigma_n(X)-\frac{2}{0.9\sqrt{0.9}}\cdot n^2{\uu}\|X\|_2\right) \nonumber \\
&\ge& \frac{0.9}{\sqrt{(\sigma_n(X))^2+s}}(\sigma_n(X)-0.4\sigma_n(X)) \nonumber \\
&\ge& \frac{\sigma_n(X)}{2\sqrt{(\sigma_n(X))^2+s}}
= \frac{1}{2\sqrt{1+\alpha(\kappa_2(X))^2}}.
\end{eqnarray}
We have used the assumption~\eqref{eq:assumption_kappa} for the fourth inequality.
Together with \eqref{eq:hatQbound} we obtain 
\begin{equation}
\kappa_2(\hat{Q})=\frac{\|\hat{Q}\|_2}{\sigma_n(\hat{Q})} \le 2\sqrt{1+\alpha(\kappa_2(X))^2}\cdot\sqrt{3}.
\label{eq:CNreduction}
\end{equation}
\end{proof}
Theorem~\ref{eq:kap2} implies that, provided that $\alpha(\kappa_2(X))^2\gg 1$, 
\begin{equation}\label{eq:redcond}
\kappa_2(\hat{Q}) \lesssim 2\sqrt{3}\cdot\sqrt{\alpha}\kappa_2(X). 
\end{equation}
Thus applying one step of \shiftcholqr\ results in 
the condition number being reduced by about a factor $\sqrt{\alpha}=\frac{\sqrt{s}}{\|X\|_2}$.

\section{\shiftcholqr3: \shiftcholqr\ + CholeskyQR2}\label{sec:shiftandchol2}
We now discuss an algorithm for the QR factorization of an ill-conditioned  matrix 
that first uses \shiftcholqr, then runs CholeskyQR2. We refer to this algorithm as \shiftcholqr3, since it runs Cholesky QR (or its shifted variant) three times. 
The initial \shiftcholqr\ can thus be regarded as a preconditioning step that reduces the condition number so that CholeskyQR2 becomes applicable. 
We continue to assume~\eqref{eq:assumption_kappa}--\eqref{eq:assumption_n}; a particular case of interest is $\uu^{-\frac{1}{2}}<\kappa_2(X)<\uu^{-1}$. 

\subsection{Choice of shift $s$}\label{sec:choices}
We first discuss the choice of the shift $s$ for \shiftcholqr, which balances two requirements: 
\begin{itemize} 
\item $s$ should be as small as possible to maximize the condition number improvement~\eqref{eq:condimprove}.  
\item $s$ should be large enough so that the Cholesky factorization $\mbox{chol}(A + sI)$ runs to completion without numerically breaking down. 
\end{itemize} 
In addition to these, $s$ must satisfy Eq.~(\ref{eq:assumption_s}) for the error analysis in the previous section to be valid. 
 
To address the issue of breakdown we review  
Rump and Ogita's~\cite{rump2007super} error analysis for Cholesky factorization, which builds upon Demmel's early work~\cite{demmellawn14}.  
 
Let $A$ be a symmetric positive definite matrix. It is known~\cite[Thm.~2.3]{rump2007super} that  
$\mbox{chol}(A)$ succeeds numerically if the following holds:  
\begin{eqnarray}\label{cholruncomplete} 
\lambda_{n}(A) \ge \sum^{n}_{i=1}\frac{\gamma_{i+1}}{1-\gamma_{i+1}}\|A\|_2. 
\label{eq:Rump} 
\end{eqnarray} 
It has been shown in \cite{yanagisawa2014modified} that a sufficient condition for (\ref{eq:Rump}) to hold is 
\begin{eqnarray}\label{eq:delta1} 
\tilde{s}\ge c_{n+2}\uu\mathrm{tr}(A), \quad c_{n+2} &:=& \frac{(n + 2)}{1 - (n + 1)(n + 3)\uu} < 2.2(n+1).
\label{eq:tildes} 
\end{eqnarray}

In our context of the Cholesky factorization~\eqref{eq:shiftchol}, 
we need to take into account the error term $E_{1}$ in computing the matrix multiplication $A= X^{\top}X$.  
By  \eqref{eq:forward1} and Weyl's theorem we obtain a lower bound 
\begin{equation} 
\lambda_{n}(\hat{A})\ge 
\lambda_{n}(X^{\top}X) -  \norm{E_1}_{2}. 
 \label{spd_safe}  
\end{equation} 
This means that $\hat{A}$  may not be positive definite if  
$\lambda_{n}(X^{\top}X)\le \gamma_{m}n\norm{X}^{2}_{2}$.  
Accordingly, to apply formula (\ref{eq:tildes}), we must first shift $\hat{A}$ by $\gamma_{m}n\norm{X}^{2}_{2}$. Thus we conclude that a safe choice of $s$ to 
avoid numerical breakdown is 
\begin{equation} 
\tilde{s}:=\gamma_{m+1}n\norm{X}^{2}_{2}+c_{n+2}{\bf u}\mbox{tr}(\hat{A}+\gamma_{m+1} n\norm{X}^{2}_{2} I).
\label{eq:tildes2}
\end{equation}
By further taking into account (\ref{eq:assumption_s}), we have 
\begin{equation} 
s:=\max\left(11\{mn+n(n+1)\}{\bf u}\|X\|_2^2,\tilde{s}\right). 
\label{eq:finds2max} 
\end{equation} 
This expression can be simplified by evaluating $\tilde{s}$ using the results in the previous section. First, we note that $\hat{A}=X^{\top}X + E_1$ from~\eqref{eq:forward1}. Denoting the $j$th column vector of $X$ by $\tilde{\bf x}_j$, we can evaluate $\mbox{tr}(E_1)$ as
\begin{equation}\label{eq:trE1}
\mbox{tr}(E_1) \le \sum_{i=1}^n|E_1|_{ii} \le \sum_{i=1}^n\gamma_m|\tilde{\bf x}_i|^{\top}|\tilde{\bf x}_i| = \gamma_m\sum_{i=1}^n\|\tilde{\bf x}_i\|^2 = \gamma_m\|X\|_F^2 \le \gamma_m n\|X\|_2^2.
\end{equation}
On the other hand,
\begin{equation}\label{eq:trXTX}
\mbox{tr}(X^{\top}X) = \|X\|_F^2 \le n\|X\|_2^2.
\end{equation}
Plugging these into the right-hand side of~\eqref{eq:tildes2}, we can bound $\tilde{s}$ as
\begin{eqnarray}
\tilde{s} &\le& \gamma_{m+1}n\norm{X}_2^2 + 2.2(n+1){\bf u}(n\|X\|_2^2 + \gamma_m n\|X\|_2^2 + \gamma_{m+1}n\|X\|_2^2) \nonumber \\
&\le& 2.4\{mn+n(n+1)\}{\bf u}\|X\|_2^2,
\end{eqnarray}
which shows that the maximum in~\eqref{eq:finds2max} is attained by the first argument. Hence, in what follows we shall assume
\begin{equation}
s:=11\{mn+n(n+1)\}{\bf u}\|X\|_2^2.
\label{eq:finds2}
\end{equation}
In practice, since $\norm{X}_{2}$ is expensive to compute we can estimate it reliably using a norm estimator e.g. MATLAB's function {\tt normest}, or alternatively just replace it with $\norm{X}_{F}$, which results in a larger (more conservative) shift.

We now summarize our algorithm in pseudocode. 
Algorithm~\ref{alg12} 
blends \shiftcholqr\ and CholeskyQR2 in an adaptive manner, initially attempting to reduce the condition number using \shiftcholqr\ 
as in~\eqref{eq:redcond} so that CholeskyQR(2) becomes applicable. 
The shifts are introduced only when necessary, judged by whether or not the Cholesky factorization 
$\mbox{chol}(A^{(k)})$ breaks down. 
Our experiments suggest that Algorithm~\ref{alg12} may well be applicable even to extremely ill-conditioned matrices with possibly $\kappa_2(X)>\uu^{-1}$. 


 \begin{algorithm}[h]
 \caption{Iterated CholeskyQR for $X=QR$ with shifts when necessary.}
 \label{alg12}
 \begin{algorithmic}[1]
  \STATE Let $Q:=X,R:=I$
  \REPEAT
  \STATE $A:=Q^TQ$
  \STATE $\tilde{R}:=\mbox{chol}(A)$\quad //chol: Cholesky factorization
  \IF{$\mbox{chol}(A)$ breaks down}
  \STATE $s:=11\{mn+n(n+1)\}{\bf u}\|X\|_2^2$
\quad //introduce shift
  \STATE $\tilde R:=\mbox{chol}(A+sI)$
  \ENDIF
  \STATE $Q:=Q\tilde R^{-1}$, $R:=\tilde RR$
 \UNTIL{$\norm{Q^{T}Q-I}_{F}\le \sqrt{n}\uu$}
    \end{algorithmic}
 \end{algorithm}

In the analysis below, we focus on the case 
$\kappa_2(X)<\uu^{-1}$; to be precise, when 
$\uu^{-\frac{1}{2}}<\kappa_2(X)$ (so that CholeskyQR2 is inapplicable) and $\kappa_2(X)$ is bounded from above by~\eqref{eq:guarantee} given below. 
For such matrices, the algorithm provenly runs one \shiftcholqr, then CholeskyQR2 (thus executing three Cholesky factorizations). 
This computes a stable QR factorization, and we refer to this algorithm as \shiftcholqr3. For completeness we present its pseudocode in Algorithm~\ref{alg:shiftchol3}.
The last two lines represent CholeskyQR2 for the $Q$ obtained by the first \shiftcholqr. 

 \begin{algorithm}[h]
 \caption{\shiftcholqr3 for $X=QR$. 
}
 \label{alg:shiftchol3}
 \begin{algorithmic}[1]
  \STATE Let $Q:=X$
  \STATE $A:=Q^TQ$
  \STATE $s:=11\{mn+n(n+1)\}{\bf u}\|X\|_2^2$\quad //introduce shift
  \STATE $R:=\mbox{chol}(A+sI)$ \quad // \shiftcholqr
  \STATE $Q:=Q\tilde R^{-1}$
  \STATE $\tilde R:=\mbox{chol}(Q^TQ)$,  $Q:=Q\tilde R^{-1}$, $R:=\tilde RR$ // Cholesky QR
  \STATE $\tilde R:=\mbox{chol}(Q^TQ)$,  $Q:=Q\tilde R^{-1}$, $R:=\tilde RR$ // CholeskyQR2
    \end{algorithmic}
 \end{algorithm}

\subsection{When is thrice enough?}
Here we derive a condition on $\kappa_2(X)$ that guarantees that 
\shiftcholqr3 gives a numerically stable QR factorization of $X$. 

Recall that \eqref{eq:CNreduction} gives a bound for the condition number of $\hat{Q}$ obtained by \shiftcholqr: $\kappa_2(\hat Q) \le 2\sqrt{1+\alpha(\kappa_2(X))^2}\cdot\sqrt{3}$. As in~\eqref{eq:finds2}, to guarantee avoidance of breakdown we take $\alpha=11\{mn+n(n+1)\}{\bf u}$, so the condition number of $\hat Q$ is bounded as 
\begin{equation}
  \label{eq:hatqcond}
\kappa_2(\hat Q) \le 2\sqrt{3}\sqrt{1+11\{mn+n(n+1)\}{\bf u}(\kappa_2(X))^2}
\end{equation}
On the other hand, 
as shown in~\cite{yamamotoetna2015}, 
a sufficient condition for CholeskyQR2 to compute  a stable QR factorization of $\hat{Q}$ is 
\begin{equation}\label{eq:cholqr2gurantee}
\kappa_2(\hat{Q})\le\frac{1}{8\sqrt{\{mn+n(n+1)\}{\uu}}}. 
\end{equation}
Combining these facts, we obtain the following condition under which \shiftcholqr3 is guaranteed to compute a numerically stable QR factorization:
\[2\sqrt{3}\sqrt{1+11\{mn+n(n+1)\}{\bf u}(\kappa_2(X))^2}
\leq\frac{1}{8\sqrt{\{mn+n(n+1)\}{\uu}}}.\]
If $\kappa_2(X)>\uu^{-\frac{1}{2}}$, we have $1+11\{mn+n(n+1)\}{\bf u}(\kappa_2(X))^2 \simeq 11\{mn+n(n+1)\}{\bf u}(\kappa_2(X))^2$ and the condition can be simplified as
\begin{equation}\label{eq:guarantee}
\kappa(X) \leq \frac{{\uu^{-1}}}{96\{mn+n(n+1)\}}.
\end{equation}
Note that this ensures that the condition~\eqref{eq:assumption_kappa} for the error analysis in
Section~\ref{sec:stab} is automatically satisfied.

In practice, it often happens that \shiftcholqr3 (or more often the iterated Algorithm~\ref{alg12}) 
 computes the QR factorization for matrices with even larger condition numbers than indicated by~\eqref{eq:guarantee}. One explanation is that in the absence of roundoff errors, one iteration of \shiftcholqr\ reduces the condition number by a factor $\approx \uu^{\frac{1}{2}}$. However, we think that a rigorous convergence analysis in finite precision arithmetic would be possible only under some assumption on $\kappa_2(X)$, and that~\eqref{eq:guarantee} provides a sharp bound up to (at most) a low-degree polynomial in $m,n$.

We now examine the numerical stability of \shiftcholqr3 and show that it enjoys excellent stability both in orthogonality and backward error. 
Roughly, the result follows by combining the facts that (i) \shiftcholqr\ gives a $\hat Q$ with $\kappa_2(\hat Q)<\uu^{-\frac{1}{2}}$ 
with small backward error, and (ii) for matrices with condition number $<\uu^{-\frac{1}{2}}$, CholeskyQR2 computes a stable QR factorization of $\hat Q$ as shown in~\cite{yamamotoetna2015}. Below we make this statement precise. 

\subsection{Numerical stability of \shiftcholqr3}

\begin{theorem}\label{thm:maincholqr3}
  Let $X\in\mathbb{R}^{m\times n}$ be a matrix satisfying~\eqref{eq:assumption_kappa}--\eqref{eq:assumption_n} and 
~\eqref{eq:guarantee}. 
Then \shiftcholqr3 computes a QR factorization $X\approx \hat Q\hat R$ satisfying the orthogonality measure 
\begin{eqnarray}
\norm{\hat Q^{\top}\hat Q-I}_{F}\le6\{mn+n(n+1)\}\uu, \label{boundQ:Chol2}
\end{eqnarray}
and backward error 
  \begin{equation}    \label{eq:berr}
\frac{\|\hat{Q}\hat{R}-X\|_F}{\|X\|_2}
\le 15n^2{\uu}.     
  \end{equation}
\end{theorem}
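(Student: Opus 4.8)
The plan is to treat \shiftcholqr3 as the composition of one \shiftcholqr\ step, analyzed as a ``preconditioner,'' followed by the two Cholesky QR steps that together constitute CholeskyQR2 applied to the intermediate factor. Write $\hat Q_1,\hat R_1$ for the computed output of the first (shifted) step, so that Lemma~\ref{lemma1} gives $\|\hat Q_1\|_2<\sqrt3$, Lemma~\ref{lemma3} gives the residual $\|\hat Q_1\hat R_1-X\|_F\le 2n^2\uu\|X\|_2$, and \eqref{eq:hatR2bound} gives $\|\hat R_1\|_2\le\sqrt{1.1}\,\|X\|_2$. The first thing to verify is that CholeskyQR2 is legitimately applicable to $\hat Q_1$: this is exactly what \eqref{eq:hatqcond} together with the assumption \eqref{eq:guarantee} delivers, since the derivation following \eqref{eq:guarantee} shows that $\kappa_2(\hat Q_1)$ meets the CholeskyQR2 admissibility bound \eqref{eq:cholqr2gurantee}. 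Hence the stability results for CholeskyQR2 from \cite{yamamotoetna2015} may be invoked with input $\hat Q_1$.

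The orthogonality bound \eqref{boundQ:Chol2} then requires essentially no new work. The final $\hat Q$ is produced solely by the last two (unshifted) Cholesky QR steps acting on $\hat Q_1$, i.e.\ it is the $Q$-factor returned by CholeskyQR2 applied to $\hat Q_1$. Since $\hat Q_1$ satisfies \eqref{eq:cholqr2gurantee}, the orthogonality estimate of \cite{yamamotoetna2015} applies verbatim to $\hat Q_1$ and yields $\|\hat Q^\top\hat Q-I\|_F\le 6\{mn+n(n+1)\}\uu$, which is precisely \eqref{boundQ:Chol2}. The point worth stating carefully is that orthogonality of the output depends only on the CholeskyQR2 stage and is insensitive to how the ``sufficiently well-conditioned'' input $\hat Q_1$ was obtained.

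The backward error \eqref{eq:berr} is where the two stages must be stitched together. I would telescope the overall residual as
\[
\hat Q\hat R-X=\bigl(\hat Q\hat R-\hat Q_1\hat R_1\bigr)+\bigl(\hat Q_1\hat R_1-X\bigr),
\]
and bound the two pieces separately. The second piece is controlled directly by Lemma~\ref{lemma3}: $\|\hat Q_1\hat R_1-X\|_F\le 2n^2\uu\|X\|_2$. For the first piece, recall that the final triangular factor is the finite-precision product $\hat R=\mathrm{fl}(R^{(3)}R^{(2)}R^{(1)})$ with $R^{(1)}=\hat R_1$, so writing $R_2$ for the CholeskyQR2 composite factor $R^{(3)}R^{(2)}$ I would use $\hat Q\hat R-\hat Q_1\hat R_1\approx(\hat Q R_2-\hat Q_1)\hat R_1$ and bound
\[
\|\hat Q\hat R-\hat Q_1\hat R_1\|_F\le \|\hat Q R_2-\hat Q_1\|_F\,\|\hat R_1\|_2+(\text{product rounding}),
\]
where the CholeskyQR2 backward error of \cite{yamamotoetna2015} supplies $\|\hat Q R_2-\hat Q_1\|_F\le c\,\uu\|\hat Q_1\|_2$ for a low-degree polynomial $c$ in $m,n$. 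Inserting $\|\hat Q_1\|_2<\sqrt3$ and $\|\hat R_1\|_2\le\sqrt{1.1}\,\|X\|_2$ turns this into a term of size $O(\uu)\|X\|_2$, and adding the $2n^2\uu\|X\|_2$ from the second piece should yield the stated $15n^2\uu\|X\|_2$ once constants are combined.

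The main obstacle, and the only genuinely delicate part, is the bookkeeping in this last step: one must account for the rounding errors incurred in forming the two triangular matrix products that build $\hat R$ (governed by a bound of the form $|\mathrm{fl}(AB)-AB|\le\gamma_n|A||B|$ as in \eqref{err:multiplication}), express the CholeskyQR2 residual in a form compatible with the composite factor $R_2$, and then assemble all the constants---the $2$ from Lemma~\ref{lemma3}, the CholeskyQR2 constant $c$, the factors $\sqrt3$ and $\sqrt{1.1}$, and the product-rounding contributions---so that their sum does not exceed $15n^2$. Everything else is a direct appeal to the already-established Lemmas~\ref{lemma1} and~\ref{lemma3}, to Theorem~\ref{eq:kap2} via \eqref{eq:hatqcond}, and to the CholeskyQR2 analysis of \cite{yamamotoetna2015}.
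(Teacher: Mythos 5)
Your proposal is correct and takes essentially the same route as the paper's proof: the orthogonality bound is inherited directly from the CholeskyQR2 analysis of \cite{yamamotoetna2015} once \eqref{eq:hatqcond} and \eqref{eq:guarantee} certify \eqref{eq:cholqr2gurantee}, and your telescoped residual is precisely the paper's splitting into $\Delta\hat Q\hat R+\hat Z\Delta S+\Delta X$, bounded via Lemma~\ref{lemma3}, $\|\hat Q_1\|_2<\sqrt{3}$ from Lemma~\ref{lemma1}, and $\|\hat R_1\|_2\le\sqrt{1.1}\,\|X\|_2$. One caveat on the constant bookkeeping you defer: reaching $15n^2\uu$ requires the sharpened CholeskyQR2 backward-error bound $\|\Delta\hat Q\|_F\le 5n^2\uu\|\hat Q_1\|_2$ proved in the paper's appendix, since the bound published in \cite{yamamotoetna2015} carries an extra factor $\sqrt{n}$ and would only yield a residual of order $n^2\sqrt{n}\,\uu\|X\|_2$.
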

\begin{proof}
The orthogonality measure of the output $\hat Q$ of \shiftcholqr3 is essentially exactly the same as that of CholeskyQR2, which is 
analyzed in detail in~\cite{yamamotoetna2015}. This is because the bound there applies to any matrix with condition number $\lesssim \uu^{-\frac{1}{2}}$.

We next establish~\eqref{eq:berr}. 
By~\eqref{eq:backward2X}, with the first \shiftcholqr\ executed in finite precision arithmetic we have 
\begin{equation}
X+\Delta X = \hat{Q}\hat{R}, 
\end{equation}
where $\|\Delta X\|_F$ is bounded as in (\ref{eq:DeltaXbound2}). 
We then apply CholeskyQR2 to 
$\hat{Q}$ to obtain the QR factorization $\hat{Q}=ZU$. 
In finite precision we have 
\begin{equation}
\hat{Q}+\Delta\hat{Q} = \hat{Z}\hat{U}, 
\label{eq:CholeskyQR2resid}
\end{equation}
where (see Appendix; this bound slightly improves~\cite{yamamotoetna2015})
\begin{equation}
\|\Delta\hat{Q}\|_F \le 5n^2{\uu}\|\hat{Q}\|_2.
\end{equation}
The upper triangular factor $S$ in the QR factorization of the 
original matrix $X$ is computed as
\begin{equation}
\hat{S}={\it fl}(\hat{U}\hat{R}) = \hat{U}\hat{R}+\Delta S.
\end{equation}
Here $\Delta S$ represents the forward error incurred in the matrix multiplication.

Summarizing, we can bound the overall backward error as 
\begin{eqnarray}
\|\hat{Z}\hat{S}-X\|_F
&=& \|\hat{Z}(\hat{U}\hat{R}+\Delta S)-\hat{Q}\hat{R}+\Delta X\|_F \nonumber \\
&=& \|\Delta\hat{Q}\hat{R}+\hat{Z}\Delta S+\Delta X\|_F \nonumber \\
&\le& \|\Delta\hat{Q}\|_F\|\hat{R}\|_2 + \|\hat{Z}\|_2\|\Delta S\|_F + \|\Delta X\|_F.
\label{eq:resid_shiftedCholesky2}
\end{eqnarray}
We now bound the terms in the right-hand side. 
For the first term, 
using \cite[Thm~3.5]{yamamotoetna2015} and (\ref{eq:hatQbound})
we can bound $\|\Delta\hat{Q}\|_F$ as 
\begin{equation}
\|\Delta\hat{Q}\|_F \le 5n^2{\uu}\|\hat{Q}\|_2 \le 5\sqrt{3}n^2{\uu}.
\end{equation}
To bound $\|\hat{R}\|_2$, we use (\ref{eq:hatR2bound}) to obtain 
\begin{equation}
\|\hat{R}\|_2 \le \sqrt{1.1}\|X\|_2.
\label{eq:hatRbound}
\end{equation}
We next bound the second term in~\eqref{eq:resid_shiftedCholesky2}. 
By \cite[Thm.~3.3]{yamamotoetna2015}, 
$\|\hat{Z}\|_2$ can be bounded as
\begin{equation}
\|\hat{Z}\|_2 \le \sqrt{1+6(mn{\uu}+n(n+1){\uu})} \le \sqrt{1+6\left(\frac{1}{64}+\frac{1}{64}\right)} = \frac{\sqrt{76}}{8}.
\label{eq:hatZnorm}
\end{equation}
Regarding $\|\Delta S\|_F$, 
using the general error bound for matrix multiplications 
$|\Delta S|\le\gamma_n|\hat{U}|\,|\hat{R}|$ we obtain 
\begin{eqnarray}
\|\Delta S\|_F \le \gamma_n\|\,|\hat{U}|\,|\hat{R}|\,\| \le \gamma_n\|\hat{U}\|_F\|\hat{R}\|_F \le n\gamma_n\|\hat{U}\|_2\|\hat{R}\|_2.
\end{eqnarray}
Here $\|\hat{R}\|_2$ can be bounded as in (\ref{eq:hatRbound}).
To bound $\|\hat{U}\|_2$, we recall (\ref{eq:CholeskyQR2resid})
and left-multiply $\hat{Z}^{\top}$ to obtain
\begin{equation}
\hat{Z}^{\top}(\hat{Q}+\Delta\hat{Q})=\hat{Z}^{\top}\hat{Z}\hat{U}. 
\end{equation}
Now, by \cite[Thm.~3.3]{yamamotoetna2015}, 
the eigenvalues of $\hat{Z}^{\top}\hat{Z}$ lie in the interval $[1-6(mn{\uu}+n(n+1){\uu}),1+6(mn{\uu}+n(n+1){\uu})]$, so it follows that 
\begin{eqnarray}
\|\hat{U}\|_2 &\le& \|(\hat{Z}^{\top}\hat{Z})^{-1}\|_2\|\hat{Z}^{\top}\|_2(\|\hat{Q}\|_2+\|\Delta\hat{Q}\|_2) \nonumber \\
&\le& \frac{1}{1-6(mn{\uu}+n(n+1){\uu})}\cdot\frac{\sqrt{76}}{8}(\sqrt{3}+5\sqrt{3}n^2{\uu}) \nonumber \\
&\le& \frac{1}{1-6\left(\frac{1}{64}+\frac{1}{64}\right)}\cdot\frac{\sqrt{76}}{8}\left(\sqrt{3}+5\sqrt{3}\cdot\frac{1}{64}\right) \le 2.6.
\end{eqnarray}
Finally, we can bound $\|\Delta X\|_F$ as in (\ref{eq:DeltaXbound2}). 

Combining the above bounds and substituting into 
(\ref{eq:resid_shiftedCholesky2}) yields 
\begin{eqnarray}
\|\hat{Z}\hat{S}-X\|_F
&\le& 5\sqrt{3}n^2{\uu}\cdot\sqrt{1.1}\|X\|_2 + \frac{\sqrt{76}}{8}\cdot n\cdot 1.02n{\uu}\cdot 2.6\cdot\sqrt{1.1}\|X\|_2 + 2n^2{\uu}\|X\|_2 \nonumber \\
&\le& 15n^2{\uu}\|X\|_2, 
\end{eqnarray}
as required.   
\end{proof}

\subsection*{Comparison with CGS2}\label{com_cgs2}
As we saw above, \eqref{eq:guarantee} is a sufficient condition for \shiftcholqr3 to work in finite precision arithmetic. 
This condition roughly requires that $\kappa_2(X)(mn+n^2){\uu}=O(1)$. 
Let us compare this with the analysis in \cite{giraud2005rounding} for the CGS2 algorithm, which shows that 
\begin{equation}
\kappa_2(X)m^2 n^3{\uu}=O(1)
\label{eq:CGS2assumption}
\end{equation}
is a sufficient condition for CGS2 to compute the QR factorization in a stable manner. 

Observe that 
(\ref{eq:CGS2assumption}) is much more stringent than \eqref{eq:guarantee}; indeed in large-scale computing in which $m,n\gg 1000$, with double precision  \eqref{eq:CGS2assumption} is unlikely to be satisfied even with well-conditioned $X$. 

This difference might appear to suggest \shiftcholqr3 is superior to CGS2 in terms of robustness, but we have not observed this in practice. We suspect that the difference is an artifact of the analysis, and the practical robustness 
of CGS2 and \shiftcholqr3 seem comparable. 
An advantage of \shiftcholqr3  is that it 
is rich in BLAS-3 operations, and offers ample opportunity for parallelization.

\section{Oblique inner product}\label{sec:oblique}
The (shifted) Cholesky QR algorithm is readily applicable to the QR decomposition in a non-standard inner product space 
$(x,y)_B = x^TBy$
defined via a symmetric positive definite matrix $B\in\mathbb{R}^{m\times m}$. The resulting Algorithm~\ref{alg1B} is almost identical to Algorithm~\ref{alg1}, except that $A$ is computed as $A=X^{\top}BX$ and the shift $s$ is chosen in a manner to be described below. 
 \begin{algorithm}[h]
 \caption{\shiftcholqr\ for $X=QR$,  $Q^TBQ=I_n$}
 \label{alg1B}
 \begin{algorithmic}[1]
\STATE $ A  = X^{\top}BX$ 
\STATE choose $s>0$
\STATE $R  =  \mbox{chol}(A + sI)$
\STATE $Q  =  XR^{-1}$
    \end{algorithmic}
 \end{algorithm}%

In this section, we examine the stability of Algorithm~\ref{alg1B}. The argument closely parallels that in Sections~\ref{sec:stab} and \ref{sec:shiftandchol2}, but new features arise that affect the bounds, in particular involving  $\sqrt{\kappa_2(B)}$. 

\subsection{Assumptions}\label{sec:stabB}
We make the following assumptions on $m$, $n$, $X$ and $B$. As in the case of standard inner product, the constants below are not of significant importance but chosen so that the analysis goes through. 
\begin{eqnarray}
&& 6n^2{\uu}\cdot\frac{\|X\|_2\sqrt{\|B\|_2}}{\sqrt{\sigma_n(X^{\top}BX)}}\cdot\sqrt{\kappa_2(B)} < 1, \label{eq:assumption_kappa_B} \\
&& m\sqrt{mn}{\uu} \le \frac{1}{64}, \label{eq:assumption_m_B} \\
&& n(n+1){\uu} \le \frac{1}{64}, \label{eq:assumption_n_B}\\
&& 11\{2m\sqrt{mn}+n(n+1)\}{\uu}\|X\|_2^2\|B\|_2 \le s\le \frac{1}{100}\|X\|_2^2\|B\|_2. \label{eq:assumption_s_B} 
\end{eqnarray}
The assumption~\eqref{eq:assumption_kappa_B} roughly demands that $X$ is not too ill-conditioned relative to $\uu$. 
As before, \eqref{eq:assumption_m_B} and \eqref{eq:assumption_n_B} require that the matrix dimensions $m,n$ are small compared with the precision $\uu^{-1}$.
In \eqref{eq:assumption_kappa_B}, we can use a simpler assumption
\begin{equation}
6n^2{\bf u}\kappa_2(X)\kappa_2(B)<1,
\end{equation}
but \eqref{eq:assumption_kappa_B} is less stringent. 

\subsection{Preparations}
We denote the computed results of
 \shiftcholqr\ in an oblique inner product, accounting for the numerical errors, as
\begin{eqnarray}
\hat{A} &=& X^{\top}BX + E_1, \label{eq:forward1_B} \\
\hat{R}^{\top}\hat{R} &=& \hat{A} + sI + E_2 \;=\; X^{\top}BX + sI + E_1 + E_2,  \label{eq:shiftchol_B} \\
\hat q_{i}^{\top}&=& x_{i}^{\top}(\hat{R}+\Delta \hat R_{i})^{-1}\quad (i=1,2,\dots m). \label{eq:backward2_B}
\end{eqnarray}
$\hat{q}_{i}^{\top}$, $x_{i}^{\top}$ are the $i$th rows of $Q$ and $\hat{X}$, respectively. 
$E_{1}$ is the matrix-matrix multiplication error in the computation of the Gram matrix $X^{\top}BX$, and $E_{2}$ is the $\hat{A}$ backward error incurred when computing the Cholesky factorization. $\Delta \hat R_{i}$ is the backward error involved in the solution of the linear system $q_{i}^{\top}\hat{R}=x_{i}^{\top}$.
Equation \eqref{eq:backward2_B} can be rewritten as
\begin{equation}
\hat q_i^{\top}\hat{R} = x_i^{\top}+\Delta x_i^{\top},
\end{equation}
where
\begin{equation}
\Delta x_i^{\top} = -\hat{q}_i^{\top}\Delta\hat{R}_i.
\label{eq:deltaxdeltaR_B}
\end{equation}
Let $\Delta X=
\bigg[  \begin{smallmatrix}
\Delta x_{1}^{\top}  \\
\vdots\\
\Delta x_{m}^{\top}      
  \end{smallmatrix}\bigg]
$ be the matrix obtained by stacking up the row vectors $\Delta x_{i}^{\top}$. Then 
\begin{equation}
X+\Delta X=\hat{Q}\hat{R},
\label{eq:backward2X_B}
\end{equation}
showing that $\Delta X $ is the residual.

Now we give bounds on $\|E_1\|_2$, $\|E_2\|_2$, $\|\hat{R}^{-1}\|_2$, $\|B^{\frac{1}{2}}X\hat{R}^{-1}\|_2$, $\|\Delta\hat{R}_i\|_2$, $\|\hat{Q}\|_2$ and $\|\Delta X\|_F$ as in the case of standard inner product. 

\subsubsection{Bounding $\|E_1\|_2$, $\|E_2\|_2$, $\|\hat{R}^{-1}\|_2$, $\|B^{\frac{1}{2}}X\hat{R}^{-1}\|_2$ and $\|\Delta\hat{R}_i\|_2$}
Using the standard error analysis of matrix-matrix multiplication and Cholesky factorization \cite{Higham:2002:ASNA} and the assumptions \eqref{eq:assumption_m_B} and  \eqref{eq:assumption_n_B}, we can bound $\|E_1\|_2$ and $\|E_2\|_2$ as
\begin{eqnarray}
\norm{E_{1}}_{2} &\le& 2.2m\sqrt{mn}{\bf u}\norm{X}^{2}_{2}\|B\|_2, 
\label{err_1_B} \\
\norm{E_{2}}_{2} &\le& 1.1n(n+1){\bf u}\|X\|_2^2\|B\|_2.
\label{err_2_B}
\end{eqnarray}
See \cite{yamamotojsiam2015} for details. From the assumption \eqref{eq:assumption_s_B} on $s$, these bounds ensure that
\begin{equation}
\|E_1\|_2+\|E_2\|_2 \le (2.2m\sqrt{mn}+1.1n(n+1)){\uu}\|X\|_2^2\|B\|_2 \le 0.1s.
\label{eq:E1E2bound_B}
\end{equation}
Combining this with \eqref{eq:shiftchol_B} and using Weyl's theorem~\cite[Sec.~8.6.2]{golubbook4th}, we obtain a bound on $\|\hat{R}^{-1}\|_2$:
\begin{equation}
\|\hat{R}^{-1}\|_2 \le \frac{1}{\sqrt{\sigma_n(X^{\top}BX)+0.9s}}.
\label{eq:Rinvbound_B}
\end{equation}
The bound on $\|B^{\frac{1}{2}}X\hat{R}^{-1}\|_2$ can be derived as follows. First, note that from \eqref{eq:shiftchol_B},
\begin{equation}
(B^{\frac{1}{2}}X\hat{R}^{-1})^{\top}(B^{\frac{1}{2}}X\hat{R}^{-1}) = I-\hat{R}^{-\top}(sI+E_1+E_2)\hat{R}^{-1}.
\end{equation}
Using \eqref{eq:E1E2bound_B} and \eqref{eq:Rinvbound_B}, we have 
\begin{eqnarray}
\|B^{\frac{1}{2}}X\hat{R}^{-1}\|_2 &\le& \sqrt{1+\|\hat{R}^{-1}\|_2^2(s+\|E_1\|_2+\|E_2\|_2)} \nonumber \\
&\le& \sqrt{1+\frac{1.1s}{\sigma_n(X^{\top}BX)+0.9s}} \nonumber \\
&\le& \sqrt{1+\frac{1.1}{0.9}} \le 1.5.
\label{eq:XRinvbound_B}
\end{eqnarray}
The bound on $\|\Delta\hat{R}_i\|_2$ can be obtained from the standard error analysis of backward substitution \cite[Thm.~8.5] {Higham:2002:ASNA} as
\begin{equation}
\|\Delta\hat{R}_i\|_2 \le \|\abs{\Delta\hat{R}_i}\|_F \le \gamma_n \sqrt{n}\|\hat{R}\|_2.
\label{eq:DeltaRbound_B}
\end{equation}
From~\eqref{eq:shiftchol_B}, \eqref{eq:assumption_s_B},  and \eqref{eq:E1E2bound_B} we obtain 
\begin{equation}
\|\hat{R}\|_2^2 \le \|X\|_2^2\|B\|_2+s+\|E_1\|_2+\|E_2\|_2 \le \|X\|_2^2\|B\|_2 + 1.1s \le 1.1\|X\|_2^2\|B\|_2.
\label{eq:hatR2bound_B}
\end{equation}
Substituting this into \eqref{eq:DeltaRbound_B} gives 
\begin{equation}
\|\Delta\hat{R}_i\|_2 \le 1.02n{\uu}\cdot\sqrt{n}\cdot\sqrt{1.1}\|X\|_2\sqrt{\|B\|_2} \le 1.1n\sqrt{n}{\uu}\|X\|_2\sqrt{\|B\|_2}.
\label{eq:DeltaRibound_B}
\end{equation}

\subsubsection{Bounding $\|\Delta X\|_2$ roughly}
Here we give a rough bound for $\|\Delta X\|_F$ and prove that $\|\Delta X\|_F=O({\uu}\|X\|_2^2\sqrt{\|B\|_2}/\sqrt{s})$.
This will be insufficient for proving our main result, Theorem \ref{eq:kap2_B}, 
for which we will need $\|\Delta X\|_F=O({\uu}\|X\|_2\sqrt{\kappa_2(B)})$, which we will prove later 
after having obtained a bound for 
$\|\hat{Q}\|_2$. We shall proceed in the following steps. 
\begin{enumerate}
\item Derive the ``rough'' bound $\|\Delta X\|_F=O({\uu}\|X\|_2^2\sqrt{\|B\|_2}/\sqrt{s})$. 
\item Use above to show $\|B^{\frac{1}{2}}\hat{Q}\|_2=O(1)$ and $\|\hat{Q}\|_2=O(1/\sqrt{\sigma_n(B)})$. 
\item Use above and \eqref{eq:deltaxdeltaR_B} to prove the ``tight'' bound $\|\Delta X\|_F=O({\uu}\|X\|_2\sqrt{\kappa_2(B)})$. 
\item Use above to prove Theorem \ref{eq:kap2_B}. 
\end{enumerate}

To establish the first statement, we express $\Delta x_i^{\top}$ in terms of $x_i^{\top}$, $\hat{R}$ and $\Delta\hat{R}_i$. Substituting (\ref{eq:backward2_B}) into (\ref{eq:deltaxdeltaR_B}), we have
\begin{equation}
\Delta x_i^{\top} = x_i^{\top}(\hat{R}+\Delta\hat{R}_i)^{-1}\Delta\hat{R}_i = x_i^{\top}(I+\hat{R}^{-1}\Delta\hat{R}_i)^{-1}\hat{R}^{-1}\Delta\hat{R}_i = x_i^{\top}\Delta\breve{R}_i,
\end{equation}
where
\begin{equation}
\Delta\breve{R}_i = (I+\hat{R}^{-1}\Delta\hat{R}_i)^{-1}\hat{R}^{-1}\Delta\hat{R}_i.
\end{equation}
Here, $\|\hat{R}^{-1}\Delta\hat{R}_i\|_2$ can be bounded using (\ref{eq:Rinvbound_B}) and (\ref{eq:DeltaRibound_B}) as
\begin{eqnarray}
\|\hat{R}^{-1}\Delta\hat{R}_i\|_2 \le \|\hat{R}^{-1}\|_2\|\Delta\hat{R}_i\|_2
&\le& \frac{1.1n\sqrt{n}{\uu}\|X\|_2\sqrt{\|B\|_2}}{\sqrt{\sigma_n(X^{\top}BX)+0.9s}} \nonumber \\
&\le& \frac{1.1n\sqrt{n}{\uu}\|X\|_2\sqrt{\|B\|_2}}{\sqrt{0.9\cdot11n(n+1){\uu}\|X\|_2^2\|B\|_2}} \nonumber \\
&\le& \sqrt{\frac{1.21}{9.9}\cdot n{\bf u}} \le 0.05.
\end{eqnarray}
Thus, we can rewrite $\Delta\breve{R}_i$ using the Neumann expansion as $\Delta\breve{R}_i=\sum_{k=1}^{\infty}(\hat{R}^{-1}\Delta\hat{R}_i)^k$. Hence,
\begin{eqnarray}
\|\Delta\breve{R}_i\|_2 &\le& \sum_{k=1}^{\infty}(\|\hat{R}^{-1}\|_2\|\Delta\hat{R}_i\|_2)^k \nonumber \\
&=& \frac{\|\hat{R}^{-1}\|_2\|\Delta\hat{R}_i\|_2}{1-\|\hat{R}^{-1}\|_2\|\Delta\hat{R}_i\|_2} \nonumber \\
&\le& \frac{1}{1-0.05}\cdot\frac{1.1n\sqrt{n}{\uu}\|X\|_2\sqrt{\|B\|_2}}{\sqrt{\sigma_n(X^{\top}BX)+0.9s}} \le \frac{1.2n\sqrt{n}{\uu}\|X\|_2\sqrt{\|B\|_2}}{\sqrt{\sigma_n(X^{\top}BX)+0.9s}}.
\label{eq:Rbrevebound_B}
\end{eqnarray}
Together with the fact $\|\Delta x_i^{\top}\|\le\|x_i^{\top}\|\,\|\Delta\breve{R}_i\|_2$, 
we can bound 
$\|\Delta X\|_F$ as 
\begin{equation}
\|\Delta X\|_F = \sqrt{\sum_{i=1}^m\|\Delta x_i^{\top}\|^2} \le \sqrt{\sum_{i=1}^m\|x_i^{\top}\|^2}\cdot\max_{1\le i\le m}\|\Delta\breve{R}_i\|_2 \le \frac{1.2n^2{\uu}\|X\|_2^2\sqrt{\|B\|_2}}{\sqrt{\sigma_n(X^{\top}BX)+0.9s}}, 
\label{eq:DeltaXbound_B}
\end{equation}
where we used $\sqrt{\sum_{i=1}^m\|x_i^{\top}\|^2} = \|X\|_F\le \sqrt{n}\|X\|_2$ for the last inequality.

\subsubsection{Bounding $\|\hat{Q}\|_2$}
We now proceed to bound $\|\hat{Q}\|_2$. 
\begin{lemma}
\label{lemma51}
Suppose that $X\in\mathbb{R}^{m\times n}$
with $m\ge n$ satisfies~\eqref{eq:assumption_m_B} and~\eqref{eq:assumption_n_B}. Then, the matrix $\hat{Q}$ obtained by applying the \shiftcholqr\ algorithm in floating-point arithmetic to $X$ satisfies
$$
\|\hat{Q}^{\top}B\hat{Q}-I\|_2 < 2, 
$$
hence 
\begin{equation}
\|B^{\frac{1}{2}}\hat{Q}\|_2<\sqrt{3}.
\label{eq:hatQbound_B}
\end{equation}
Moreover,
\begin{equation}
\|\hat{Q}\|_2 \le \frac{\sqrt{3}}{\sqrt{\sigma_n(B)}}.
\label{eq:hatQbound2_B}
\end{equation}

\end{lemma}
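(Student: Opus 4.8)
The plan is to mirror the proof of Lemma~\ref{lemma1}, the standard-inner-product analogue, but to carry the weight matrix $B$ through every term by splitting $B=B^{\frac{1}{2}}B^{\frac{1}{2}}$, so that the $\|B\|_2$-factors that appear in the error bounds cancel against the $\|B\|_2$-scaling already built into the assumption~\eqref{eq:assumption_s_B}. Starting from $\hat{Q}=(X+\Delta X)\hat{R}^{-1}$ (from~\eqref{eq:backward2X_B}), I would expand
\[
\hat{Q}^{\top}B\hat{Q}=\hat{R}^{-\top}(X+\Delta X)^{\top}B(X+\Delta X)\hat{R}^{-1}.
\]
Substituting $X^{\top}BX=\hat{R}^{\top}\hat{R}-sI-E_1-E_2$ from~\eqref{eq:shiftchol_B} turns the leading block into $I-\hat{R}^{-\top}(sI+E_1+E_2)\hat{R}^{-1}$, while the three remaining blocks are the two symmetric cross terms $(B^{\frac{1}{2}}X\hat{R}^{-1})^{\top}(B^{\frac{1}{2}}\Delta X\hat{R}^{-1})$ and its transpose, together with the quadratic term $(B^{\frac{1}{2}}\Delta X\hat{R}^{-1})^{\top}(B^{\frac{1}{2}}\Delta X\hat{R}^{-1})$. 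The point of writing the cross and quadratic terms this way is that they are expressed purely through $B^{\frac{1}{2}}X\hat{R}^{-1}$, already bounded by $1.5$ in~\eqref{eq:XRinvbound_B}, and $B^{\frac{1}{2}}\Delta X\hat{R}^{-1}$, whose norm I would bound by $\sqrt{\|B\|_2}\,\|\Delta X\|_F\|\hat{R}^{-1}\|_2$ using $\|B^{\frac{1}{2}}\|_2=\sqrt{\|B\|_2}$.

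Next I would bound the four blocks separately. The first block gives $\|\hat{R}^{-1}\|_2^2(s+\|E_1\|_2+\|E_2\|_2)\le 1.1s/(\sigma_n(X^{\top}BX)+0.9s)\le 1.1/0.9$ by~\eqref{eq:E1E2bound_B} and~\eqref{eq:Rinvbound_B}, exactly as in the standard case. The two cross terms contribute at most $2\cdot 1.5\cdot\sqrt{\|B\|_2}\,\|\Delta X\|_F\|\hat{R}^{-1}\|_2$; inserting~\eqref{eq:Rinvbound_B} and~\eqref{eq:DeltaXbound_B} yields $3.6\,n^2\uu\|X\|_2^2\|B\|_2/(\sigma_n(X^{\top}BX)+0.9s)$, and here the crucial step is that~\eqref{eq:assumption_s_B} forces $n^2\uu\|X\|_2^2\|B\|_2\le s/11$, leaving the dimensionless bound $4/11$. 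The quadratic term is $\le\|\hat{R}^{-1}\|_2^2\|B\|_2\|\Delta X\|_F^2\le (1.2/11)^2/0.81=16/1089$ by the same substitution. Summing $1.1/0.9+4/11+16/1089<2$ establishes $\|\hat{Q}^{\top}B\hat{Q}-I\|_2<2$.

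The two stated conclusions then follow directly. For~\eqref{eq:hatQbound_B} I would use $\|B^{\frac{1}{2}}\hat{Q}\|_2^2=\|\hat{Q}^{\top}B\hat{Q}\|_2\le 1+\|\hat{Q}^{\top}B\hat{Q}-I\|_2<3$, and for~\eqref{eq:hatQbound2_B} I would write $\hat{Q}=B^{-\frac{1}{2}}(B^{\frac{1}{2}}\hat{Q})$ and apply $\|B^{-\frac{1}{2}}\|_2=1/\sqrt{\sigma_n(B)}$ together with the bound just obtained, giving $\|\hat{Q}\|_2\le\sqrt{3}/\sqrt{\sigma_n(B)}$.

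The main obstacle is purely the bookkeeping of the $\|B\|_2$-factors: one must check that every occurrence of $\|B\|_2$ coming from $E_1$, $E_2$, $\Delta X$, and the explicit $B$ in the quadratic term is matched by the $\|B\|_2$ sitting inside the shift lower bound in~\eqref{eq:assumption_s_B}, so that the final constants reduce to precisely those of Lemma~\ref{lemma1} and no spurious dependence on $\kappa_2(B)$ survives in this orthogonality bound. The factor $\sqrt{\kappa_2(B)}$ re-enters only afterward, through the second conclusion~\eqref{eq:hatQbound2_B}, via the $1/\sqrt{\sigma_n(B)}$ in $\|B^{-\frac{1}{2}}\|_2$.
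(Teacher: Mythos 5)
Your proposal is correct, and for the central estimate it is essentially the paper's own proof: the same expansion of $\hat{Q}^{\top}B\hat{Q}$ via \eqref{eq:backward2X_B} and \eqref{eq:shiftchol_B}, the same grouping into the shift/error block, the two cross terms written through $B^{\frac{1}{2}}X\hat{R}^{-1}$ and $B^{\frac{1}{2}}\Delta X\hat{R}^{-1}$, and the quadratic term, with identical constants $1.1/0.9$, $4/11$ and $16/1089$ obtained by absorbing every $\|B\|_2$ factor into the lower bound on $s$ from \eqref{eq:assumption_s_B}. The only place you genuinely depart from the paper is the derivation of \eqref{eq:hatQbound2_B} from \eqref{eq:hatQbound_B}: the paper takes eigendecompositions $B=VDV^{\top}$ and $\hat{Q}^{\top}B\hat{Q}=U\Lambda U^{\top}$, constructs an orthogonal matrix $W=D^{\frac{1}{2}}V^{\top}\hat{Q}U\Lambda^{-\frac{1}{2}}$, and reads off $\|\hat{Q}\|_2\le\|D^{-\frac{1}{2}}\|_2\|\Lambda^{\frac{1}{2}}\|_2$, whereas you simply write $\hat{Q}=B^{-\frac{1}{2}}\bigl(B^{\frac{1}{2}}\hat{Q}\bigr)$ and invoke submultiplicativity with $\|B^{-\frac{1}{2}}\|_2=1/\sqrt{\sigma_n(B)}$. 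Your route is shorter and yields the same bound; the paper's construction proves the same fact in a more roundabout way (its orthogonal factor $W$ is not reused elsewhere), so nothing is lost by your simplification.
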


\begin{proof}
We have 
\begin{eqnarray}
\hat{Q}^{\top}B\hat{Q} &=& \hat{R}^{-\top}(X+\Delta X)^{\top}B(X+\Delta X)\hat{R}^{-1} \nonumber \\
&=& I - \hat{R}^{-\top}(sI+E_1+E_2)\hat{R}^{-1} + (X\hat{R}^{-1})^{\top}B\Delta X\hat{R}^{-1} \nonumber \\
&& \quad\quad  + \hat{R}^{-\top}\Delta X^{\top}B(X\hat{R}^{-1}) + \hat{R}^{-\top}\Delta X^{\top}B\Delta X\hat{R}^{-1}. \nonumber
\end{eqnarray}
Thus we can bound $\|\hat{Q}^{\top}B\hat{Q}-I\|_2$ as 
\begin{eqnarray}
\|\hat{Q}^{\top}B\hat{Q}-I\|_2 
&\le& \|\hat{R}^{-1}\|_2^2(s+\|E_1\|_2+\|E_2\|_2) \nonumber \\
&& + 2\|\hat{R}^{-1}\|_2\sqrt{\|B\|_2}\,\|B^{\frac{1}{2}}X\hat{R}^{-1}\|_2\|\Delta X\|_F \nonumber \\
&& + \|\hat{R}^{-1}\|_2^2\|\Delta X\|_F^2\|B\|_2.
\label{eq:Th1-2_B}
\end{eqnarray}

The first term of~\eqref{eq:Th1-2_B} can be bounded as
\begin{equation}
\|\hat{R}^{-1}\|_2^2(sI+\|E_1\|_2+\|E_2\|_2) \le \frac{1.1s}{\sigma_n(X^{\top}BX)+0.9s} \le \frac{1.1}{0.9}.
\end{equation}
and for the second term in \eqref{eq:Th1-2_B}, using \eqref{eq:Rinvbound_B}, \eqref{eq:XRinvbound_B} and \eqref{eq:DeltaXbound_B} we obtain 
\begin{eqnarray}
&& 2\|\hat{R}^{-1}\|_2\sqrt{\|B\|_2}\,\|B^{\frac{1}{2}}X\hat{R}^{-1}\|_2\|\Delta X\|_F \nonumber \\
&&\le 2\cdot\frac{1}{\sqrt{\sigma_n(X^{\top}BX)+0.9s}}\cdot\sqrt{\|B\|_2}\cdot 1.5\cdot\frac{1.2n^2{\uu}\|X\|_2^2\sqrt{\|B\|_2}}{\sqrt{\sigma_n(X^{\top}BX)+0.9s}} \nonumber \\
&&\le \frac{2\cdot 1.5\cdot 1.2\cdot\frac{1}{11}s}{0.9s}=\frac{4}{11}.
\end{eqnarray}
For the third term in \eqref{eq:Th1-2_B}, from 
\eqref{eq:Rinvbound_B} and \eqref{eq:DeltaXbound_B}
\begin{eqnarray}
\|\hat{R}^{-1}\|_2^2\|\Delta X\|_F^2\|B\|_2
&\le& \frac{1}{\sigma_n(X^{\top}BX)+0.9s}\cdot\frac{(1.2n^2{\uu}\|X\|_2^2\sqrt{\|B\|_2})^2}{\sigma_n(X^{\top}BX)+0.9s}\cdot\|B\|_2 \nonumber \\
&\le& \frac{(1.2\cdot\frac{1}{11}s)^2}{(0.9s)^2} = \frac{16}{1089}.
\end{eqnarray}
Summarizing, we can bound the right-hand side of \eqref{eq:Th1-2_B} as 
$\|\hat{Q}^{\top}B\hat{Q}-I\|_2 < 2, $
as required. 

To derive \eqref{eq:hatQbound2_B}, let $B=VDV^{\top}$ and $\hat{Q}^{\top}B\hat{Q}=U\Lambda U^{\top}$ be the symmetric eigenvalue decompositions of $B$ and $\hat{Q}^{\top}B\hat{Q}$, respectively. Then, from $\hat{Q}^{\top}VDV^{\top}\hat{Q}=U\Lambda U^{\top}$, we have
\begin{equation}
\Lambda^{-\frac{1}{2}}U^{\top}\hat{Q}^{\top}VDV^{\top}\hat{Q}U\Lambda^{-\frac{1}{2}}=I.
\end{equation}
Hence there exists an orthogonal matrix $W$ such that
\begin{equation}
D^{\frac{1}{2}}V^{\top}\hat{Q}U\Lambda^{-\frac{1}{2}}=W.
\end{equation}
Noting that $\|\Lambda\|_2<3$ and $\|D^{-1}\|_2=(\sigma_n(B))^{-1}$, we can bound $\|\hat{Q}\|_2$ as
\begin{equation}
\|\hat{Q}\|_2 \le \|D^{-\frac{1}{2}}\|_2\|\Lambda^{\frac{1}{2}}\|_2 \le \sqrt{3}/\sqrt{\sigma_n(B)}.
\end{equation}
\end{proof}

\subsection{Bounding the residual}
We now bound the residual.
\begin{lemma}\label{lemma3_B}
Suppose that $X\in\mathbb{R}^{m\times n}$
with $m\ge n$ satisfies~\eqref{eq:assumption_m_B} and~\eqref{eq:assumption_n_B}. 
Then, the matrices $\hat{Q},\hat{R}$ obtained by Algorithm~\ref{alg1B} 
in floating-point arithmetic to $X$ satisfies
  \begin{equation}
    \label{eq:Bres1_B}
\frac{\norm{\hat{Q}\hat{R}-X}_{F}}{\norm{X}_{2}}
\leq 
2n^{2}{\uu}\sqrt{\kappa_2(B)},
  \end{equation}
and 
  \begin{equation}
    \label{eq:Bres2_B}
\norm{\hat{Q}\hat{R}-X}_{F}\leq \gamma_n \sqrt{n}\|\hat Q\|_F\|\hat R\|_2.    
  \end{equation}
\end{lemma}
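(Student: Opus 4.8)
The plan is to mirror the proof of Lemma~\ref{lemma3} for the standard inner product, substituting the oblique-inner-product bounds established above. The starting point will be identical: from the defining relation~\eqref{eq:backward2_B}, namely $\hat q_i^{\top}=x_i^{\top}(\hat R+\Delta\hat R_i)^{-1}$, we get $x_i^{\top}=\hat q_i^{\top}\hat R+\hat q_i^{\top}\Delta\hat R_i$, so that the per-row residual satisfies
\[
\|\hat q_i^{\top}\hat R-x_i^{\top}\|=\|\hat q_i^{\top}\Delta\hat R_i\|\le\|\hat q_i^{\top}\|\,\|\Delta\hat R_i\|_2.
\]
Summing the squares over $i=1,\dots,m$ and recognizing $\|\hat Q\|_F=\sqrt{\sum_i\|\hat q_i^{\top}\|^2}$ reduces both bounds to controlling $\|\Delta\hat R_i\|_2$ uniformly in $i$.

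For the general bound~\eqref{eq:Bres2_B}, I would simply insert the raw back-substitution estimate~\eqref{eq:DeltaRbound_B}, $\|\Delta\hat R_i\|_2\le\gamma_n\sqrt n\|\hat R\|_2$, which holds uniformly in $i$. Pulling this constant out of the sum gives directly
\[
\|\hat Q\hat R-X\|_F\le\sqrt{\textstyle\sum_i\|\hat q_i^{\top}\|^2}\cdot\gamma_n\sqrt n\|\hat R\|_2=\gamma_n\sqrt n\|\hat Q\|_F\|\hat R\|_2,
\]
which is exactly~\eqref{eq:Bres2_B}. No appeal to the concrete size of $\hat R$ or of $B$ is needed here, which is precisely why~\eqref{eq:Bres2_B} is stated in this unsimplified form.

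For the relative bound~\eqref{eq:Bres1_B}, I would instead insert the sharpened estimate~\eqref{eq:DeltaRibound_B}, $\|\Delta\hat R_i\|_2\le 1.1n\sqrt n\,{\uu}\|X\|_2\sqrt{\|B\|_2}$, and bound $\|\hat Q\|_F\le\sqrt n\|\hat Q\|_2$ using~\eqref{eq:hatQbound2_B}, i.e.\ $\|\hat Q\|_2\le\sqrt3/\sqrt{\sigma_n(B)}$, so that $\|\hat Q\|_F\le\sqrt{3n}/\sqrt{\sigma_n(B)}$. Multiplying these two bounds, the factor $\sqrt{\|B\|_2}$ coming from the back-substitution error and the factor $1/\sqrt{\sigma_n(B)}$ coming from $\|\hat Q\|_F$ combine into $\sqrt{\|B\|_2/\sigma_n(B)}=\sqrt{\kappa_2(B)}$, and the numerical prefactor $1.1\sqrt3\,n^2<2n^2$ yields~\eqref{eq:Bres1_B} after dividing through by $\|X\|_2$.

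The only genuinely new feature relative to the standard case is the emergence of $\sqrt{\kappa_2(B)}$, and the point I would be most careful to get right is the bookkeeping of where the two halves of this factor originate: $\sqrt{\|B\|_2}$ is inherited through $\|\Delta\hat R_i\|_2$ (because the computed Gram matrix $X^{\top}BX$ carries the scale $\|B\|_2$), whereas $1/\sqrt{\sigma_n(B)}$ enters only through the column-space bound $\|\hat Q\|_2\le\sqrt3/\sqrt{\sigma_n(B)}$ established in Lemma~\ref{lemma51}. Beyond tracking this factor, everything is a verbatim transcription of the standard-inner-product argument, so I do not expect any step to present a substantive obstacle.
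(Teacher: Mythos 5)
Your proposal is correct and follows essentially the same route as the paper's own proof: the identical row-wise residual identity from \eqref{eq:backward2_B}, the raw bound \eqref{eq:DeltaRbound_B} for \eqref{eq:Bres2_B}, and the sharpened bound \eqref{eq:DeltaRibound_B} together with $\|\hat Q\|_F\le\sqrt{3n}/\sqrt{\sigma_n(B)}$ from \eqref{eq:hatQbound2_B} for \eqref{eq:Bres1_B}, with the same recombination $\sqrt{\|B\|_2}/\sqrt{\sigma_n(B)}=\sqrt{\kappa_2(B)}$ and prefactor check $1.1\sqrt{3}<2$. No gaps.
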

\begin{proof}
First note that
\begin{equation}  \label{eq:qrx_B}
\norm{\hat q_{i}^{\top}\hat{R} -x_{i}^{\top}}=\norm{\hat q_{i}^{\top}\hat{R} -\hat q_{i}^{\top}(\hat{R} +\Delta \hat{R} _{i})}\le \norm{\hat q_{i}^{\top}\Delta \hat{R} _{i}}\le \norm{\hat q_{i}^{\top}}\norm{\Delta \hat{R} _{i}}_2.
\end{equation}
Substituting \eqref{eq:DeltaRibound_B} into this gives
\begin{equation}
\|\hat{q}_i^{\top}\hat{R}-x_i^{\top}\| \le \|\hat{q}_i^{\top}\|\cdot 1.1n\sqrt{n}{\uu}\|X\|_2\sqrt{\|B\|_2}.
\end{equation}
On the other hand, from~\eqref{eq:hatQbound2_B} we have 
\begin{equation}
\|\hat{Q}\|_F < \sqrt{3n}/\sqrt{\sigma_n(B)},
\end{equation}
so it follows that 
\begin{eqnarray}
\|\Delta X\|_F &=& \|\hat{Q}\hat{R}-X\|_F = \sqrt{\sum_{i=1}^m\|\hat{q}_i^{\top}\hat{R}-x_i^{\top}\|^2} \nonumber \\
&\le& \sqrt{\sum_{i=1}^m\|\hat{q}_i^{\top}\|^2}\cdot 1.1n\sqrt{n}{\uu}\|X\|_2\sqrt{\|B\|_2} = \|\hat{Q}\|_F\cdot 1.1n\sqrt{n}{\uu}\|X\|_2\sqrt{\|B\|_2} \nonumber \\
&\le& 2n^2{\uu}\|X\|_2\sqrt{\kappa_2(B)}.
\label{eq:residual_B}
\end{eqnarray}
To obtain the second bound in the statement, we use~\eqref{eq:DeltaRbound_B} in~\eqref{eq:qrx_B} to obtain
$\norm{\hat q_{i}^{\top}\hat{R} -x_{i}^{\top}}\leq 
\gamma_n \sqrt{n}\norm{\hat q_{i}^{\top}}\norm{\hat{R}}_2$, hence 
\[\norm{\hat{Q}\hat{R}-X}_{F}^2
\leq \gamma_n^2 n\sum_{i=1}^m\|\hat q_i^\top \|^2\|\hat R\|_2^2
=\gamma_n^2 n\|\hat Q\|_F^2\|\hat R\|_2^2
,\]
as required.
\end{proof}

Lemma~\ref{lemma3_B}, in particular~\eqref{eq:Bres2_B}, shows that \shiftcholqr\ gives optimal residual up to a factor bounded by a low-degree polynomial of $m,n$.

\subsection{Main result}
We are now ready to state the main result of the section, which bounds the quantity $\|\hat{Q}\|_2\sqrt{\|B\|_2}/\sqrt{\sigma_n(\hat{Q}^{\top}B\hat{Q})}$. Note that this is a $B$-orthonormality measure for $\hat{Q}$, reducing to $\kappa_2(\hat{Q})$ when $B=I$. 

\begin{theorem} \label{eq:kap2_B}
With one step of \shiftcholqr\ 
  applied in double precision arithmetic 
to $X$ satisfying~\eqref{eq:assumption_kappa_B}--\eqref{eq:assumption_n_B} 
with
  shift $s$ satisfying~\eqref{eq:assumption_s_B}, 
 we obtain $\hat Q$ with 
(defining $\alpha = \frac{s}{\|X\|_2^2\|B\|_2}$)
\begin{equation}
\label{eq:condimprove_B}
\frac{\|\hat{Q}\|_2\sqrt{\|B\|_2}}{\sqrt{\sigma_n(\hat{Q}^{\top}B\hat{Q})}} \leq 
2\sqrt{3}\cdot\sqrt{1+\alpha\cdot\frac{\|X\|_2^2\|B\|_2}{\sigma_n(X^{\top}BX)}}\cdot\sqrt{\kappa_2(B)}
\end{equation}
\end{theorem}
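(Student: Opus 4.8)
The plan is to mirror the proof of Theorem~\ref{eq:kap2}, writing the target quantity as a ratio whose numerator $\|\hat Q\|_2\sqrt{\|B\|_2}$ is controlled from above by Lemma~\ref{lemma51} and whose denominator $\sqrt{\sigma_n(\hat Q^\top B\hat Q)}=\sigma_n(B^{\frac12}\hat Q)$ must be bounded from below. For the numerator, \eqref{eq:hatQbound2_B} immediately gives $\|\hat Q\|_2\sqrt{\|B\|_2}\le \sqrt{3}\sqrt{\|B\|_2/\sigma_n(B)}=\sqrt{3}\sqrt{\kappa_2(B)}$, so all the remaining work is the lower bound on $\sigma_n(B^{\frac12}\hat Q)$, the $B$-weighted analogue of bounding $\sigma_n(\hat Q)$ in the standard case.

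For that, I would start from $B^{\frac12}\hat Q=B^{\frac12}X\hat R^{-1}+B^{\frac12}\Delta X\hat R^{-1}$ (using \eqref{eq:backward2X_B}) and apply Weyl's theorem to get $\sigma_n(B^{\frac12}\hat Q)\ge \sigma_n(B^{\frac12}X\hat R^{-1})-\|B^{\frac12}\Delta X\hat R^{-1}\|_2$. The perturbation term is routine: $\|B^{\frac12}\Delta X\hat R^{-1}\|_2\le \sqrt{\|B\|_2}\,\|\Delta X\|_F\,\|\hat R^{-1}\|_2$, which I bound using the tight residual bound \eqref{eq:Bres1_B} and \eqref{eq:Rinvbound_B}, producing a term of size $O\!\big(n^2\uu\|X\|_2\sqrt{\|B\|_2}\sqrt{\kappa_2(B)}/\sqrt{\sigma_n(X^\top BX)}\big)$.

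The crux is the lower bound on $\sigma_n(B^{\frac12}X\hat R^{-1})$, obtained by repeating the SVD argument of Theorem~\ref{eq:kap2} with $X$ replaced by $B^{\frac12}X$. Writing $B^{\frac12}X=U\Sigma V^\top$ and $G=(\Sigma^2+sI)^{\frac12}-\Sigma$ gives $X^\top BX+sI=(U(\Sigma+G)V^\top)^\top(U(\Sigma+G)V^\top)$, so with $T:=U(\Sigma+G)V^\top\hat R^{-1}$ and \eqref{eq:shiftchol_B} we get $T^\top T=I-\hat R^{-\top}(E_1+E_2)\hat R^{-1}$; then \eqref{eq:E1E2bound_B} and \eqref{eq:Rinvbound_B} force $\sigma_i(T)\in[0.9,1.1]$, and factoring $T=Q'(I+E)$ with $\|E\|_2\le 0.1$ yields $\Sigma V^\top\hat R^{-1}=\mathrm{diag}\!\big(\sigma_i/\sqrt{\sigma_i^2+s}\big)U^\top Q'(I+E)$, where $\sigma_i=\sigma_i(B^{\frac12}X)$ and $\sigma_i^2=\sigma_i(X^\top BX)$. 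The one delicate point, glossed over in the $B=I$ case, is that both factorizations $T=U[(\Sigma+G)V^\top\hat R^{-1}]$ and $T=Q'(I+E)$ have invertible right factors, so $\mathrm{range}(U)=\mathrm{range}(T)=\mathrm{range}(Q')$; hence $U^\top Q'$ is a genuine $n\times n$ orthogonal matrix with $\sigma_{\min}(U^\top Q')=1$, and the product inequality $\sigma_{\min}(AB)\ge\sigma_{\min}(A)\sigma_{\min}(B)$ gives $\sigma_n(B^{\frac12}X\hat R^{-1})\ge 0.9\,\sqrt{\sigma_n(X^\top BX)}/\sqrt{\sigma_n(X^\top BX)+s}$.

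Finally I would combine the two bounds exactly as in Theorem~\ref{eq:kap2}: pull out $0.9/\sqrt{\sigma_n(X^\top BX)+s}$, use $\sqrt{\sigma_n(X^\top BX)+0.9s}\ge\sqrt{0.9}\sqrt{\sigma_n(X^\top BX)+s}$ on the perturbation term, and invoke \eqref{eq:assumption_kappa_B} in the form $2n^2\uu\|X\|_2\sqrt{\|B\|_2}\sqrt{\kappa_2(B)}<\tfrac13\sqrt{\sigma_n(X^\top BX)}$ to absorb the subtracted term, leaving a constant $\ge\tfrac12$. This yields $\sigma_n(B^{\frac12}\hat Q)\ge \tfrac12\sqrt{\sigma_n(X^\top BX)}/\sqrt{\sigma_n(X^\top BX)+s}$, and dividing the numerator bound by this gives $2\sqrt{3}\sqrt{\kappa_2(B)}\sqrt{1+s/\sigma_n(X^\top BX)}$, which is \eqref{eq:condimprove_B} once $s/\sigma_n(X^\top BX)=\alpha\,\|X\|_2^2\|B\|_2/\sigma_n(X^\top BX)$ is substituted. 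The main obstacle is bookkeeping the $\sqrt{\|B\|_2}$ and $\sqrt{\kappa_2(B)}$ factors consistently through the Weyl and SVD steps; the only genuinely new idea relative to the $B=I$ proof is the range-equality argument certifying $\sigma_{\min}(U^\top Q')=1$.
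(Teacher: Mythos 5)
Your proposal is correct and follows essentially the same route as the paper's proof: it bounds the numerator via Lemma~\ref{lemma51}, applies Weyl's theorem to $B^{\frac12}\hat Q = B^{\frac12}X\hat R^{-1} + B^{\frac12}\Delta X\hat R^{-1}$ with the perturbation term controlled by \eqref{eq:Bres1_B} and \eqref{eq:Rinvbound_B}, and obtains the lower bound on $\sigma_n(B^{\frac12}X\hat R^{-1})$ through the identical SVD construction $T = U(\Sigma+G)V^{\top}\hat R^{-1} = W(I+E)$ followed by the same absorption of the error term using \eqref{eq:assumption_kappa_B}. Your explicit range-equality argument certifying $\sigma_{\min}(U^{\top}Q')=1$ makes rigorous a step the paper applies silently (in both the $B=I$ and oblique cases), so it is a welcome clarification rather than a deviation.
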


\begin{proof}  
Recall from~\eqref{eq:hatQbound2_B} that $\|\hat{Q}\|_2<\sqrt{3}/\sqrt{\sigma_n(B)}$. The remaining task is to bound $\sigma_n(B^{\frac{1}{2}}\hat{Q})$ from below. Note that $B^{\frac{1}{2}}\hat{Q}= B^{\frac{1}{2}}X\hat{R}^{-1}+B^{\frac{1}{2}}\Delta X\hat{R}^{-1}$ from \eqref{eq:backward2X_B}. Using Weyl's theorem gives
\begin{equation}  \label{eq:weylg_B}
\sigma_{n}(B^{\frac{1}{2}}\hat{Q}) \ge \sigma_{n}(B^{\frac{1}{2}}X\hat{R}^{-1})-\norm{B^{\frac{1}{2}}\Delta X \hat{R}^{-1}}_{2}.    
\end{equation}
Using \eqref{eq:Rinvbound_B} and \eqref{eq:residual_B} we obtain
\begin{equation}
  \|B^{\frac{1}{2}}\Delta X\hat{R}^{-1}\|_2 \le \sqrt{\|B\|_2}\|\Delta X\|_F\|\hat{R}^{-1}\|_2 \le \frac{2n^2{\uu}\|X\|_2\sqrt{\|B\|_2}\sqrt{\kappa_2(B)}}{\sqrt{\sigma_n(X^{\top}BX)+0.9s}}.
  \label{eq:DeltaXRinvbound_B}
\end{equation}
Note that this is $O(\uu^{\frac{1}{2}}\sqrt{\kappa_2(B)})$ when we regard low-degree polynomials in $m,n$ as constants.

We next bound $\sigma_n(B^{\frac{1}{2}}X\hat{R}^{-1})$ from below.
We start with the equation
\begin{equation}
\hat{R}^{-\top}(X^{\top}BX+sI)\hat{R}^{-1}
= I - \hat{R}^{-\top}(E_1+E_2)\hat{R}^{-1}.
\end{equation}
Let
$B^{\frac{1}{2}}X=U\Sigma V^{\top}$ by the SVD. Then for a diagonal matrix $G$, we
can write
\begin{equation} X^{\top}BX + sI =
  (U(\Sigma+G)V^{\top})^{\top}(U(\Sigma+G)V^{\top}).
\end{equation}
Indeed the left-hand side is $V(\Sigma^2+sI)V^{\top}$ and the
right-hand side $V(\Sigma+G)^2V^{\top}$, so we can take
\begin{equation} G=(\Sigma^2+sI)^{\frac{1}{2}}-\Sigma = {\rm
    diag}\left(\sqrt{\sigma_{i}(X^{\top}BX)+s}-\sqrt{\sigma_{i}(X^{\top}BX)}\right) \end{equation} Now
setting $T=U(\Sigma+G)V^{\top}\hat{R}^{-1}$ we have
\begin{equation}
T^{\top}T = \hat{R}^{-\top}V(\Sigma+G)^2V^{\top}\hat{R}^{-1} = \hat{R}^{-\top}(X^{\top}BX+sI)\hat{R}^{-1}= I -
  \hat{R}^{-\top}(E_1+E_2)\hat{R}.
\end{equation}
We next bound the singular values of $T$. By \eqref{eq:E1E2bound_B} and
\eqref{eq:Rinvbound_B} we have
\begin{equation}
  \|\hat{R}^{-\top}(E_1+E_2)\hat{R}^{-1}\|_2 \le \|\hat{R}^{-1}\|_2^2(\|E_1\|_2+\|E_2\|_2) \le \frac{0.1s}{\sigma_n(X^{\top}BX)+0.9s} \le \frac{1}{9},
\end{equation}
so
$\sigma_i(T)\in[\sqrt{1-\frac{1}{9}},\sqrt{1+\frac{1}{9}}]\subseteq
[0.9,1.1]$. Therefore, letting $T = U'(I+E')V'$ be the SVD where $E'$ is diagonal, we have
\begin{equation} T =
  U^{\prime}(I+E^{\prime})V^{\prime\top}=U^{\prime}V^{\prime\top}(I+V^{\prime}E^{\prime}V^{\prime\top})
  = W(I+E), \label{eq:IplusE_B} \end{equation}
where $W=U^{\prime}V^{\prime\top}$ has orthonormal columns, and 
$\|E\|_2=\|V^{\prime}E^{\prime}V^{\prime\top}\|_2=\|E^{\prime}\|_2\le 0.1$.

Now plugging into~\eqref{eq:IplusE_B} the definition of $T$ gives 
\begin{eqnarray} 
W(I+E) &=&U(\Sigma+G)V^{\top}\hat{R}^{-1}.
\end{eqnarray}
Recalling that $B^{\frac{1}{2}}X=U\Sigma V^\top$, 
we have $\sigma_i(B^{\frac{1}{2}}X\hat{R}^{-1})=\sigma_i(\Sigma V^\top\hat{R}^{-1})$
and 
\begin{eqnarray}
\Sigma V^\top \hat{R}^{-1} &=&
  \Sigma(\Sigma+G)^{-1} U^{\top}W(I+E) \nonumber \\ &=& {\rm
    diag}\left(\frac{\sqrt{\sigma_i(X^{\top}BX)}}{\sqrt{\sigma_i(X^{\top}BX)+s}}\right)U^{\top}W(I+E).
\end{eqnarray}
Using the general inequality for singular values of matrix products~$\sigma_{\min}(AB)\ge\sigma_{\min}(A)\sigma_{\min}(B)$ (applicable if $A$ or $B$ is square) we obtain
\begin{equation}
\sigma_n(B^{\frac{1}{2}}X\hat{R}^{-1}) \ge \frac{\sqrt{\sigma_n(X^{\top}BX)}}{\sqrt{\sigma_n(X^{\top}BX)+s}}\cdot 0.9.
\label{eq:XRinvbound2_B}
\end{equation}
By inserting this and~\eqref{eq:DeltaXRinvbound_B} into \eqref{eq:weylg_B}, we obtain
\begin{eqnarray}
\sigma_n(B^{\frac{1}{2}}\hat{Q})
&\ge& \frac{0.9\sqrt{\sigma_n(X^{\top}BX)}}{\sqrt{\sigma_n(X^{\top}BX)+s}} - \frac{2n^2{\uu}\|X\|_2\sqrt{\|B\|_2}\sqrt{\kappa_2(B)}}{\sqrt{\sigma_n(X^{\top}BX)+0.9s}} \nonumber \\
&\ge& \frac{0.9\sqrt{\sigma_n(X^{\top}BX)}}{\sqrt{\sigma_n(X^{\top}BX)+s}} - \frac{2n^2{\uu}\|X\|_2\sqrt{\|B\|_2}\sqrt{\kappa_2(B)}}{\sqrt{0.9}\sqrt{\sigma_n(X^{\top}BX)+s}} \nonumber \\
&=& \frac{0.9}{\sqrt{\sigma_n(X^{\top}BX)+s}}\left(\sqrt{\sigma_n(X^{\top}BX)}-\frac{2}{0.9\sqrt{0.9}}\cdot n^2{\uu}\|X\|_2\sqrt{\|B\|_2}\sqrt{\kappa_2(B)}\right) \nonumber \\
&\ge& \frac{0.9}{\sqrt{\sigma_n(X^{\top}BX)+s}}\left(\sqrt{\sigma_n(X^{\top}BX)}-0.4\sqrt{\sigma_n(X^{\top}BX)}\right) \nonumber \\
&\ge& \frac{\sqrt{\sigma_n(X^{\top}BX)}}{2\sqrt{\sigma_n(X^{\top}BX)+s}}
= \frac{1}{2\sqrt{1+\alpha\cdot\frac{\|X\|_2^2\|B\|_2}{\sigma_n(X^{\top}BX)}}},
\end{eqnarray}
where we used \eqref{eq:assumption_kappa_B} in the fourth inequality and the definition of $\alpha$ in the last equality.

Together with \eqref{eq:hatQbound2_B} we obtain 
\begin{eqnarray}
\frac{\|\hat{Q}\|_2\sqrt{\|B\|_2}}{\sqrt{\sigma_n(\hat{Q}^{\top}B\hat{Q})}}
&\le& 2\sqrt{1+\alpha\cdot\frac{\|X\|_2^2\|B\|_2}{\sigma_n(X^{\top}BX)}}\cdot\frac{\sqrt{3}}{\sqrt{\sigma_n(B)}}\cdot\sqrt{\|B\|_2} \nonumber \\
&=& 2\sqrt{3}\cdot\sqrt{1+\alpha\cdot\frac{\|X\|_2^2\|B\|_2}{\sigma_n(X^{\top}BX)}}\cdot\sqrt{\kappa_2(B)}.
\label{eq:CNreduction_B}
\end{eqnarray}
\end{proof}

Thus we conclude that, provided that $\alpha\|X\|_2^2\|B\|_2/\sigma_n(X^{\top}BX) \gg 1$, 
\begin{equation}
\frac{\|\hat{Q}\|_2\sqrt{\|B\|_2}}{\sqrt{\sigma_n(\hat{Q}^{\top}B\hat{Q})}} \lesssim 2\sqrt{3\alpha\kappa_2(B)}\cdot\frac{\|X\|_2\sqrt{\|B\|_2}}{\sqrt{\sigma_n(X^{\top}BX)}}.
\end{equation}
Thus applying one step of \shiftcholqr\ results in 
the condition number being reduced by about a factor $\sqrt{\alpha\kappa_2(B)}=\frac{\sqrt{s\kappa_2(B)}}{\|X\|_2\sqrt{\|B\|_2}}$.

\subsection{\shiftcholqr3\ in oblique inner product}
We now describe the analogue of \shiftcholqr3\ for an oblique inner product space. 
We continue to assume \eqref{eq:assumption_kappa_B}--\eqref{eq:assumption_s_B}; a particular case of interest is $\uu^{-\frac{1}{2}}<{\|\hat{Q}\|_2\sqrt{\|B\|_2}}/{\sqrt{\sigma_n(\hat{Q}^{\top}B\hat{Q})}} <\uu^{-1}$. 

\paragraph{Choice of shift $s$}
As in Section~\ref{sec:choices}, we need to take into account the error term $E_{1}$ in \eqref{eq:forward1_B}, incurred in the computation of $A=X^{T}BX$. 
From \eqref{eq:forward1_B}, we obtain a lower bound
\begin{equation}
 \lambda_{n}(\hat{A})\ge  \lambda_{n}(X^{\top}BX) - \|E_{1}\|_2. 
\label{ineq: lower bound}
\end{equation}
Accordingly to apply \eqref{eq:delta1}, we first shift $\hat{A}$ by the upper bound in \eqref{err_1_B}. 
Thus a choice of $s$ that avoids numerical breakdown is 
\begin{eqnarray}
\tilde{s}&:=&2.2m\sqrt{mn}\uu\|X\|^{2}_{2}\|B\|_{2}
+c_{n+2}{\bf u}\mbox{tr}(\hat{A}+2.2m\sqrt{mn}\uu\|X\|^{2}_{2}\|B\|_{2} I).
\label{eq:tildesB2}
\end{eqnarray}
Together with the assumption \eqref{eq:assumption_s_B}, we obtain
\begin{equation}
s:=\max(11\{2m\sqrt{mn}+n(n+1)\}{\uu}\|X\|_2^2\|B\|_2, \tilde{s}).
\label{eq:max_sB}
\end{equation}
First we note that $\hat{A}=X^{T}BX+E_{1}$ from \eqref{eq:forward1_B}. 
As in \eqref{eq:trE1} and \eqref{eq:trXTX}, we have
\begin{eqnarray*}
\mbox{tr}(E_{1}) \le 2.2mn\sqrt{mn} \uu \|X\|^{2}_{2}\|B\|_{2}, 
\mbox{tr}(X^{\top}BX) \le n\|X^{\top}BX\|_{2} \le n \|X\|^{2}_{2}\|B\|_{2}. 
\end{eqnarray*}
Thus, we can bound $\tilde{s}$ as
\begin{eqnarray*}
\tilde{s}&:=&2.2m\sqrt{mn}\uu\|X\|^{2}_{2}\|B\|_{2} \\
&&+2.2(n+1){\bf u}\left(n \|X\|^{2}_{2}\|B\|_{2} +  2.2mn\sqrt{mn} \uu \|X\|^{2}_{2}\|B\|_{2} + 2.2mn\sqrt{mn}\uu\|X\|^{2}_{2}\|B\|_{2}\right)\\
&\le& 2.4\{m\sqrt{mn} + n(n+1)\}\uu \|X\|^{2}_{2}\|B\|_{2},
\label{eq:tildesB21}
\end{eqnarray*}
which shows that the maximum in (\ref{eq:max_sB}) is 
\begin{equation}
s:=11\{2m\sqrt{mn}+n(n+1)\}{\uu}\|X\|_2^2\|B\|_2. 
\label{eq:finds2B}
\end{equation}

\ignore{
Algorithm~\ref{algB} blends \shiftcholqr\ and CholeskyQR2 in an adaptive manner: it attempts
 be applicable to matrices with possibly ${\|\hat{Q}\|_2\sqrt{\|B\|_2}}/{\sqrt{\sigma_n(\hat{Q}^{\top}B\hat{Q})}} >\uu^{-1}$. 
The shifts are introduced only when necessary, judged by whether or not the Cholesky factorization 
$\mbox{chol}(A^{(k)})$ breaks down. 

 \begin{algorithm}[h]
 \caption{Iterated CholeskyQR in an oblique inner product for $X=QR$ with shifts when necessary.}
 \label{algB}
 \begin{algorithmic}[1]
  \STATE Let $Q:=X,R:=I$
  \REPEAT
  \STATE $A:=Q^TBQ$
  \STATE $\tilde{R}:=\mbox{chol}(A)$\quad //chol: Cholesky factorization
  \IF{$\mbox{chol}(A)$ breaks down}
  \STATE $s:=11\{2m\sqrt{mn}+n(n+1)\}{\uu}\|X\|_2^2\|B\|_2$
\quad //introduce shift
  \STATE $\tilde R:=\mbox{chol}(A+sI)$
  \ENDIF
  \STATE $Q:=Q\tilde R^{-1}$, $R:=\tilde RR$
 \UNTIL{$\norm{Q^{T}BQ-I}_{F}\le \sqrt{n}\uu\|B\|_{F}$}
    \end{algorithmic}
 \end{algorithm}
}

We now summarize the algorithm in pseudocode 
in Algorithm~\ref{alg:shiftchol2B}, which is the $B$-orthogonal analogue of Algorithm~\ref{alg:shiftchol3}. We still call the algorithm \shiftcholqr3 (supressing $B$ as the two algorithms are essentially equivalent when $B=I$, aside from a slight difference in the shift strategy). 
The iterated Algorithm~\ref{alg12} can also be extended to $B\neq I$ similarly; we omit its pseudocode for brevity. 
 In the analysis below, we focus on the case $\uu^{-\frac{1}{2}}<\|X\|_2\sqrt{\|B\|_2}/{\sqrt{\sigma_n(X^{\top}BX)}}<\uu^{-1}$. 

 \begin{algorithm}[h]
 \caption{\shiftcholqr3 for $X=QR$,  $Q^TBQ=I_n$.
}
 \label{alg:shiftchol2B}
 \begin{algorithmic}[1]
  \STATE Let $Q:=X$
  \STATE $A:=Q^TBQ$
  \STATE $s:=11\{2m\sqrt{mn}+n(n+1)\}{\uu}\|X\|_2^2\|B\|_2$
  \STATE $R:=\mbox{chol}(A+sI)$
  \STATE $Q:=QR^{-1}$
  \STATE $\tilde R:=\mbox{chol}(Q^TQ)$,  $Q:=Q\tilde R^{-1}$, $R:=\tilde RR$
  \STATE $\tilde R:=\mbox{chol}(Q^TQ)$,  $Q:=Q\tilde R^{-1}$, $R:=\tilde RR$
    \end{algorithmic}
 \end{algorithm}

\paragraph{When is thrice enough?}
Here we derive a condition on $\|\hat{Q}\|_2\sqrt{\|B\|_2}/{\sqrt{\sigma_n(\hat{Q}^{\top}B\hat{Q})}}$ that guarantees that 
\shiftcholqr3 gives a numerically stable QR factorization of $X$. 

Recall that \eqref{eq:CNreduction_B} gives a bound by \shiftcholqr: 
\begin{eqnarray}
\frac{\|\hat{Q}\|_2\sqrt{\|B\|_2}}{\sqrt{\sigma_n(\hat{Q}^{\top}B\hat{Q})}}
&\le&  2\sqrt{3}\cdot\sqrt{1+\alpha\cdot\frac{\|X\|_2^2\|B\|_2}{\sigma_n(X^{\top}BX)}}\cdot\sqrt{\kappa_2(B)}.
\end{eqnarray}
As in~\eqref{eq:finds2B}, to guarantee avoidance of breakdown we take $\alpha=11\{2m\sqrt{mn}+n(n+1)\}{\uu}$, so we have
\begin{equation}
  \label{eq:hatqcondB}
\frac{\|\hat{Q}\|_2\sqrt{\|B\|_2}}{\sqrt{\sigma_n(\hat{Q}^{\top}B\hat{Q})}}
\le  2\sqrt{3}\cdot\sqrt{1+11\{2m\sqrt{mn}+n(n+1)\}{\uu}\frac{\|X\|_2^2\|B\|_2}{\sigma_n(X^{\top}BX)}}\cdot\sqrt{\kappa_2(B)}.
\end{equation}
On the other hand, 
as shown in~\cite{yamamotojsiam2015}, 
a sufficient condition for CholeskyQR2 in an oblique inner product to compute  a stable QR factorization of $\hat{Q}$ is 
\begin{equation}\label{eq:cholqr2Bgurantee}
\frac{\|\hat{Q}\|_2\sqrt{\|B\|_2}}{\sqrt{\sigma_n(\hat{Q}^{\top}B\hat{Q})}}
\le \frac{1}{8\sqrt{(m\sqrt{mn}+n(n+1))\uu}}. 
\end{equation}
Combining these facts, we obtain the following condition under which \shiftcholqr3 is guaranteed to compute a numerically stable QR factoriziation:
\[
2\sqrt{3}\cdot\sqrt{1+11\{2m\sqrt{mn}+n(n+1)\}{\uu}\frac{\|X\|_2^2\|B\|_2}{\sigma_n(X^{\top}BX)}}\cdot\sqrt{\kappa_2(B)}
\leq\frac{1}{8\sqrt{(m\sqrt{mn}+n(n+1))\uu}}.
\]
If $\frac{\| X\|_2\sqrt{\|B\|_2}}{\sqrt{\sigma_n(X^{\top}BX)}}>\uu^{-\frac{1}{2}}$, we have 
\[
1+11\{2m\sqrt{mn}+n(n+1)\}{\bf u} \frac{\|X\|_2\sqrt{\|B\|_2}}{\sqrt{\sigma_n(X^{\top}BX)}}\simeq 11\{2m\sqrt{mn}+n(n+1)\}{\bf u} \frac{\|X\|_2\sqrt{\|B\|_2}}{\sqrt{\sigma_n(X^{\top}BX)}}
\]
 and the condition can be simplified as

\begin{equation}\label{eq:guaranteeB}
\frac{\| X\|_2\sqrt{\|B\|_2}}{\sqrt{\sigma_n(X^{\top}BX)}} \leq \frac{{\uu^{-1}}}{96\{2m\sqrt{mn}+n(n+1)\}\sqrt{\kappa_{2}(B)}}.
\end{equation}
Note that this ensures that the condition \eqref{eq:assumption_kappa_B} for the error analysis in
Subsection~\ref{sec:stabB} is automatically satisfied.

\paragraph{Numerical stability}

We now examine the numerical stability of \shiftcholqr3 and show that it enjoys excellent stability both in orthogonality and backward error. 

\begin{theorem}\label{thm:Bsta}
  Let $B\in\mathbb{R}^{m\times m}\succ 0$ and
$X\in\mathbb{R}^{m\times n}$ be a matrix satisfying~\eqref{eq:guaranteeB}, 
and $80\kappa_2(B)(m\sqrt{mn}\uu + n(n+1)\uu)\leq 1$. Then \shiftcholqr\ followed by CholeskyQR2 computes a QR factorization $X=QR$ satisfying the $B$-orthogonality measure 
\begin{eqnarray}
\norm{\hat Q^{\top}B\hat Q-I}_{F}\le8[m\sqrt{mn}\uu+n(n+1)\uu]\kappa_{2}(B), \label{boundQ:Chol2B}
\end{eqnarray}
and backward error 
  \begin{equation}    \label{eq:berrB}
\frac{\|\hat{Q}\hat{R}-X\|_F}{\|\hat{X}\|_2}
\le 16n^2{\uu}{(\kappa_{2}(B))^{3/2}}. 
  \end{equation}
\end{theorem}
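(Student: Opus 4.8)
The plan is to mirror the proof of Theorem~\ref{thm:maincholqr3}, treating the $B$-orthogonality measure \eqref{boundQ:Chol2B} and the backward error \eqref{eq:berrB} separately, the only genuinely new work being to track the powers of $\kappa_2(B)$ that enter through the oblique inner product. As in the standard case, the engine is a two-stage argument: the initial \shiftcholqr\ acts as a preconditioner that drives the $B$-orthonormality measure $\|\hat Q\|_2\sqrt{\|B\|_2}/\sqrt{\sigma_n(\hat Q^\top B\hat Q)}$ below $\uu^{-1/2}$, after which CholeskyQR2 in the oblique inner product (analyzed in~\cite{yamamotojsiam2015}) finishes the job on a well-conditioned matrix.

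For the orthogonality bound \eqref{boundQ:Chol2B} I would first invoke Theorem~\ref{eq:kap2_B} together with the shift choice \eqref{eq:finds2B}: this gives \eqref{eq:hatqcondB}, and the hypothesis \eqref{eq:guaranteeB} was precisely engineered so that the right-hand side of \eqref{eq:hatqcondB} is at most the CholeskyQR2 admissibility threshold \eqref{eq:cholqr2Bgurantee}. Consequently the matrix fed to the two CholeskyQR sweeps has $B$-orthonormality measure $\lesssim\uu^{-1/2}$, and the orthogonality of the final $\hat Q$ is governed entirely by the CholeskyQR2 analysis of~\cite{yamamotojsiam2015}, which yields exactly \eqref{boundQ:Chol2B}; the auxiliary hypothesis $80\kappa_2(B)(m\sqrt{mn}\uu+n(n+1)\uu)\le 1$ is the condition under which that analysis holds (it keeps the relevant Gram matrices positive definite and the perturbation expansions convergent). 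This step is essentially a citation, exactly as in the first paragraph of the proof of Theorem~\ref{thm:maincholqr3}.

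For the backward error \eqref{eq:berrB} I would reproduce the decomposition \eqref{eq:resid_shiftedCholesky2}. Writing the first \shiftcholqr\ step as $X+\Delta X=\hat Q\hat R$ with $\|\Delta X\|_F$ bounded by \eqref{eq:residual_B}, the CholeskyQR2 stage as $\hat Q+\Delta\hat Q=\hat Z\hat U$, and the final triangular factor as $\hat S=\mathit{fl}(\hat U\hat R)=\hat U\hat R+\Delta S$, one gets $\hat Z\hat S-X=\Delta\hat Q\hat R+\hat Z\Delta S+\Delta X$, hence $\|\hat Z\hat S-X\|_F\le\|\Delta\hat Q\|_F\|\hat R\|_2+\|\hat Z\|_2\|\Delta S\|_F+\|\Delta X\|_F$. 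I would then bound the three pieces with ingredients already in hand: $\|\Delta X\|_F\le 2n^2\uu\|X\|_2\sqrt{\kappa_2(B)}$ from \eqref{eq:residual_B}; $\|\hat R\|_2\le\sqrt{1.1}\|X\|_2\sqrt{\|B\|_2}$ from \eqref{eq:hatR2bound_B}; $\|\hat Q\|_2\le\sqrt3/\sqrt{\sigma_n(B)}$ from \eqref{eq:hatQbound2_B}; and the oblique analogues of \cite[Thm.~3.3, 3.5]{yamamotoetna2015} supplied by~\cite{yamamotojsiam2015} for $\|\Delta\hat Q\|_F$, for $\|\hat Z\|_2\le C/\sqrt{\sigma_n(B)}$, and for $\|\hat U\|_2=O(1)$ (the latter since $\hat U^\top\hat U\approx\hat Q^\top B\hat Q$ has norm $<3$). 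The $\Delta X$ and $\hat Z\Delta S$ terms each contribute $\sqrt{\kappa_2(B)}$, while the dominant contribution comes from $\|\Delta\hat Q\|_F\|\hat R\|_2$: the CholeskyQR2 residual carries a $\kappa_2(B)$ factor relative to $\|\hat Q\|_2$, and the product $\|\hat Q\|_2\|\hat R\|_2$ supplies the extra $\sqrt{\|B\|_2}/\sqrt{\sigma_n(B)}=\sqrt{\kappa_2(B)}$, combining to $(\kappa_2(B))^{3/2}$; collecting the low-degree polynomials in $m,n$ gives the constant $16n^2$.

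The main obstacle is the bookkeeping of the $\kappa_2(B)$ powers in the backward-error estimate, and in particular pinning down the exact power of $\kappa_2(B)$ in the CholeskyQR2 residual $\|\Delta\hat Q\|_F$. A single oblique CholeskyQR sweep contributes only $\sqrt{\kappa_2(B)}$ (Lemma~\ref{lemma3_B}), so the step that decides whether the final exponent is $1$ or $\tfrac32$ is the two-sweep CholeskyQR2 residual analysis borrowed from~\cite{yamamotojsiam2015}: I would need to verify that the second sweep, applied to the already $B$-orthonormal $\hat Z_1$ whose $2$-norm is $\approx\sigma_n(B)^{-1/2}$, introduces no further unbounded amplification, and that the stated hypothesis $80\kappa_2(B)(m\sqrt{mn}\uu+n(n+1)\uu)\le1$ is exactly what keeps every intermediate Gram matrix positive definite throughout both sweeps. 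Everything else is a routine, if lengthy, repetition of the standard-inner-product constants with $\|X\|_2$ replaced by $\|X\|_2\sqrt{\|B\|_2}$ and the appropriate $B$-geometry factors inserted.
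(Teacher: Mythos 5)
Your proposal follows the paper's proof essentially step for step: the orthogonality bound \eqref{boundQ:Chol2B} is obtained exactly as you describe, by noting that \eqref{eq:guaranteeB} (together with the hypothesis $80\kappa_2(B)(m\sqrt{mn}+n(n+1))\uu\le 1$) places the output of \shiftcholqr\ below the CholeskyQR2 admissibility threshold \eqref{eq:cholqr2Bgurantee} and then citing Theorem~2 of \cite{yamamotojsiam2015}; the backward error \eqref{eq:berrB} is obtained from the same three-term splitting $\|\hat Z\hat S-X\|_F\le\|\Delta\hat Q\|_F\|\hat R\|_2+\|\hat Z\|_2\|\Delta S\|_F+\|\Delta X\|_F$ with the same ingredients: $\|\Delta X\|_F$ from \eqref{eq:Bres1_B}, $\|\hat R\|_2\le\sqrt{1.1}\|X\|_2\sqrt{\|B\|_2}$ from \eqref{eq:hatRboundB}, $\|\hat Q\|_2\le\sqrt{3}/\sqrt{\sigma_n(B)}$ from \eqref{eq:hatQbound2_B}, and $\|\Delta\hat Q\|_F\le 5n^2\uu\|\hat Q\|_2\kappa_2(B)$, $\|\hat Z\|_2\le\sqrt{2}/\sqrt{\sigma_n(B)}$ from \cite{yamamotojsiam2015}. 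You also correctly identify $\|\Delta\hat Q\|_F\|\hat R\|_2$ as the source of the $(\kappa_2(B))^{3/2}$ factor.

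The one step that is wrong as stated is your claim $\|\hat U\|_2=O(1)$ ``since $\hat U^{\top}\hat U\approx\hat Q^{\top}B\hat Q$.'' That approximation hides the perturbation $\Delta\hat Q$: from \eqref{eq:Q+deltaQB} one has $\hat U^{\top}(\hat Z^{\top}B\hat Z)\hat U=\bigl(B^{\frac{1}{2}}(\hat Q+\Delta\hat Q)\bigr)^{\top}\bigl(B^{\frac{1}{2}}(\hat Q+\Delta\hat Q)\bigr)$, and while $\|B^{\frac{1}{2}}\hat Q\|_2\le\sqrt{3}$ by \eqref{eq:hatQbound_B}, the term $\|B^{\frac{1}{2}}\Delta\hat Q\|_2$ can only be bounded by $\sqrt{\|B\|_2}\,\|\Delta\hat Q\|_2\le 5\sqrt{3}\,n^2\uu(\kappa_2(B))^{3/2}\lesssim 0.11\sqrt{\kappa_2(B)}$ under the stated hypothesis; it is not $O(\uu)$. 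So your own argument, carried out correctly, yields $\|\hat U\|_2=O(\sqrt{\kappa_2(B)})$, not $O(1)$. (The paper instead left-multiplies \eqref{eq:Q+deltaQB} by $\hat Z^{\top}B$ and settles for the cruder bound $\|\hat U\|_2\le 2.9\kappa_2(B)$.) The error happens to be harmless for the theorem: with any of these bounds the middle term $\|\hat Z\|_2\|\Delta S\|_F\le n\gamma_n\|\hat Z\|_2\|\hat U\|_2\|\hat R\|_2$ stays within the $n^2\uu(\kappa_2(B))^{3/2}\|X\|_2$ envelope --- indeed the corrected version of your bound makes this term $O(n^2\uu\,\kappa_2(B))\|X\|_2$, which is sharper than the paper's accounting, where this term contributes about $4.4\,n^2\uu(\kappa_2(B))^{3/2}\|X\|_2$ of the final constant $16$. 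So \eqref{eq:berrB} still follows, but the $O(1)$ claim itself must be repaired, since as written it is false whenever $\kappa_2(B)\gg 1$.
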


\begin{proof}
When both (\ref{boundQ:Chol2B}) and $80\kappa_2(B)(m\sqrt{mn}\uu + n(n+1)\uu)\leq 1$ hold, the assumptions (\ref{eq:assumption_kappa_B}), (\ref{eq:assumption_m_B}) and (\ref{eq:assumption_n_B}) for the application of the shifted Cholesky QR algorithm are automatically satisfied and the computed orthogonal factor $\hat{Q}$ satisfies (\ref{eq:cholqr2Bgurantee}). Then the $B$-orthogonal version of CholeskyQR2 can be safely applied to $\hat{Q}$ and the resulting orthogonal factor $\hat{Z}$ satisfies (\ref{boundQ:Chol2B}), as shown in Theorem 2 of \cite{yamamotojsiam2015}.

We next establish (\ref{eq:berrB}). 
By (\ref{eq:backward2X_B}), after the first \shiftcholqr\ we have
\begin{equation}
X + \Delta X = \hat{Q}\hat{R},
\end{equation}
where $\|\Delta X\|_{F}$ is bounded as in (\ref{eq:Bres1_B}).
We apply CholsekyQR2 to $\hat{Q}$ to obtain the QR factorization $\hat{Q}=ZU$. 
We have 
\begin{equation}
\hat{Q} + \Delta\hat{Q} = \hat{Z}\hat{U}
\label{eq:Q+deltaQB}
\end{equation}
where 
\begin{equation}
\|\Delta \hat{Q}\|_{F}\le 5 n^2 \uu\|\hat{Q}\|_2\kappa_2(B) \le\frac{ 5\sqrt{3}}{\sqrt{\sigma_n(B)}}\kappa_2(B)n^2 \uu
\label{eq:delta_QB}
\end{equation}
from (\ref{eq:hatQbound2_B}) and \cite [Thm.~3]{yamamotojsiam2015}. 
The upper triangular factor $S$ in the QR factorization of $X$ is obtained as
\begin{equation}
\hat{S}=\hat{U}\hat{R} + \Delta S
\end{equation}
where $\Delta S$ is the forward error in the matrix multiplication.
Summarizing, we can bound the overall backward error as 
\begin{eqnarray}
\|\hat{Z}\hat{S}-X\|_F
&=& \|\hat{Z}(\hat{U}\hat{R}+\Delta S)-\hat{Q}\hat{R}+\Delta X\|_F \nonumber \\
&=& \|\Delta\hat{Q}\hat{R}+\hat{Z}\Delta S+\Delta X\|_F \nonumber \\
&\le& \|\Delta\hat{Q}\|_F\|\hat{R}\|_2 + \|\hat{Z}\|_2\|\Delta S\|_F + \|\Delta X\|_F.
\label{eq:resid_shiftedCholesky2B}
\end{eqnarray}
We now bound the terms in the right-hand side. 
For the first term, we can bound $\|\Delta\hat{Q}\|_F$ as (\ref{eq:delta_QB}). 
To bound $\|\hat{R}\|_2$, we use (\ref{eq:assumption_s_B}), (\ref{eq:shiftchol_B}) and (\ref{eq:E1E2bound_B}) to obtain 
\begin{equation}
\|\hat{R}\|_2 \le \sqrt{1.1}\|X\|_2\sqrt{\|B\|_2}.
\label{eq:hatRboundB}
\end{equation}
We next bound the second term in~\eqref{eq:resid_shiftedCholesky2B}. 
By \cite{yamamotojsiam2015}, 
$\|\hat{Z}\|_2$ can be bounded as
\begin{equation}
\|\hat{Z}\|_2 \le \frac{\sqrt{2}}{\sqrt{\sigma_n(B)}}.
\label{eq:hatZnormB}
\end{equation}
Regarding $\|\Delta S\|_F$, 
using the general error bound for matrix multiplications 
$|\Delta S|\le\gamma_n|\hat{U}|\,|\hat{R}|$ we obtain 
\begin{eqnarray}
\|\Delta S\|_F \le \gamma_n\|\,|\hat{U}|\,|\hat{R}|\,\|_{F} \le \gamma_n\|\hat{U}\|_F\|\hat{R}\|_F \le n\gamma_n\|\hat{U}\|_2\|\hat{R}\|_2.
\end{eqnarray}
Here $\|\hat{R}\|_2$ can be bounded as in (\ref{eq:hatRboundB}).
To bound $\|\hat{U}\|_2$, we recall (\ref{eq:Q+deltaQB})
and left-multiply $\hat{Z}^{\top}B$ to obtain
\begin{equation}
\hat{Z}^{\top}B(\hat{Q}+\Delta\hat{Q})=\hat{Z}^{\top}B\hat{Z}\hat{U}. 
\end{equation}
Now, by \cite[Thm.~2]{yamamotojsiam2015}, 
the eigenvalues of $\hat{Z}^{\top}B\hat{Z}$ lie in the interval $[1-8\kappa_2(B)(m\sqrt{mn}{\uu}+n(n+1){\uu}),1+8\kappa_2(B)(m\sqrt{mn}{\uu}+n(n+1){\uu})]$, so it follows that 
\begin{eqnarray}
\|\hat{U}\|_2 &\le& \|(\hat{Z}^{\top}B\hat{Z})^{-1}\|_2\|\hat{Z}^{\top}\|_2\|B\|_2(\|\hat{Q}\|_2+\|\Delta\hat{Q}\|_2) \nonumber \\
&\le& \frac{1}{1-8\kappa_2(B)(m\sqrt{mn}+n(n+1)){\uu}}\frac{\sqrt{2}}{\sqrt{\sigma_n(B)}}\|B\|_2\left(\frac{\sqrt{3}}{\sqrt{\sigma_n(B)}}+\frac{ 5\sqrt{3}}{\sqrt{\sigma_n(B)}}\kappa_2(B)n^2 \uu\right) \nonumber 
\end{eqnarray}
Using the assumption $80\kappa_2(B)(m\sqrt{mn}\uu + n(n+1)\uu)\leq 1$ 
(which is \cite[eqn.~(1)]{yamamotojsiam2015}), we have
\begin{eqnarray}
\|\hat{U}\|_2 \le 2.9 \kappa_2(B). 
\end{eqnarray}
Finally, we can bound $\|\Delta X\|_F$ as in (\ref{eq:Bres1_B}). 

Combining the above bounds and substituting into 
(\ref{eq:resid_shiftedCholesky2B}) yields 
\begin{eqnarray}
\|\hat{Z}\hat{S}-X\|_F
&\le& \frac{ 5\sqrt{3}}{\sqrt{\sigma_n(B)}}\kappa_2(B)n^2 \uu\cdot \sqrt{1.1}\|X\|_2\sqrt{\|B\|_2}\nonumber\\
&&+\frac{\sqrt{2}}{\sqrt{\sigma_n(B)}}\cdot 1.02n^2\uu \cdot2.9\kappa_2(B) \sqrt{1.1}\|X\|_2\sqrt{\|B\|_2}
+ 2n^2\uu\sqrt{\kappa_2(B)}\|X\|_2\nonumber \\
&\le& 16n^2{\uu}\|X\|_2(\kappa_2(B))^{3/2}, \nonumber \
\end{eqnarray}
as required.   
\end{proof}

Experiments indicate that the bounds in Theorem~\ref{thm:Bsta} are overestimates, in particular the dependence on $\kappa_2(B)$ appears to be much weaker. 

\section{Numerical experiments}\label{sec:ex}
In this section we present some numerical experiments to illustrate our results. 
All computations were carried out on MATLAB 2017b and IEEE standard 754 binary64 (double precision) in Mac OS X version 10.13 with 2 GHz Intel Core i7 Duo processor, so that ${\bf u}=2^{-53}\approx1.11\times10^{-16}$. 

\subsection{Convergence with CholeskyQR iterates}
First, we take $B=I$ and examine how $\kappa_{2}(\hat{Q}^{(k)})$ 
and $\|\hat{Q}^{(k)\top}\hat{Q}^{(k)}-I\|_{2}$ 
are reduced after $k$ (shifted)Cholesky QR steps. 
We also 
compare \shiftcholqr3 with the mixed-precision Cholesky QR (mixedCholQR) \cite{yamazaki2015mixed}, which uses doubled precision for the first two steps~\eqref{eq:gram}, \eqref{cholA} and repeats the process in double precision. 
To run mixedCholQR we used the Multiprecision Computing Toolbox~\cite{adva-mct}, which enables computation in MATLAB with arbitrary precision.  
We generate test matrices with a specified condition number by forming 
\begin{equation}\label{testmat}
X := U \Sigma V^{T} \in {\mathbb R}^{m\times n}, 
\end{equation}
where $U$ is an $m \times n$ random orthogonal matrix obtained by taking the QR factorization of a random matrix, 
$V$ is an $n \times n$ random orthogonal matrix and 
$$
\Sigma = {\rm diag}(1, \sigma^{\frac{1}{n-1}}, \cdots, \sigma^{\frac{n-2}{n-1}}, \sigma). 
$$ 
Here, $0<\sigma<1$ is some constant. 
This is essentially MATLAB's {\tt randsvd} construction. 
Thus $\|X\|_2=1$ and the $2$-norm condition number of $X$ is $\kappa_2(X)=1/\sigma$. 
Let $k$ denote the number of iterations. 
In Table \ref{tab:itechol1}, $\kappa_{2}(X)=10^{12}, m=1000, n=30$. 
In Table \ref{tab:itechol2}, $\kappa_{2}(X)=10^{13}, m=100, n=100$. 

Tables \ref{tab:itechol1} and \ref{tab:itechol2} illustrate that the conditioning  $\kappa_{2}(\hat{Q}^{(1)})$ is improved by \shiftcholqr\ to approximately $\mathcal{O}(\sqrt{\alpha})\kappa_{2}(X)\lesssim \uu^{-\frac{1}{2}}$. Then $\|\hat{Q}^{(1)\top}\hat{Q}^{(1)}-I\|_{2}= \mathcal{O}(1)$ and CholeskyQR2 safely computes the QR factorization of of $\hat{Q}^{(1)}$; these are all consistent with Theorem \ref{eq:kap2} and Lemma \ref{lemma1}. 
We also see that $\kappa_{2}(\hat{Q}^{(1)})\approx \uu\kappa_2(X)=\mathcal{O}(1)$ in mixedCholQR, which is consistent with Theorem 3.4 in \cite{yamazaki2015mixed}. 
Here \shiftcholqr3 requires one more iteration than mixedCholQR, which is 
a typical behavior and reflects the theory for $\kappa_2(X)\in (\uu^{-1/2},\uu^{-1})$. 
\shiftcholqr3 has the advantage over mixedCholQR
that no high-precision arithmetic is needed, thereby being much faster in practice; indeed here it was faster by orders of magnitude.

We next turn to $B\neq I$. 
We set $B$ to be a SPD matrix as above, with $U=V$. 
We illustrate Theorem~\ref{eq:kap2_B} by examining how $\frac{\|X\|_2\sqrt{\|B\|_2}}{\sqrt{\sigma_n(X^{\top}BX)}}$ is reduced as compared with $\sqrt{\alpha\kappa_2(B)}$ where $\alpha=\frac{\sqrt{s}}{\|X\|_{2}\sqrt{\|B\|_2}}$. 
We set $X$ to be a random matrix with a specified condition number as in (\ref{testmat}) and form $B\succeq 0$ as in~\eqref{testmat}, now taking $V=U^T$. 
In Table \ref{tab:itecholB}, we took $\kappa_{2}(X)=10^{12}, \kappa_{2}(B)=10^{8}$, and $m=300, n=30$. 

Table \ref{tab:itecholB} also confirms that $\frac{\|\hat{Q}^{(1)}\|_2\sqrt{\|B\|_2}}{{\sqrt{\sigma_n(\hat{Q}^{(1)T}B\hat{Q}^{(1)})}}}$ is improved by \shiftcholqr3 by a factor $\mathcal{O}(\sqrt{\alpha/\kappa_2(B)})\lesssim \uu^{-\frac{1}{2}}$ (experiments suggest that the $\sqrt{\kappa_2(B)}$ dependence is often a significant overestimate). CholeskyQR2 then safely completes the QR factorization, as predicted by Theorem \ref{eq:kap2_B}. 

\begin{table}[H]
  \begin{center}
    \caption{Results for test matrices with $\kappa_2(X)=10^{12}$, $m=1000$, $n=30$, $B=I$. }
  \begin{tabular}{ccccccc} \hline
   $ $ &  \multicolumn{3}{c}{Algorithm~\ref{alg12}}& & \multicolumn{2}{c}{mixedCholQR}\\\cline{2-4}\cline{6-7}\mystrut{8pt}
    $k$ & $\kappa_2(\hat{Q}^{(k)})$ & $\|\hat{Q}^{(k)\top}\hat{Q}^{(k)}-I\|_{2}$ & $\sqrt{\alpha}$ & &$\kappa_2(\hat{Q}^{(k)})$ &$\|\hat{Q}^{(k)\top}\hat{Q}^{(k)}-I\|_{2}$\\ \hline \hline
    1 & $6.14\cdot 10^{6}$ & $1.00$ &  $ 1.80\cdot 10^{-5}$ & & $ 1.00$&$9.88\cdot 10^{-6}$\\
    2 & $1.01$ & $1.70\cdot 10^{-4}$ & - & & $ 1.00$&$9.04\cdot 10^{-16}$\\
    3 & $1.00$ & $5.66\cdot 10^{-16}$& - & &- &-\\ \hline 
  \end{tabular}
  \label{tab:itechol1}
    \end{center}
\end{table}

\begin{table}[H]
  \begin{center}
    \caption{Results for test matrices with $\kappa_2(X)=10^{13}$, $m=100$, $n=100$, $B=I$.}
  \begin{tabular}{ccccccc} \hline
   $ $ &  \multicolumn{3}{c}{Algorithm~\ref{alg12}}& & \multicolumn{2}{c}{mixedCholQR}\\\cline{2-4}\cline{6-7}\mystrut{8pt}
    $k$ & $\kappa_2(\hat{Q}^{(k)})$ & $\|\hat{Q}^{(k)\top}\hat{Q}^{(k)}-I\|_{2}$ & $\sqrt{\alpha}$ & &$\kappa_2(\hat{Q}^{(k)})$ &$\|\hat{Q}^{(k)\top}\hat{Q}^{(k)}-I\|_{2}$\\ \hline \hline
    1 & $4.95\cdot 10^{7}$ & $1.00$ &  $ 1.48\cdot 10^{-5}$ & & $ 1.00$&$3.62\cdot 10^{-4}$\\
    2 & $1.02$ & $1.93\cdot 10^{-2}$ & - & &$ 1.00$&$1.15\cdot 10^{-15}$\\
    3 & $1.00$ & $1.07\cdot 10^{-15}$& - & &- &-\\ \hline 
  \end{tabular}
  \label{tab:itechol2}
    \end{center}
\end{table}

\begin{table}[H]
  \begin{center}
    \caption{Results for test matrices with $B\neq I$, $\|X\|_2\sqrt{\|B\|_2}/{\sqrt{\sigma_n(X^{\top}BX)}}=1.38\times 10^{10}$, $m=300$, $n=30$. }
  \begin{tabular}{cccc} \hline
   $ $ &  \multicolumn{3}{c}{Algorithm~\ref{alg:shiftchol2B}}\\
   \cline{2-4}\mystrut{8pt}
    $k$ & $\frac{\|\hat{Q}^{(k)}\|_2\sqrt{\|B\|_2}}{{\sqrt{\sigma_n(\hat{Q}^{(k)T}B\hat{Q}^{(k)})}}}$ & $\|\hat{Q}^{(k)\top}B\hat{Q}^{(k)}-I\|_{2}$ & $\sqrt{\alpha \kappa_2(B)}$ \\ 
    \hline \hline
    1 & $4.11\cdot 10^{8}$ & $1.00$ &  $ 8.41\cdot 10^{-2}$ \\
    2 & $13.50$ & $8.31\cdot 10^{-3}$ & - \\
    3 & $13.50$ & $3.49\cdot 10^{-15}$& - \\ 
    \hline 
  \end{tabular}
  \\
   \smallskip
   \begin{tabular}{ccc} \hline
   $ $ &  \multicolumn{2}{c}{mixedCholQR}\\
   \cline{2-3}\mystrut{8pt}
    $k$ & $\frac{\|\hat{Q}^{(k)}\|_2\sqrt{\|B\|_2}}{{\sqrt{\sigma_n(\hat{Q}^{(k)T}B\hat{Q}^{(k)})}}}$ & $\|\hat{Q}^{(k)\top}B\hat{Q}^{(k)}-I\|_{2}$\\ 
    \hline \hline
    1 &$ 13.50$&$4.82\cdot 10^{-5}$\\
    2 & $ 13.50$&$3.34\cdot 10^{-15}$\\
    \hline 
  \end{tabular}
  \label{tab:itecholB}
    \end{center}
\end{table}

\begin{remark}
The choice of $s$ in~\eqref{eq:finds2B} tends to be a conservative overestimate, and in most cases, a successful Cholesky factorization can be computed with a smaller shift, such as  $A + (\uu \|X\|^2_2\|B\|_{2})I$. 
It can be seen (if Cholesky still does not break down) that the reduction factor of $\kappa_2(\hat{Q}^{(k)})$ improves to about $(\sqrt{\uu})^{(k)}$ after $k$ \shiftcholqr\ steps. 
To illustrate this, 
in Figure \ref{shift_test} we show 
the values of $\kappa_2(\hat{Q}^{(1)})$ as we vary the shift in  \shiftcholqr\, 
taking $B=I, m=1000$, $n=50$, $\kappa_2(X)=10^{15}$.
Our ``safe'' choice $s:=11\{mn+n(n+1)\}\uu\|X\|_{2}^{2}\approx 6.1\cdot 10^{-11}$ is shown in Figure \ref{shift_test} by a blue asterisk. 
In this case, $\kappa_2(\hat{Q}^{(1)})$ is larger than required by \eqref{eq:cholqr2gurantee}, and more \shiftcholqr\ iterations would be needed. 
On the other hand, if we set $s:=\uu \|X\|^2_2$, then 
$\hat{Q}^{(1)}$ will satisfy the sufficient condition 
$\kappa_2(\hat{Q}^{(1)})\leq \uu^{-\frac{1}{2}}$
for CholeskyQR2 to work. 
However, there is no guarantee that the initial Cholesky factorization $\mbox{chol}(A+\uu \|X\|^2_2 I )$ does not break down. 

\begin{figure}[H]
   \begin{center} 
      \includegraphics[width=90mm]{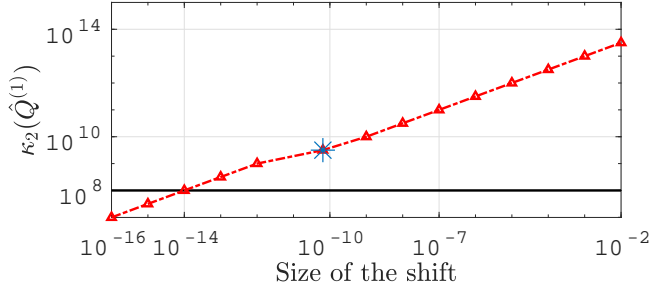}
            \caption{$\kappa_2(\hat{Q}^{(1)})$ for test 
matrix with $\kappa_2(X)=10^{15}$,
$m=1000$, $n=50$, varying the shift of \shiftcholqr .}
      \label{shift_test}
   \end{center}
\end{figure}  

\end{remark}

\subsection{Orthogonality and residual}
Next we examine the numerical stability of \shiftcholqr3 (shown in the figures as sCholQR3) and compare it with other popular QR decomposition algorithms, namely, Householder QR, classical and modified Gram-Schmidt (CGS and MGS; we also run them twice, CGS2 and MGS2).  We first take $B=I$ and vary $\kappa_2(X)$, $m$ and $n$ and investigate the dependence of the orthogonality and residual on them. 
We set $X$ 
as in (\ref{testmat}). 
We examine  the orthogonality and residual measured by the Frobenius norm under various conditions in Figures \ref{fig:varying_cond} through \ref{fig:varying_n}. 
Figure \ref{fig:varying_cond} shows the orthogonality $\|\hat{Q}^{T}\hat{Q}-I \|_F$ and residual $\|\hat{Q}\hat{R}-X \|_F$, where we take  $m=300$, $n=10$ and $\kappa_2(X)$ was varied from $10^8$ to $10^{15}$.
In Figure \ref{fig:varying_m}, $\kappa_2(X)=10^{12}$, $n=50$ and $m$ was varied from 1000 to 10000. 
In Figure \ref{fig:varying_n}, $\kappa_2(X)=10^{12}$, $m=1000$ and $n$ was varied from 10 to 500. 
 
We see in Figure~\ref{fig:varying_cond} that with \shiftcholqr, the orthogonality and the residual are independent of $\kappa_2(X)$ and are of $O({\bf u})$, as long as $\kappa_2(X)$ is at most $O({\bf u}^{-{1}})$. This is in good agreement with Theorem~\ref{thm:maincholqr3}. 
Figures \ref{fig:varying_m} and \ref{fig:varying_n} indicate that the orthogonality and residual increase only mildly with $m$ and $n$. Although they are inevitably overestimates, these also reflect our results (\ref{boundQ:Chol2}) and (\ref{eq:berr}). 
Compared with Householder QR, we observe that \shiftcholqr3 usually produces slightly better orthogonality and residual. 
With MGS, the deviation from orthogonality increases proportionally to $\kappa_2(X)$. 
As is well known, Gram-Schmidt type algorithms perform well when repeated twice, and we can verify this here. 
As mentioned in Section \ref{com_cgs2}, an advantage of \shiftcholqr3  is that it is rich in BLAS-3 operations and easily parallelized (it ran more than ten times faster than Gram-Schmidt algorithms in the experiments here). 
Overall, we see that \shiftcholqr3 is an efficient and reliable method for matrices with condition number at most $O({\bf u}^{-1})$.

Next, we again take $B\neq I$ and test 
Algorithm~\ref{alg:shiftchol2B}, comparing it with the stability of other popular QR decomposition algorithms, namely, MGS, CGS2 and MGS2. 
We varied $\kappa_2(B)$, $m$ and $n$ and investigated the orthogonality $\|\hat{Q}^{T}B\hat{Q}-I \|_F$ and residual $\|\hat{Q}\hat{R}-X \|_F$. 
Figure \ref{fig:varying_condX_B} shows the results 
for the case  $m=500$, $n=20$, $\kappa_2(B)=10^{10}$ and $\kappa_2(B)$ was varied from $10^1$ to $10^{10}$.
Figure \ref{fig:varying_condB_B} 
takes  $m=300$, $n=50$, $\kappa_2(X)=10^{10}$ and $\kappa_2(B)$ was varied from $10^8$ to $10^{15}$.
In Figure \ref{fig:varying_m_B} we took $\kappa_2(X)=10^{8}$, $\kappa_2(B)=10^{10}$, $n=50$ and $m$ was varied from 500 to 2000. 
Figure \ref{fig:varying_n_B} takes $\kappa_2(X)=10^{8}$, $\kappa_2(B)=10^{10}$, $m=1000$ and $n$ was varied from 50 to 500. 

From Figure \ref{fig:varying_condX_B} we see that the orthogonality and residual of \shiftcholqr3 are independent of $\kappa_2(X)$ and are of $O({\bf u})$, as long as $\frac{\|X\|_2\sqrt{\|B\|_2}}{\sqrt{\sigma_n(X^{\top}BX)}}\lesssim O({\bf u}^{-{1}})$, reflecting Theorem~\ref{thm:Bsta}. 
Figure \ref{fig:varying_condB_B} shows that the orthogonality increase rather mildly with $\kappa_2(B)$, indicating the dependence on $\kappa_2(B)$ suggested by~\ref{boundQ:Chol2B} is perhaps improvable; we leave this for future work.
 Figures \ref{fig:varying_m_B} and \ref{fig:varying_n_B} illustrate the orthogonality and residual increase with $m$ and $n$, again mildly. 
 These are also in agreement with  (\ref{boundQ:Chol2B}) and (\ref{eq:berrB}) in Theorem~\ref{thm:Bsta}; again, its $m,n$-dependence may be improvable. 
Compared with CGS2 and MGS2, the orthogonality and residual of CGS2 and MGS2 and that of \shiftcholqr3 are of the same magnitude. Again, \shiftcholqr3  has the advantage of being parallelization-friendly. 
All our experiments corroborate that \shiftcholqr3 is a reliable method whether $B=I$ or $B\neq I$, for matrices with $\frac{\|X\|_2\sqrt{\|B\|_2}}{\sqrt{\sigma_n(X^{\top}BX)}}<O({\bf u}^{-{1}})$.

\begin{figure}[htbp]
   \begin{center} 
      \includegraphics[width=60mm]{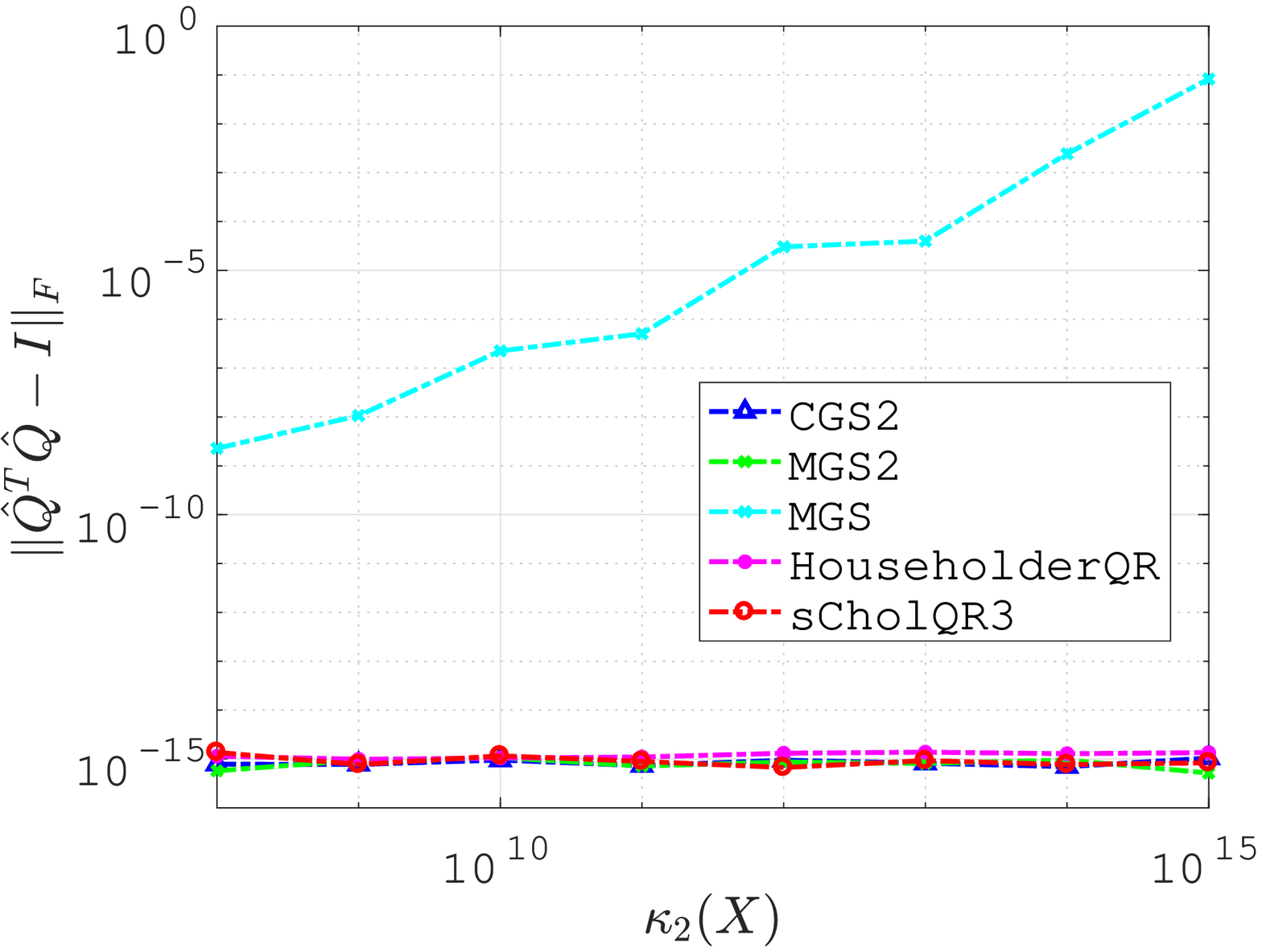}
      \includegraphics[width=60mm]{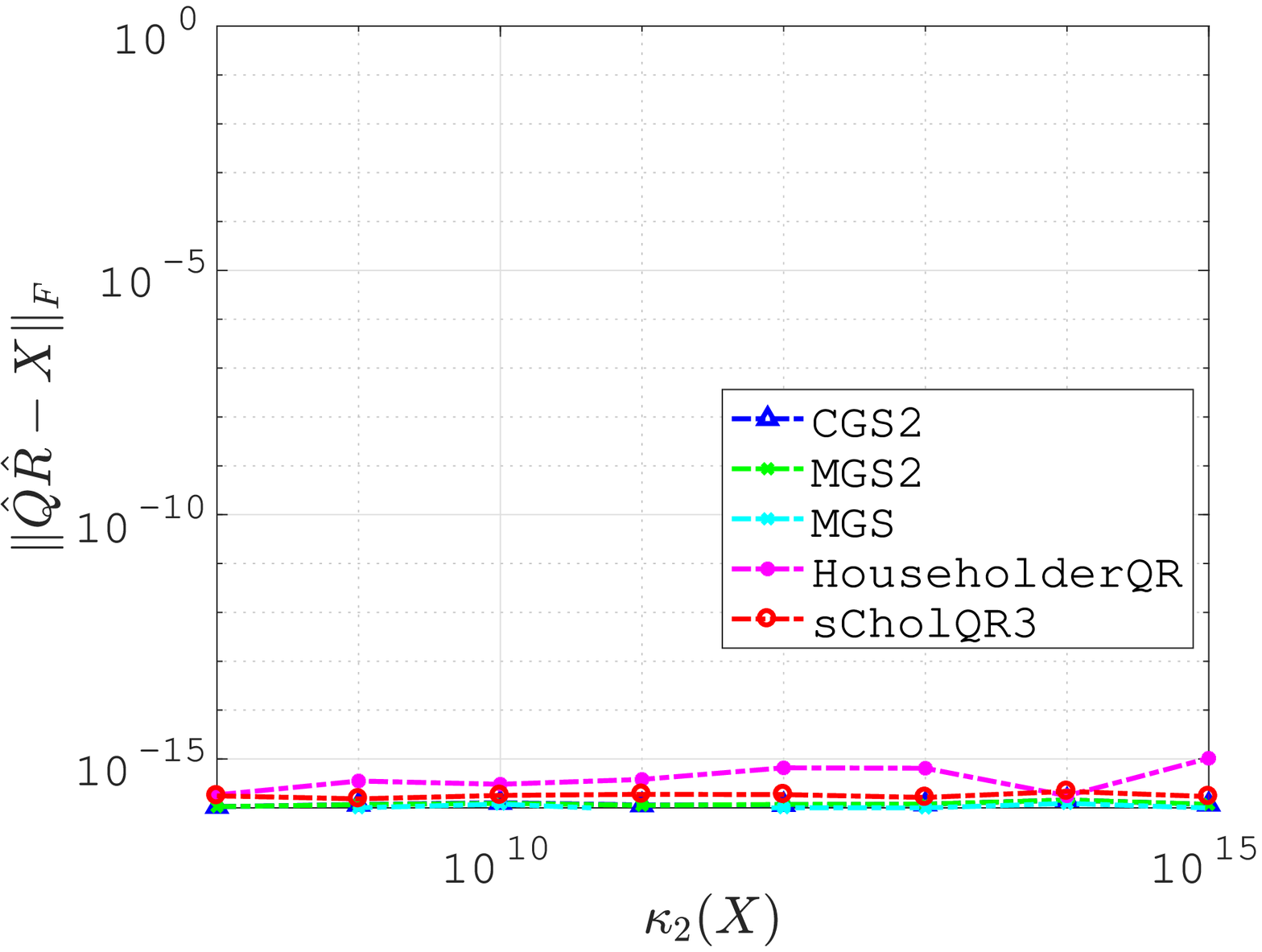}
            \caption{Orthogonality $\|\hat{Q}^{\top}\hat{Q}-I \|_F$ and residual $\|\hat{Q}\hat{R}-X \|_F$ for test matrices with $m=300$, $n=10$, varying $\kappa_2(X)$.}
      \label{fig:varying_cond}
   \end{center}
\end{figure}  

\begin{figure}[htbp]
   \begin{center} 
      \includegraphics[width=60mm]{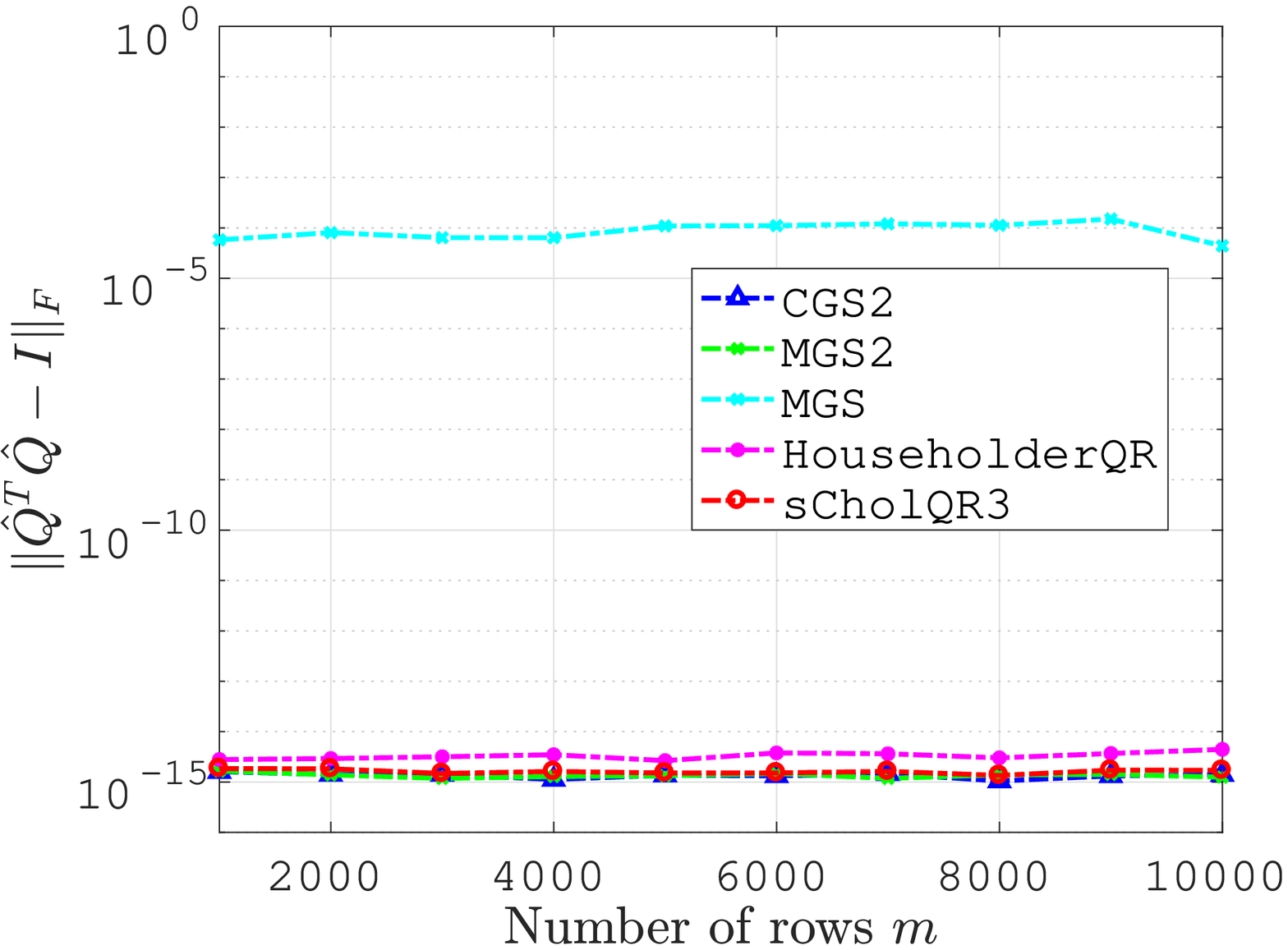}
      \includegraphics[width=60mm]{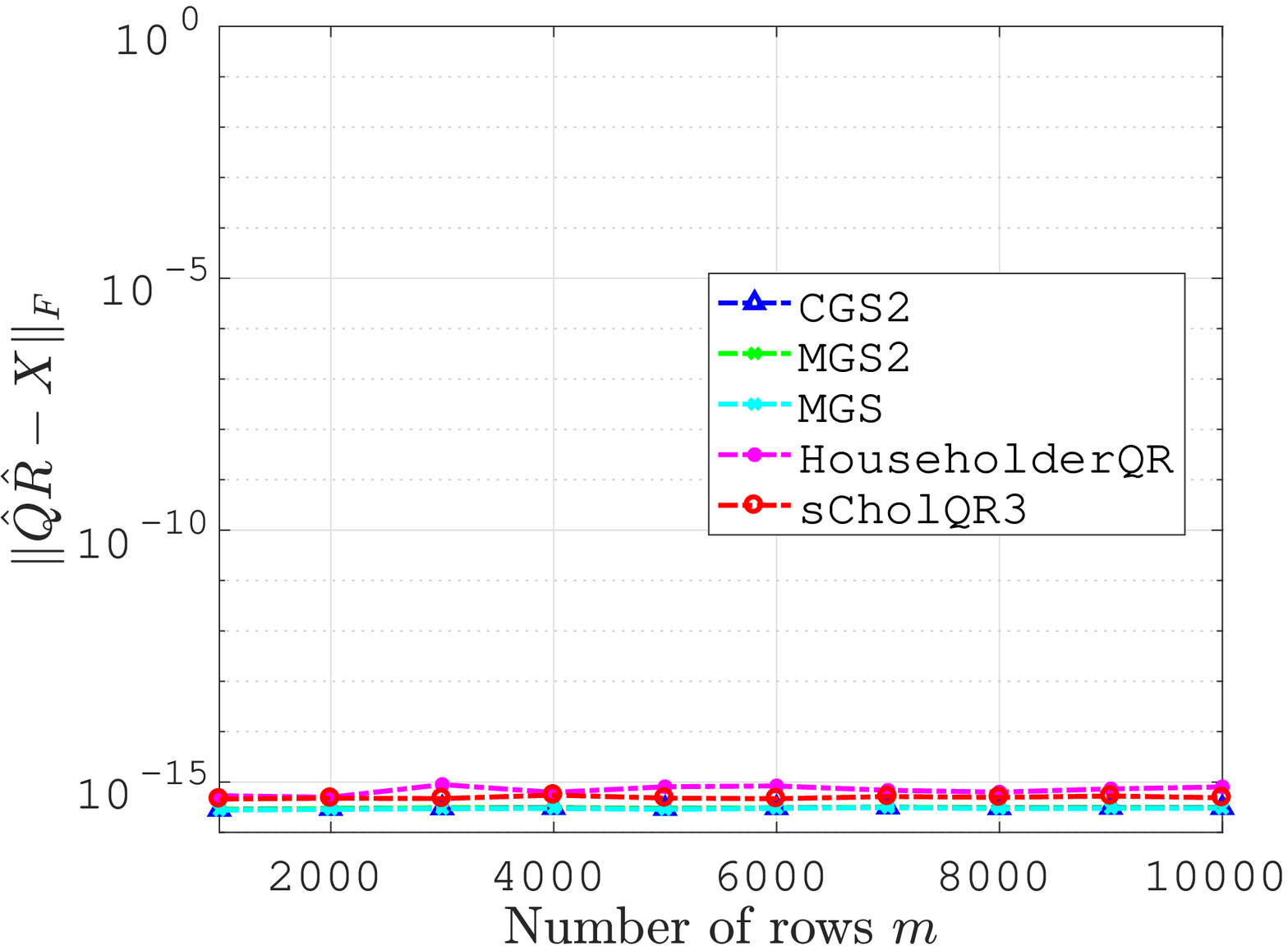}
      \caption{Orthogonality $\|\hat{Q}^{\top}\hat{Q}-I \|_F$ and residual $\|\hat{Q}\hat{R}-X \|_F$ for test matrices with $\kappa_2(X)=10^{12}$, $n=50$, varying $m$.}
      \label{fig:varying_m}
   \end{center}
\end{figure}  

\begin{figure}[htbp]
   \begin{center} 
      \includegraphics[width=60mm]{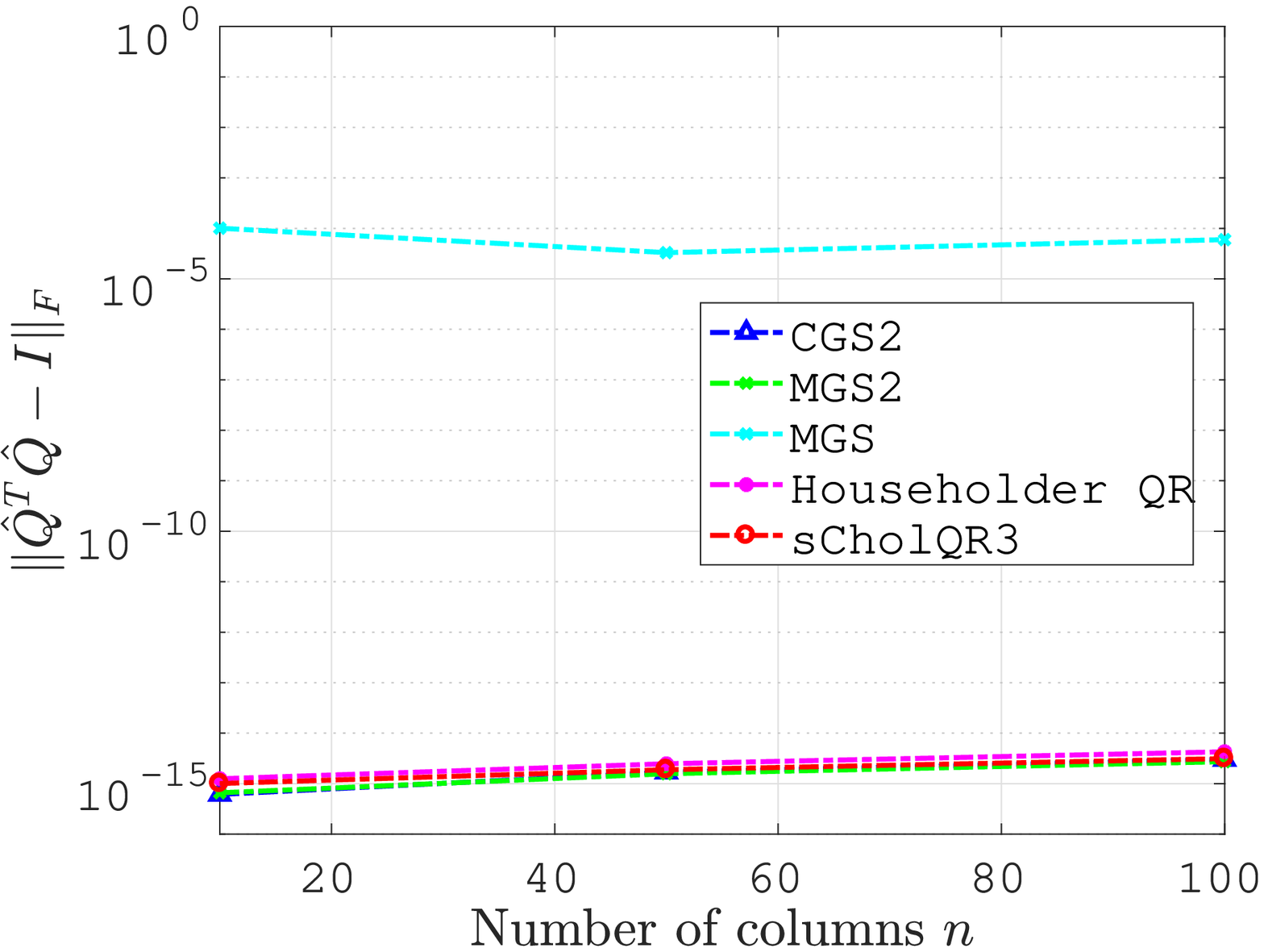}
      \includegraphics[width=60mm]{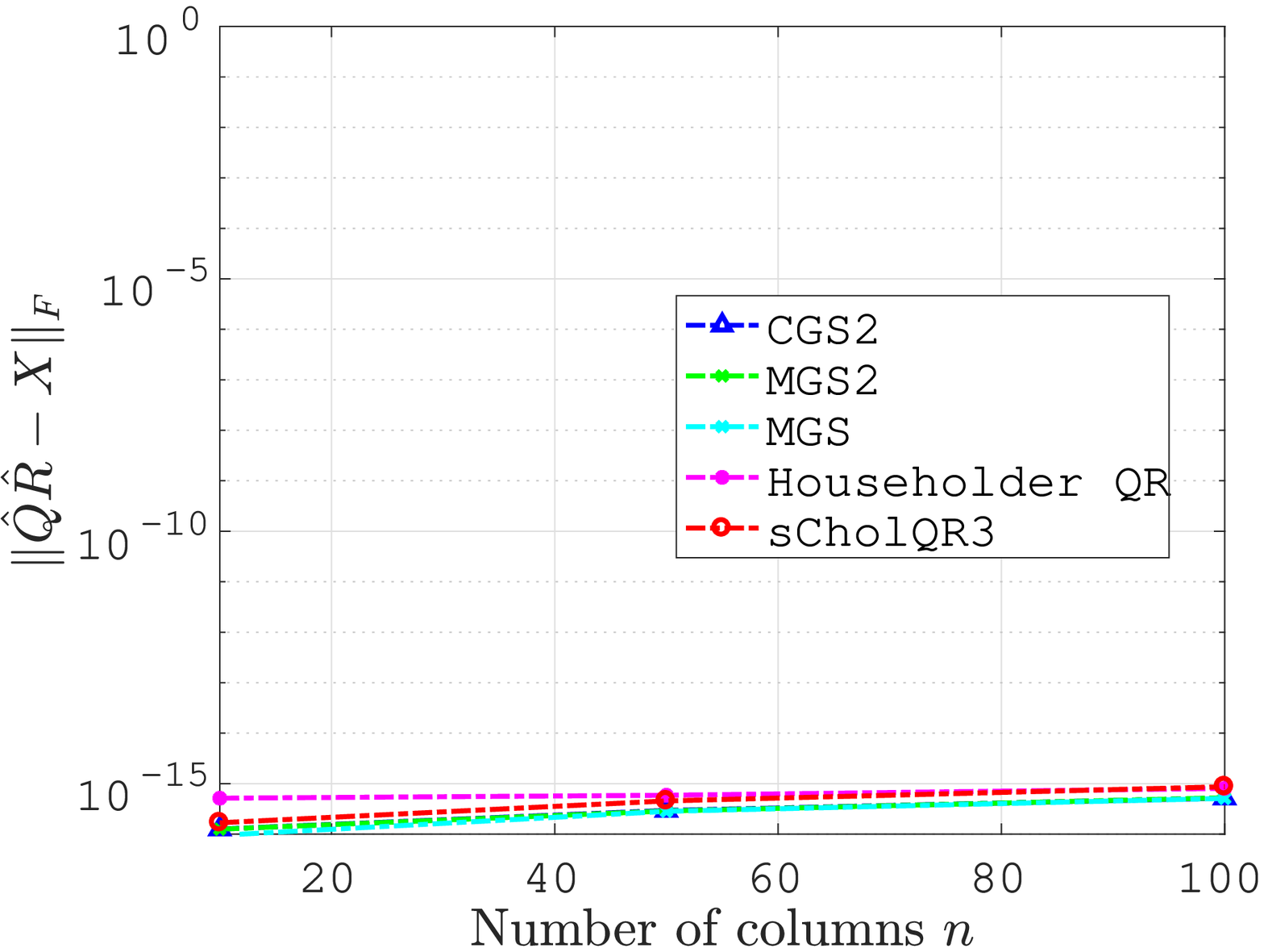}
      \caption{Orthogonality $\|\hat{Q}^{\top}\hat{Q}-I \|_F$ and residual $\|\hat{Q}\hat{R}-X \|_F$ for test matrices with $\kappa_2(X)=10^{12}$, $m=1000$, varying $n$.}
      \label{fig:varying_n}
   \end{center}
\end{figure}

\begin{figure}[htbp]
   \begin{center} 
      \includegraphics[width=60mm]{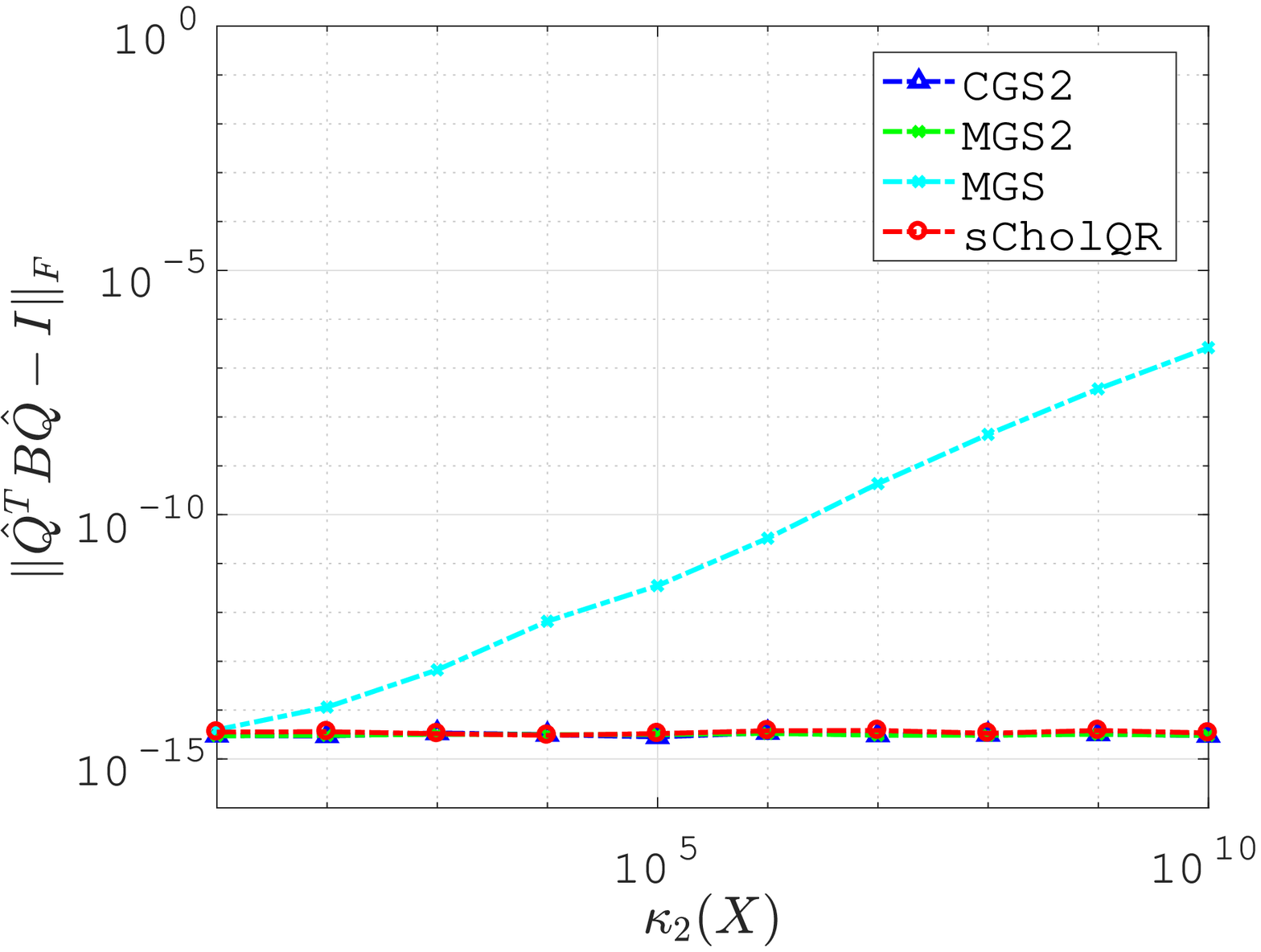}
      \includegraphics[width=60mm]{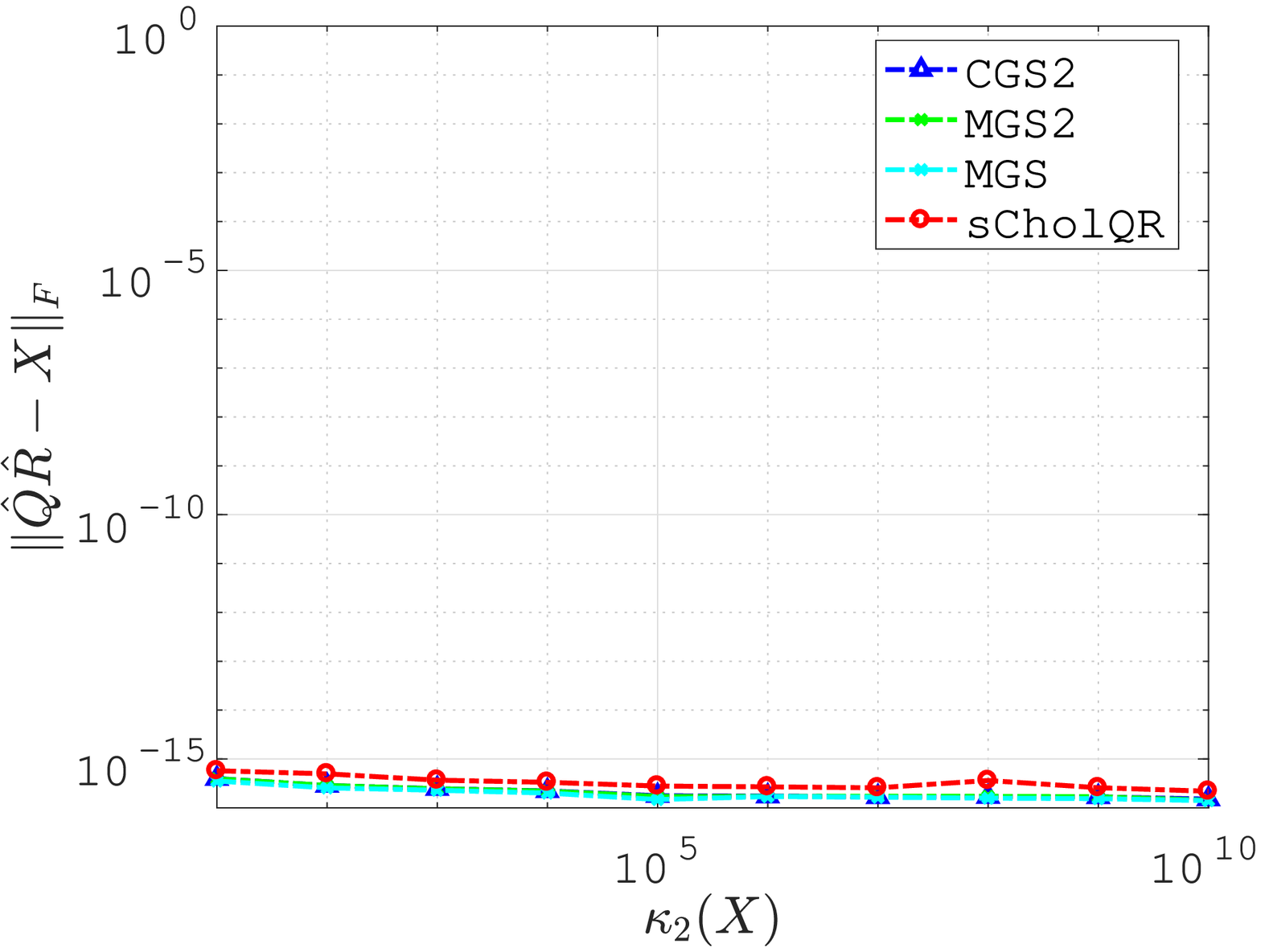}
      \caption{Orthogonality $\|\hat{Q}^{\top}B\hat{Q}-I \|_F$ and Residual $\|\hat{Q}\hat{R}-X \|_F$ for test matrices with $m=500$, $n=20$, $\kappa_2(B)=10^{10}$, varying $\kappa_2(X)$.}
      \label{fig:varying_condX_B}
   \end{center}
\end{figure}  

\begin{figure}[htbp]
   \begin{center} 
      \includegraphics[width=60mm]{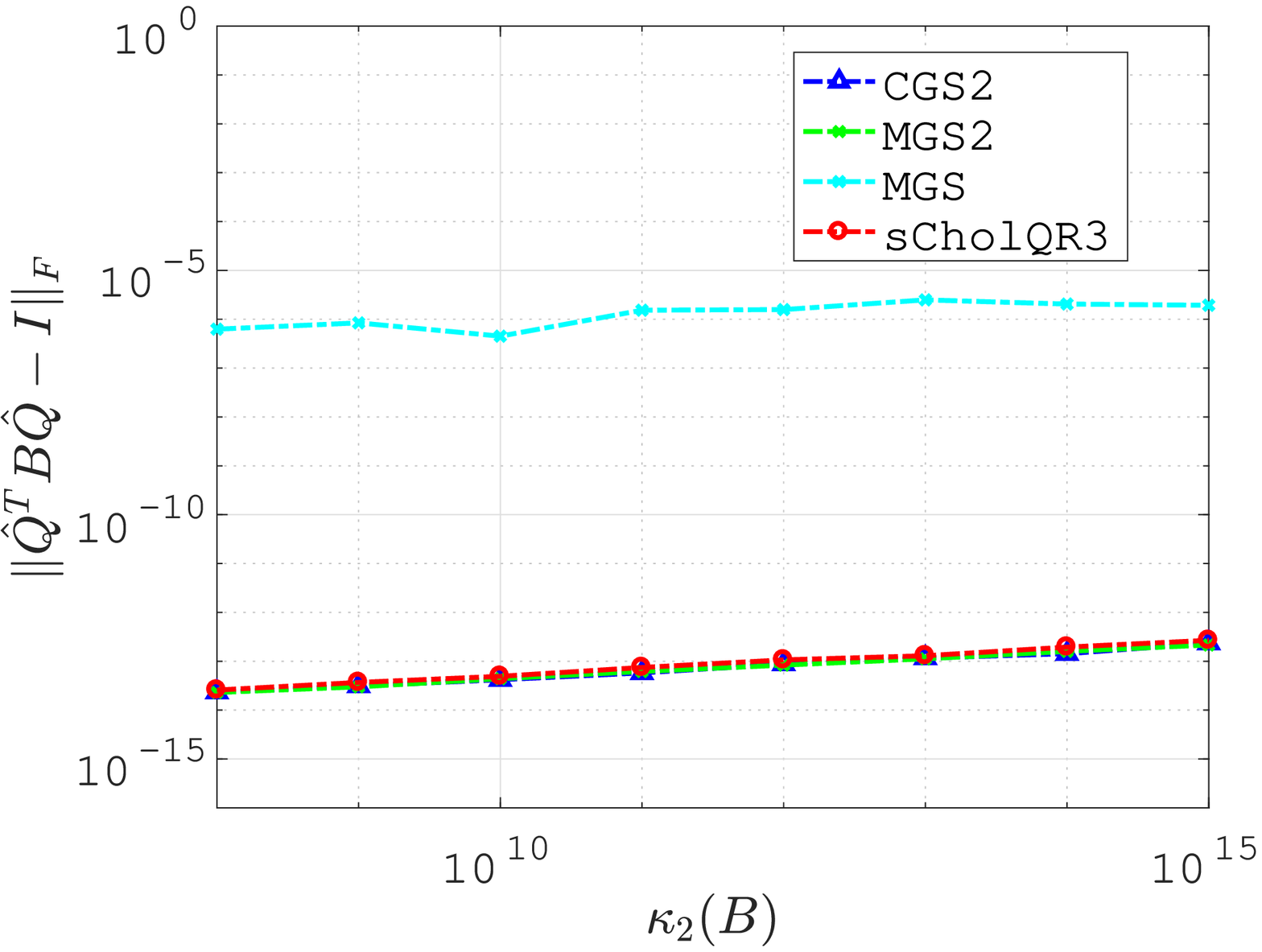}
      \includegraphics[width=60mm]{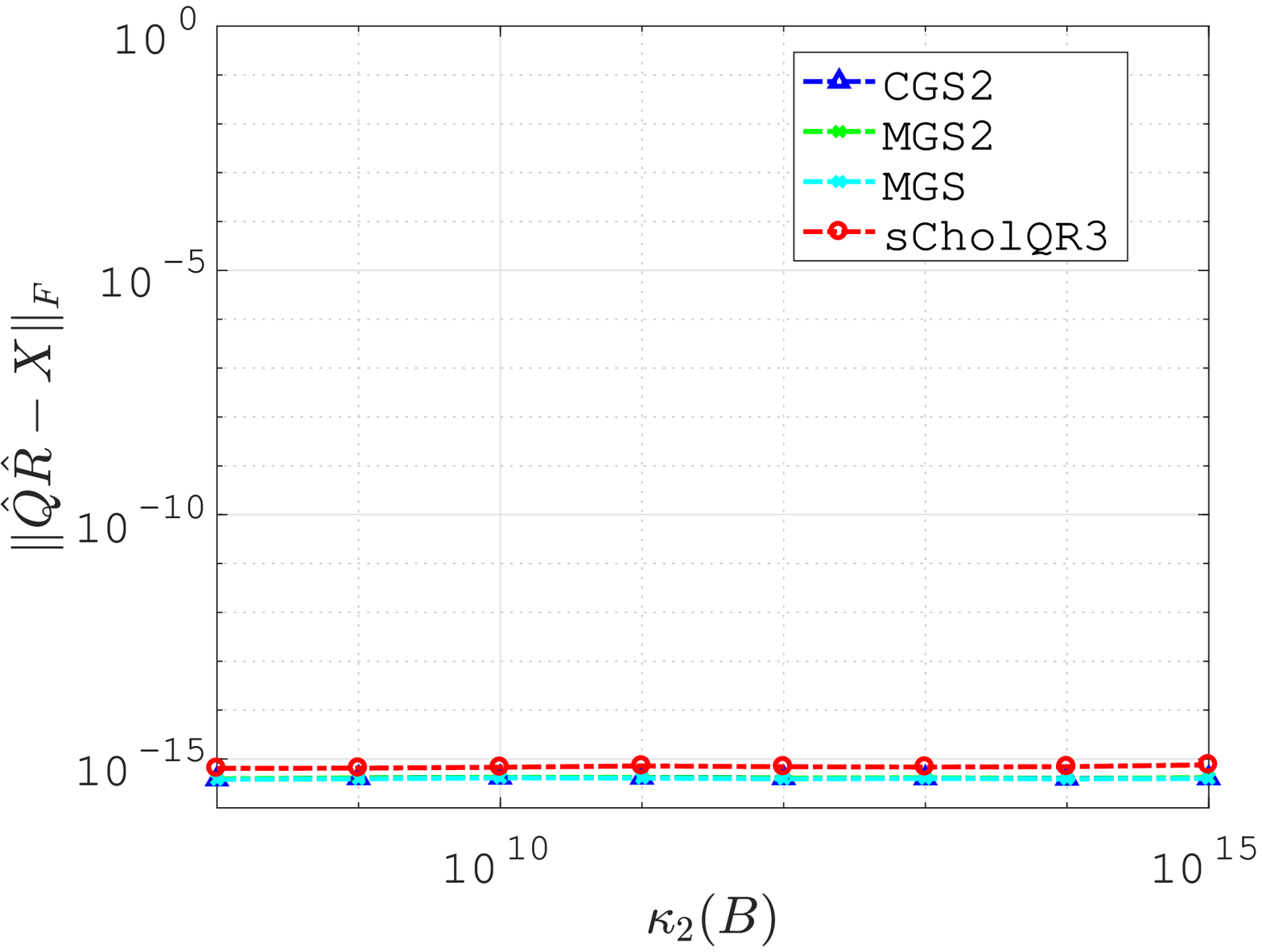}
      \caption{Orthogonality $\|\hat{Q}^{\top}B\hat{Q}-I \|_F$ and Residual $\|\hat{Q}\hat{R}-X \|_F$ for test matrices with $m=300$, $n=50$, $\kappa_2(X)=10^{10}$, varying $\kappa_2(B)$.}
      \label{fig:varying_condB_B}
   \end{center}
\end{figure}  

\begin{figure}[htbp]
   \begin{center} 
      \includegraphics[width=60mm]{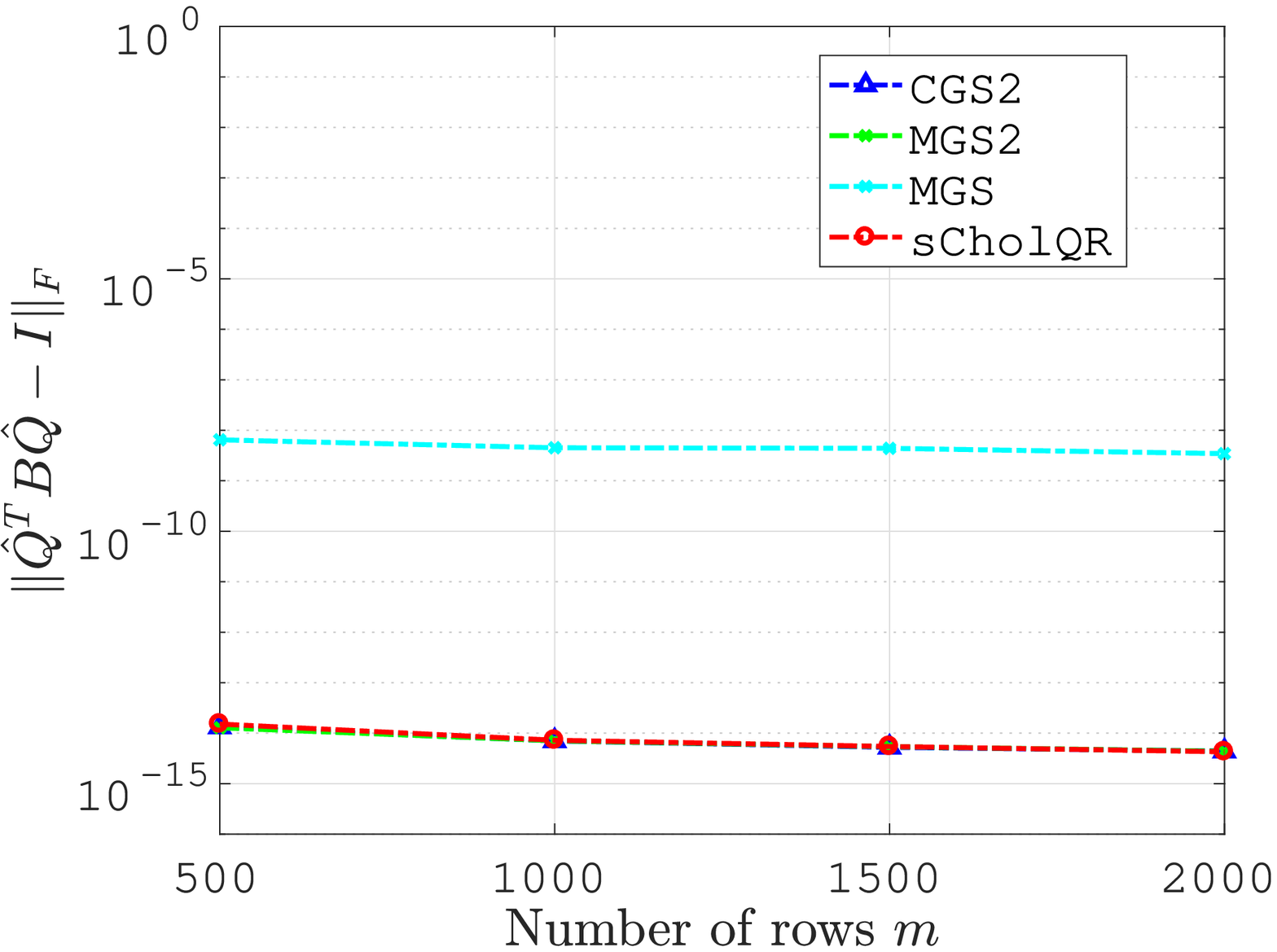}
      \includegraphics[width=60mm]{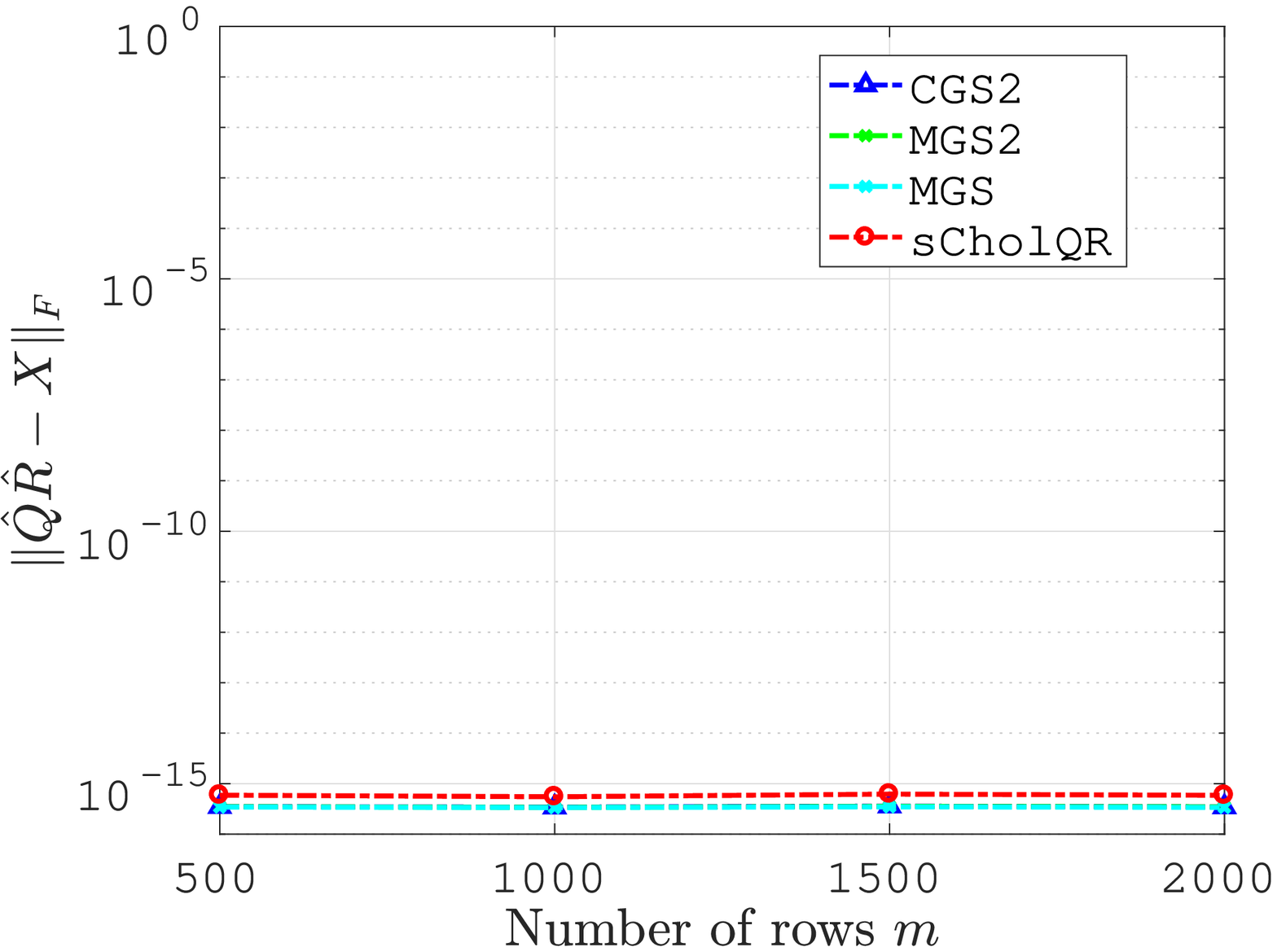}
      \caption{Orthogonality $\|\hat{Q}^{\top}B\hat{Q}-I \|_F$ and Residual $\|\hat{Q}\hat{R}-X \|_F$ for test matrices with $\kappa(X)=10^{8}$, $\kappa_2(B)=10^{10}$, $n=50$, varying $m$.}
      \label{fig:varying_m_B}
   \end{center}
\end{figure}  

\begin{figure}[htbp]
   \begin{center} 
      \includegraphics[width=60mm]{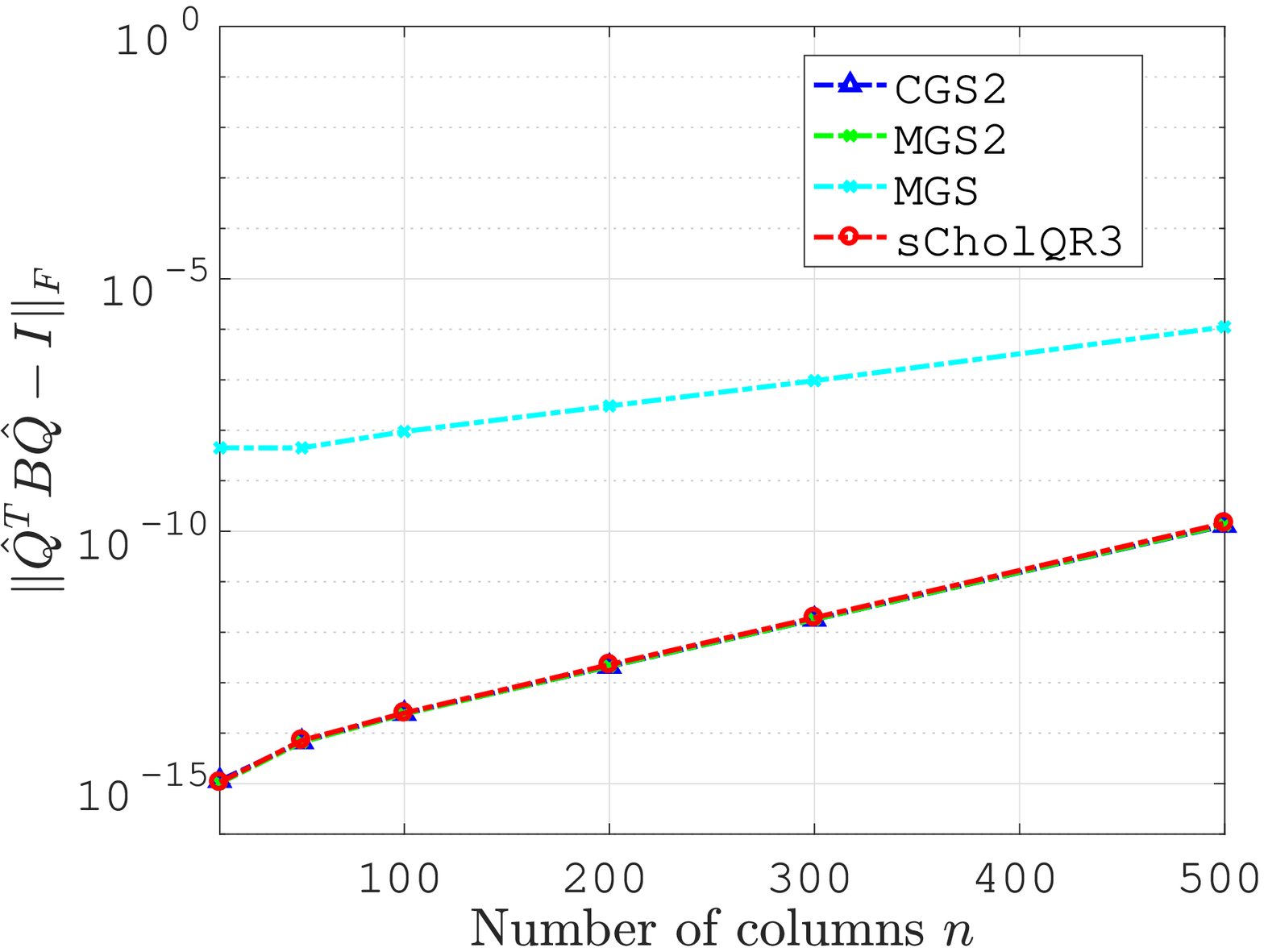}
      \includegraphics[width=60mm]{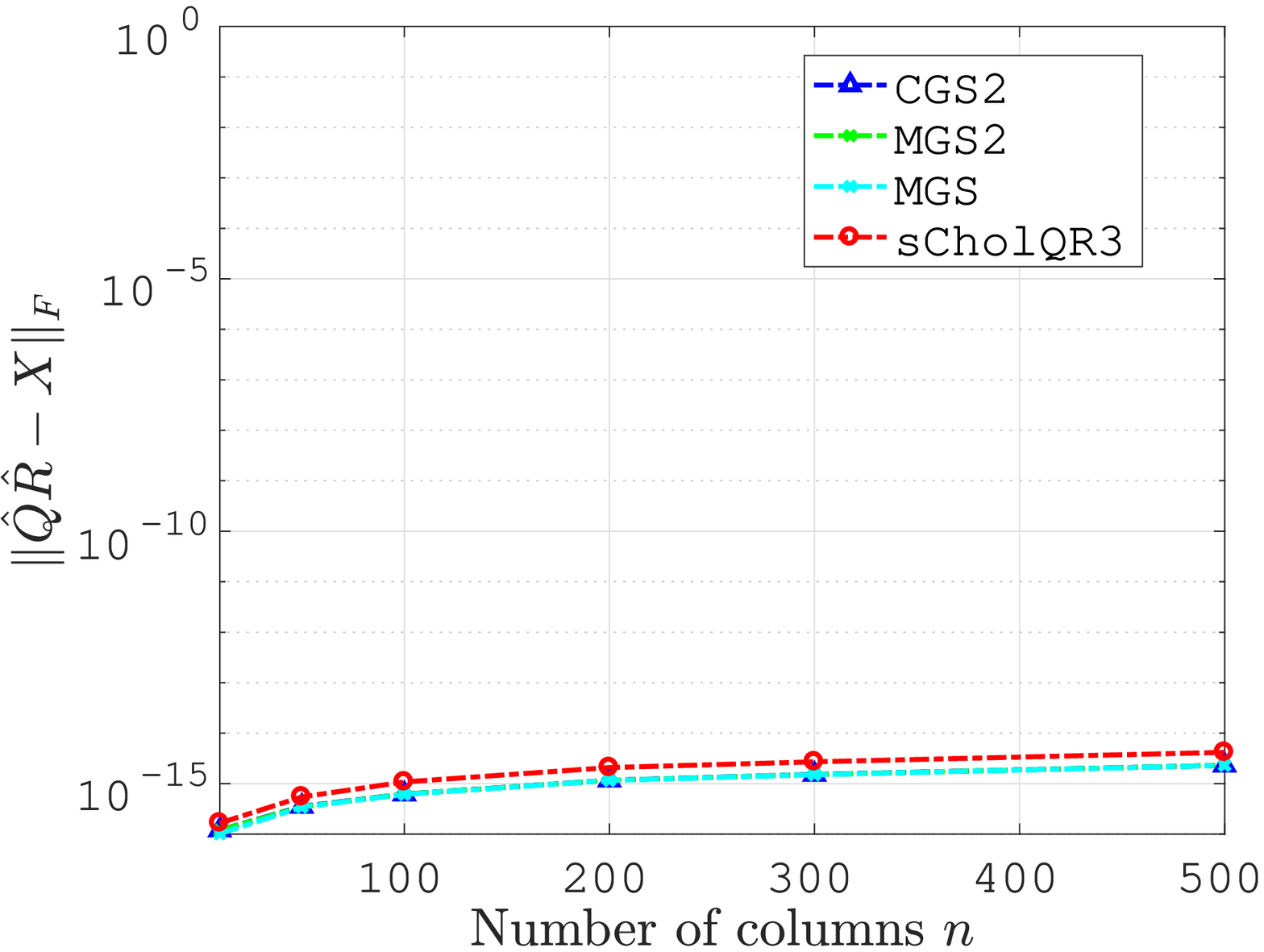}
      \caption{Orthogonality $\|\hat{Q}^{\top}B\hat{Q}-I \|_F$ and Residual $\|\hat{Q}\hat{R}-X \|_F$ for test matrices with $\kappa_2(X)=10^{8}$, $\kappa_2(B)=10^{10}$, $m=1000$, varying $n$.}
      \label{fig:varying_n_B}
   \end{center}
\end{figure}

\section{Runtime Performance}\label{sec:perf}
We next evaluate the runtime performance of \shiftcholqr3 in multi-core CPU environments for both the standard and the oblique case involving a sparse symmetric, positive definite matrix. 
\subsection{Standard inner product}
We used a compute node of the Laurel~2 supercomputer system installed at the Academic Center for Computing and Media Studies, Kyoto University, whose specifications are listed in Table~\ref{tab:spec_laurel2}. Here we focus on the standard $B=I$ case, to facilitate comparison with available algorithms.
Test matrices are generated as in the previous experiments, using \eqref{testmat}. 
Here, random orthogonal matrices are obtained by applying the LAPACK Householder QR routines ({\tt dgeqrf} and {\tt dorgqr}) to a random matrix. 
The code is written in Fortran90 and uses LAPACK and BLAS routines.

Table~\ref{tab:timing_xeon} presents the computational time of several methods, where the QR factorization of matrices whose condition number is $10^{11}$ is computed. 
Here, we compare \shiftcholqr3 ({\tt dgemm} and {\tt dsyrk} versions), Householder QR, and CGS2 (including its blocked version). 
It is worth noting that {\tt dgeqr} is a novel LAPACK routine that appropriately uses the TSQR algorithm. 
The block width in Block CGS2 was empirically tuned. 
Table~\ref{tab:timing_xeon} clearly shows that \shiftcholqr3 (both {\tt dgemm} and {\tt dsyrk} versions) outperforms other methods for all cases. 
Among methods besides \shiftcholqr3, {\tt dgeqr} is fastest, probably due to employing TSQR, but \shiftcholqr3 ({\tt dsyrk}) 
is more than $1.7$ times faster in every case. 
These results also indicate that, even if four iterations is required for ill-conditioned problems, iterated CholeskyQR (shown in Algorithm~\ref{alg12}) would still be faster than other methods. 

It is of interest to compare \shiftcholqr3 with mixedCholQR~\cite{yamazaki2015mixed}  from the viewpoint of computational time, but implementing and highly tuning double-double {\tt gemm} or {\tt syrk} routines (whose input matrices are in double precision) is generally difficult and requires significant effort.  
We thus estimate the computational cost of mixedCholQR routine; 
we only discuss the case where {\tt gemm} is used, but almost the same discussion is applicable when we use {\tt syrk}. 
Table~\ref{tab:blas_flops} presents the results of the benchmark for {\tt dgemm}, 
which computes $X^\top X$, where $X$ is an $m \times n$ matrix. 
From this table, we can assume that $700$ GFLOPS is a rough upper bound of the achieved performance. 
According to the paper on mixed precision CholeskyQR~\cite{yamazaki2015mixed}, the number of double precision operations required in {\tt ddgemm} (a double-double precision {\tt gemm} routine) is $8$ times (Cray-style) or $12.5$ times (IEEE-style) that required in {\tt dgemm}. 
Therefore, assuming that double precision operations in {\tt ddgemm} are performed at $700$ GFLOPS, we can estimate the time (sec.) of {\tt ddgemm} as 
$\frac{8\cdot2}{7}\cdot mn^2\cdot10^{-11}$ (Cray-style) or $\frac{12.5\cdot2}{7}\cdot mn^2\cdot10^{-11}$ (IEEE-style). 

Based on the above estimation and the breakdown of timing results of \shiftcholqr3, 
we compare \shiftcholqr3 and mixed precision CholeskyQR in Table~\ref{tab:time_mixed_chol_qr_n64}.
Here, we ignore the increasing cost for {\tt ddpotrf} because it is small relative to the total time. 
From the table, we can expect that \shiftcholqr3 is faster than mixed precision CholeskyQR in this computational environment. 
Considering this estimation and the fact that a well-tuned {\tt ddgemm} is currently rarely available, \shiftcholqr3 seems to be more practical than mixed precision CholeskyQR. 

Finally we briefly mention the performance of \shiftcholqr3 on large-scale distributed parallel systems. 
Based on our previous performance evaluation of CholeskyQR2 on the K computer (for details, see \cite{fukaya2014choleskyqr2}), we give a rough estimation of the computational time of \shiftcholqr3 in Table~\ref{tab:estimation_on_K}, where we estimate the computational time of \shiftcholqr3 as $1.5$ times that of CholeskyQR2, simply based on the number of iterations. 
From this table, \shiftcholqr3 is expected to be still significantly faster than Householder QR methods (both TSQR and ScaLAPACK routines) in large-scale parallel computation for matrices $\kappa_2(X)<\uu^{-1}$. 

\begin{table}[t]
\begin{center}
\caption{Specifications of the Laurel~2 system.}
\begin{tabular}{c|c}
\hline
Item & Specification\\
\hline \hline
CPU & Intel Xeon E5-2695 v4 (Broadwell, 2.1 GHz, 18 cores) \\
Number of CPUs / node & 2 \\
Memory size / node & 128 GB \\
Peak FLOPS / node & 1.21 TFLOPS (in double precision) \\
Compiler & Intel {\tt ifort} ver.~17.0.6 \\
Compile options & {\tt -mcmodel=medium}, {\tt -shared-intel}, {\tt -qopenmp}\\
	& {\tt -O3}, {\tt -ipo}, {\tt -xHost}\\
BLAS, LAPACK & Intel MKL ver.~2017.0.6 ({\tt -mkl=parallel})\\
\hline
\end{tabular}
\label{tab:spec_laurel2}
\end{center}
\end{table}

\begin{table}[t]
\begin{center}
\caption{Computational time on Laurel~2: $\kappa_2(X)=10^{11}$, $m=100,000$ and the number of threads is $36$.}
\begin{tabular}{c|cccc}
\hline
& \multicolumn{4}{c}{Time (sec.)}\\
\hline
Method & $n=32$ & $n=64$ & $n=128$ & $n=256$\\
\hline \hline
sCholQR3 ({\tt dgemm} ver.)    & $2.62 \times 10^{-3}$ & $9.16 \times 10^{-3}$ & $3.32 \times 10^{-2}$ & $1.16 \times 10^{-1}$\\
sCholQR3 ({\tt dsyrk} ver.)    & $2.39 \times 10^{-3}$ & $7.44 \times 10^{-3}$ & $2.45 \times 10^{-2}$ & $8.20 \times 10^{-2}$\\
{\tt dgeqrf} $+$ {\tt dorgqr} & $6.31 \times 10^{-2}$ & $8.94 \times 10^{-2}$ & $1.19 \times 10^{-1}$ & $1.96 \times 10^{-1}$ \\
{\tt dgeqr} $+$ {\tt dgemqr}  & $4.39 \times 10^{-3}$ & $1.30 \times 10^{-2}$ & $4.39 \times 10^{-2}$ & $1.68 \times 10^{-1}$ \\
CGS2                          & $9.60 \times 10^{-3}$ & $2.31 \times 10^{-2}$ & $1.40 \times 10^{-1}$ & $1.24$\\
Block CGS2                    & $9.62 \times 10^{-3}$ & $2.49 \times 10^{-2}$ & $7.88 \times 10^{-2}$ & $2.05 \times 10^{-1}$\\
\hline
\end{tabular}
\label{tab:timing_xeon}
\end{center}
\end{table}

\begin{table}[t]
\begin{center}
\caption{Achieved performance of {\tt dgemm}: $m = 100,000$ and the number of threads is $36$.}
\begin{tabular}{c|cccccccc}
\hline
$n$    & $32$  & $64$  & $128$ & $256$ & $512$ & $1,024$ & $4,096$ & $16,384$\\
\hline
GFLOPS & $476$ & $559$ & $539$ & $589$ & $600$ & $624$   & $654$   & $628$   \\
\hline
\end{tabular}
\label{tab:blas_flops}
\end{center}
\end{table}

\begin{table}[t]
\begin{center}
\caption{Comparison of \shiftcholqr3 with Mixed Precision CholeskyQR based on the performance estimation for {\tt ddgemm}: $m=100,000$ and $n=64$.}
\begin{tabular}{c|cc|ccc}
\hline
& \multicolumn{2}{c|}{\shiftcholqr3} & \multicolumn{3}{c}{Mixed Precision CholeskyQR}\\
\hline
&         &            &         & Cray-style & IEEE-style \\
& Routine & Time (sec.) & Routine & Time (sec.) & Time (sec.) \\
\hline \hline
sCholQR & {\tt dgemm}  & $1.90\times10^{-3}$ & -- & -- & --  \\
        & {\tt dpotrf} & $3.00\times10^{-5}$ & -- & -- & --  \\
        & {\tt dtrsm}  & $1.11\times10^{-3}$ & -- & -- & --  \\
\hline
CholQR  & {\tt dgemm}  & $1.68\times10^{-3}$ & {\tt ddgemm}  & $\ge 9.36\times10^{-3}$ & $\ge 1.46\times10^{-2}$\\
        & {\tt dpotrf} & $2.91\times10^{-5}$ & {\tt ddpotrf} & $\ge 2.91\times10^{-5}$ & $\ge 2.91\times10^{-5}$\\
        & {\tt dtrsm}  & $1.31\times10^{-3}$ & {\tt dtrsm}   & $1.31\times10^{-3}$ & $1.31\times10^{-3}$\\
        & {\tt dtrmm}  & $3.10\times10^{-5}$ & -- & -- &-- \\
\hline
CholQR  & {\tt dgemm}  & $1.81\times10^{-3}$ & {\tt dgemm}   & $1.81\times10^{-3}$ & $1.81\times10^{-3}$\\
        & {\tt dpotrf} & $3.79\times10^{-5}$ & {\tt dpotrf}  & $3.79\times10^{-5}$ & $3.79\times10^{-5}$\\
        & {\tt dtrsm}  & $1.24\times10^{-3}$ & {\tt dtrsm}   & $1.24\times10^{-3}$ & $1.24\times10^{-3}$\\
        & {\tt dtrmm}  & $2.91\times10^{-5}$ & {\tt dtrmm}   & $2.91\times10^{-5}$ & $2.91\times10^{-5}$\\
\hline
Misc.   &              & $1.36\times10^{-4}$ &               & $\ge 0$ & $\ge 0$\\
\hline
Total   &              & $9.34\times10^{-3}$ &               & $\ge 1.38\times10^{-2}$ & $\ge  1.91\times10^{-2}$\\
\hline
\end{tabular}
\label{tab:time_mixed_chol_qr_n64}
\end{center}
\end{table}

\begin{table}[t]
\begin{center}
\caption{Estimation of the computational time of \shiftcholqr3 on the K computer: the number of nodes ($=$ MPI processes) is $16,384$.}
\begin{tabular}{cc|ccc|c}
\hline
    &     & \multicolumn{4}{c}{Time (sec.)} \\
\hline
    &     & \multicolumn{3}{c|}{Measured} & Estimated \\
$m$ & $n$ & TSQR & {\tt pdgeqrf} $+$ {\tt pdorgqr} & CholQR2 & sCholQR3 \\
\hline \hline
4,194,304 & 16  & $1.64\times10^{-3}$ & $1.04\times10^{-2}$ & $8.02\times10^{-4}$ & $1.20\times10^{-3}$ \\
          & 64  & $7.42\times10^{-3}$ & $4.14\times10^{-2}$ & $2.52\times10^{-3}$ & $3.79\times10^{-3}$ \\
          & 256 & $2.32\times10^{-1}$ & $1.84\times10^{-1}$ & $3.05\times10^{-2}$ & $4.57\times10^{-2}$ \\
\hline
16,777,216& 16  & $1.84\times10^{-3}$ & $1.13\times10^{-2}$ & $9.06\times10^{-4}$ & $1.36\times10^{-3}$ \\
          & 64  & $8.82\times10^{-3}$ & $5.65\times10^{-2}$ & $3.13\times10^{-3}$ & $4.70\times10^{-3}$ \\
          & 256 & $2.40\times10^{-1}$ & $3.92\times10^{-1}$ & $3.38\times10^{-2}$ & $5.07\times10^{-2}$ \\
\hline
\end{tabular}
\label{tab:estimation_on_K}
\end{center}
\end{table}
\subsection{Oblique inner product}
In the case of the inner product defined by a positive definite matrix $B$, we 
 focus on large-sparse $B$, as arises commonly in applications. 
Owing to the sparsity of $B$, orthogonalizing $X$ such that $X^TBX = I$ depends heavily on the performance of sparse matrix-vector multiplication (SpMV). Both CGS2 and \shiftcholqr3 would therefore achieve only a fraction of the machine's peak performance compared with the case when $B$ is dense, where performance would be dictated by {\tt dgemm}. 
Optimizing the performance of \shiftcholqr3 for a sparse $B$ involves extracting performance from the computation of the inner product $X^TBX$, which in turn depends on
\begin{itemize}
	\item sparse matrix multiple-vector multiplication kernels \cite{ka13}, and
	\item the choice of whether $X^TBX$ is computed by forming $Y:= BX$ explicitly followed by computing $X^TY$ or in blocks. 
\end{itemize} 
By blocking we refer to forming the matrix $Y_{j}:= BX(:,jk:(j+1) k)$ for some block size $k \in [1,n]$ and $j \in (1,n/k)$ in succession, and subsequently calculating the matrix $X^TY_j$.
The choice of strategy is highly sensitive to both the sparsity structure of $B$, and the cache hierarchy of the architecture on which we execute. 
For an unbounded cache size, computing $Y$ explicitly will be the fastest strategy since the subsequent operation $Y^TX$ has excellent computational intensity.
Realistically however, for a small L1 cache or indeed a large $B$, forming $Y$ will cause the entries of $X$ to be evicted from cache, leading to unnecessary cache misses and a slower performance.
For a more detailed discussion and performance analysis of the different strategies, we refer the reader to \cite[sec. 6.4]{kannanthesis}; in this section we present the overall speed-up obtained over CGS2 implementations.

Our experiments for a sparse $B$ were carried out on a shared memory system with Intel Xeon E5-2670 (Sandy Bridge) symmetric multiprocessors.
There are 2 processors with 8 cores per processor, each with 2 hyperthreads that share a large 20 MB L3 cache.
Each core also has access to 32 KB of L1 cache and 256 KB of L2 cache and runs at 2.6 GHz.
They support the AVX instruction set with 256-bit wide SIMD registers, resulting in 10.4 GFlop/s of double precision performance per core or 83.2 GFlop/s per processor and a {\tt dgemm} performance of 59 GFlop/sec.
The implementation was parallelized using OpenMP, and compiled using Intel C++ Compiler version 14.0  with {\tt -O3} optimization level, with autovectorization turned on for both CPU. 
Tests are run by scheduling all threads on a single processor with 16 threads with OpenMP `compact' thread affinity, which is set using {\tt KMP\_SET\_AFFINITY}.

We use as test problems symmetric positive definite matrices from the University of Florida \cite{Davis2011} collection, as listed in Table \ref{tab-test-matrices}. 
The matrices are stored and operated upon $B$ using the Compressed Sparse Row (CSR) format and we avoid changing the format to favour performance although the results in \cite{ka13} strongly suggest that this is beneficial. 
The reason for this is that oblique QR factorization is usually part of a ``larger'' program, for example, a sparse generalized eigensolver \cite[chap. 4]{kannanthesis}, and hence the storage format needs may be governed by other operations in the parent algorithm, for example, a sparse direct solution, and such software may not exist for the new format.
Changing the sparse matrix format to accelerate the factorization may also slow down other parts of the calling program that are not optimized to work with a different format.

\begin{table}[]
	\caption{Test matrices used for benchmarking {\tt chol\_borth}.}
	\label{tab-test-matrices}
	\centering
	\begin{tabular}{p{2.2cm}|p{3cm}|c|c|c}
		Name & Application & Size & Nonzeros & Nonzeros/row \\
		\hline
		apache2 & finite difference & 715,176 & 2,766,523 &  3.86\\
		bairport &  finite element & 67,537 & 774,378 &  11.46\\
		bone010 & model reduction & 986,703 & 47,851,783 & 48.50 \\
		G3\_circuit & circuit simulation & 1,585,478 & 7,660,826 & 4.83 \\
		Geo\_1438 & finite element & 1,437,960 & 60,236,322 & 41.89 \\
		parabolic\_fem &  CFD & 525,825 & 2,100,225 & 3.99 \\
		serena & finite element & 1,391,349 &	64,131,971 	& 46.09 \\
		shipsec8 & finite element & 114,919 & 3,303,553 & 28.74  \\
		watercube & finite element & 68,598	& 1,439,940 & 20.99 \\
	\end{tabular}
\end{table}

We present the performance of \shiftcholqr3 on CPU by varying $n$, i.e., the size of $X$ and compare the results with those from CGS2 in Figure \ref{fig-chol-vs-cgs2}.
On the CPU, \shiftcholqr3 was faster than CGS2 by a minimum of 3.7 times for $n = 16$ and a maximum of 40 times for $n = 256$ vectors. 
The large speedup obtained is only a reflection of the reliance of CGS2 on matrix-vector products, which in the case of sparse matrices, has a particularly poor CPU utilization.

\begin{figure} 
\centering{
	\includegraphics[width=.70\textwidth]{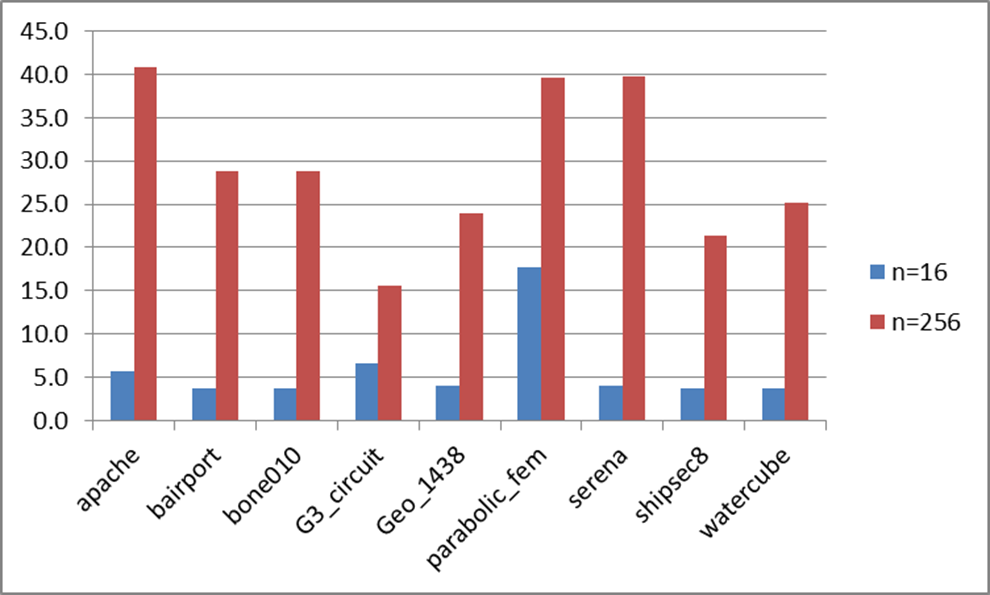}}
	\caption{Performance speed-ups of \shiftcholqr3 over CGS2 for sparse problems for varying sizes of $X$.}
	\label{fig-chol-vs-cgs2}
\end{figure}

\section{Conclusion and discussion}
Our algorithm \shiftcholqr3 combines speed, stability,  and versatility (applicable to $B\neq I$). We believe it offers an attractive alternative in high-performance computing to the conventional Householder-based QR factorziation algorithms when $B=I$, and can be the clear algorithm of choice when $B\neq I$. 

 \shiftcholqr3 as presented could benefit from further tuning, and this work suggests a few future directions. First, the choice of shift $s$ introduced in this paper is conservative, and severely so when $m,n$ are large. Our experiments suggest that a much smaller shift, such as $s=O(\uu\|A\|_2)$, is usually sufficient to avoid breakdown in $\mbox{chol}(A+s I )$, and as illustrated in Figure~\ref{shift_test}, a smaller shift results in improved conditioning, and hence smaller number of \shiftcholqr\ iterations. Introduction of a shift strategy that is both stable and efficient is an important remaining task. 

Our performance results hold promise for the competitiveness of \shiftcholqr3, and further work will focus on comparing it with other state-of-the-art implementations for QR factorziations such as TSQR in an HPC setting. 
In the case where $B \neq I$ for a sparse $B$, the performance benefits over CGS2 are remarkable and our algorithm is the clear choice for applications. 

Finally, for rank-deficient matrices, \shiftcholqr3 is inapplicable, because $AR$ is rank-deficient for any $R$\footnote{However, roundoff errors often map the zero singular values to $O(\uu)$, and so a few iterations of \shiftcholqr\ usually result in  $\kappa_2(Q)\leq \uu^{-1/2}$.}. This issue is not present in Householder-type methods, and a workaround for \shiftcholqr3 is much desired. 

\appendix 
\section{A sharper bound on the residual of CholeskyQR2}
In \cite{yamamotoetna2015}, the residual of CholeskyQR2 is bounded by $\|\hat{Z}\hat{U}-X\|_F \le 5n^2\sqrt{n}{\bf u}\|X\|_2$. In this appendix, we derive a sharper bound $5n^2{\bf u}\|X\|_2$.

In the CholeskyQR2 algorithm in floating-point arithmetic, the QR decomposition $X=ZU$ is computed as follows.
\begin{eqnarray}
&& \hat{A}=fl(X^{\top}X), \quad \hat{R}=fl({\rm chol}(\hat{A})), \quad \hat{Y}=fl(X\hat{R}^{-1}), \\
&& \hat{C}=fl(\hat{Y}^{\top}\hat{Y}), \quad \hat{S}=fl({\rm chol}(\hat{C})), \quad \hat{Z}=fl(\hat{Y}\hat{S}^{-1}), \quad \hat{U}=fl(\hat{S}\hat{R}).
\end{eqnarray}
Let us denote the residuals in the first and the second step by $\Delta X$ and $\Delta Y$, respectively, and the forward error in the computation of $\hat{U}$ by $\Delta U$. Then,
\begin{eqnarray}
X+\Delta X &=& \hat{Y}\hat{R}, \\
\hat{Y}+\Delta\hat{Y} &=& \hat{Z}\hat{S}, \\
\hat{U} &=& \hat{S}\hat{R}+\Delta U.
\end{eqnarray}
Using these quantities, the residual of CholeskyQR2 can be evaluated as
\begin{eqnarray}
\|\hat{Z}\hat{U}-X\|_F &=& \|\hat{Z}(\hat{S}\hat{R}+\Delta U)-\hat{Y}\hat{R}+\Delta X\|_F \nonumber \\
&=& \|\Delta\hat{Y}\hat{R}+\hat{Z}\Delta U+\Delta X\|_F \nonumber \\
&\le& \|\Delta\hat{Y}\|_F\|\hat{R}\|_2+\|\hat{Z}\|_2\|\Delta U\|_F+\|\Delta X\|_F.
\label{eq:residual_CQR2}
\end{eqnarray}

In \cite{yamamotoetna2015} the residual was bounded row-wise, then summed to bound $\|\hat{Z}\hat{U}-X\|_F$. The analysis below improves the bound by a factor $\sqrt{n}$ by directly bounding the matrix norm and using the refined analysis employed in Section~\ref{sec:stab}.
Now we bound each term in (\ref{eq:residual_CQR2}). In \cite{yamamotoetna2015}, $\|\hat{R}\|_2$ and $\|\Delta U\|_F$ (which is denoted as $\|E_5\|_F$ in \cite{yamamotoetna2015}) are evaluated as
\begin{eqnarray}
\|\hat{R}\|_2 &\le& 1.1\|X\|_2, \\
\|\Delta U\|_F &\le& 1.2n^2{\bf u}\|X\|_2.
\end{eqnarray}
$\|\hat{Z}\|_2$ can be bounded as in Eq.~(\ref{eq:hatZnorm}) of this paper. To bound $\|\Delta X\|_F$, we recall that the $i$th row of $\hat{Y}$, which we denote by $\hat{\bf y}_i^{\top}$, is computed from the $i$th row of $X$, which we denote by ${\bf x}_i^{\top}$, by triangular solution and therefore it holds that
\begin{equation}
\hat{\bf y}_i^{\top} = {\bf x}_i^{\top}(\hat{R}+\Delta\hat{R}_i)^{-1} \quad (i=1, 2, \ldots, m),
\end{equation}
where $\Delta \hat{R}_i$ is the backward error of the triangular solution. According to \cite{yamamotoetna2015}, $\|\Delta \hat{R}_i\|_2$ is bounded as
\begin{equation}
\|\Delta \hat{R}_i\|_2 \le 1.2n\sqrt{n}{\bf u}\|X\|_2.
\end{equation}
Hence, by denoting the $i$th row of $\Delta X$ by $\Delta{\bf x}_i^{\top}$ and noting the relationship $\Delta{\bf x}_i^{\top}=-\hat{\bf y}_i^{\top}\Delta\hat{R}_i$, we obtain
\begin{equation}
\|\Delta X\|_F
= \sqrt{\sum_{i=1}^m\|\hat{\bf y}_i^{\top}\Delta\hat{R}_i\|^2}
= \sqrt{\sum_{i=1}^m\|\hat{\bf y}_i^{\top}\|^2}\max_{1\le i\le m}\|\Delta\hat{R}_i\|_2
\le \|\hat{Y}\|_F\cdot 1.2n\sqrt{n}{\bf u}\|X\|_2.
\label{eq:DeltaXF}
\end{equation}
Since the singular values of $\hat{Y}$ are bounded by $\frac{\sqrt{69}}{8}$ (see the proof of Corollary 3.2 in \cite{yamamotoetna2015}), $\|\hat{Y}\|_F \le \sqrt{n}\|\hat{Y}\|_2 \le \frac{\sqrt{69}}{8}\sqrt{n}$. Inserting this into (\ref{eq:DeltaXF}), we have
\begin{equation}
\|\Delta X\|_F \le 1.3n^2{\bf u}\|X\|_2.
\end{equation}
A bound on $\|\Delta Y\|_F$ can be obtained by replacing $X$ and $\hat{Y}$ in (\ref{eq:DeltaXF}) with $\hat{Y}$ and $\hat{Z}$, respectively, and noting that $\|\hat{Y}\|_2 \le \frac{\sqrt{69}}{8}$ and $\|\hat{Z}\|_F \le \sqrt{n}\sqrt{1+6(mn{\bf u}+n(n+1){\bf u})} \le 1.1\sqrt{n}$ (see Theorem 3.3 in \cite{yamamotoetna2015}). The result is
\begin{equation}
\|\Delta\hat{Y}\|_F \le 1.4n^2{\bf u}.
\end{equation}
Putting all these together and inserting into (\ref{eq:residual_CQR2}), we finally have
\begin{eqnarray}
\|\hat{Z}\hat{U}-X\|_F &\le& 1.4n^2{\bf u}\cdot 1.1\|X\|_2 + \frac{\sqrt{76}}{8}\cdot 1.2n^2{\bf u}\|X\|_2 + 1.3n^2{\bf u}\|X\|_2 \nonumber \\
&\le& 5n^2{\bf u}\|X\|_2.
\end{eqnarray}

\def\noopsort#1{}\def\l{\char32l}\def\v#1{{\accent20 #1}}
  \let\^^_=\v\def\hbk{hardback}\def\pbk{paperback}

\end{document}